\numberwithin{equation}{section}
\theoremstyle{plain}
\newtheorem{theorem}{Theorem}[section]
\newtheorem{proposition}[theorem]{Proposition}
\newtheorem{lemma}[theorem]{Lemma}
\theoremstyle{definition}
\newenvironment{remark}{\pushQED{\qed}\remarkbase}{\popQED\endremarkbase}
\newcommand{\mA}{\mathcal{A}}
\newcommand{\mB}{\mathcal{B}}
\newcommand{\mD}{\mathcal{D}}
\newcommand{\mF}{\mathcal{F}}
\newcommand{\mG}{\mathcal{G}}
\newcommand{\mH}{\mathcal{H}}
\newcommand{\mI}{\mathcal{I}}
\newcommand{\mK}{\mathcal{K}}
\newcommand{\mL}{\mathcal{L}}
\newcommand{\mM}{\mathcal{M}}
\newcommand{\mN}{\mathcal{N}}
\newcommand{\mV}{\mathcal{V}}
\newcommand{\mW}{\mathcal{W}}
\newcommand{\mR}{\mathcal{R}}
\newcommand{\mU}{\mathcal{U}}
\renewcommand{\a}{\alpha}
\renewcommand{\b}{\beta}
\newcommand{\g}{\gamma}
\renewcommand{\d}{\delta}
\newcommand{\e}{\varepsilon}
\newcommand{\ph}{\varphi}
\newcommand{\lm}{\lambda}
\newcommand{\Om}{\Omega}
\newcommand{\om}{\omega}
\newcommand{\p}{\pi}
\newcommand{\s}{\sigma}
\renewcommand{\t}{\tau}
\renewcommand{\t}{\tau }
\renewcommand{\Re}{\mathrm{Re}\,}
\renewcommand{\Im}{\mathrm{Im}\,}
\newcommand{\gr}{\nabla}
\newcommand{\intp}{\int_{0}^{2\p}}
\newcommand{\la}{\langle}
\newcommand{\ra}{\rangle}
\newcommand{\R}{\mathbb R}
\newcommand{\C}{\mathbb C}
\newcommand{\Z}{\mathbb Z}
\newcommand{\N}{\mathbb N}
\newcommand{\T}{\mathbb T}
\newcommand{\sign}{{\rm sign}}
\newcommand{\sgn}{{\rm sign}}
\newcommand{\inv}{^{-1}}
\newcommand{\zero}{^{(0)}}
\newcommand{\uno}{^{(1)}}
\newcommand{\due}{^{(2)}}
\newcommand{\tre}{^{(3)}}
\renewcommand{\k}{^{(k)}}
\newcommand{\Ph}{\Phi}
\newcommand{\pepe}{\Pi_E^\perp} 
\newcommand{\PP}{\mathbb{P}} 
\newcommand{\pp}{\mathbb{P}}
\newcommand{\linf}{{L^{\infty}}}
\newcommand{\pa}{\partial}
\newcommand{\pat}{\partial_{t}} 
\newcommand{\pax}{\partial_{x}}
\newcommand{\patau}{\partial_{\t}}
\newcommand{\para}{\bar{a}}
\newcommand{\parb}{\bar{b}}
\newcommand{\lmcioeparb}{\parb}
\begin{document}

\title{\Large{\textbf{Periodic solutions of fully nonlinear autonomous equations of Benjamin-Ono type}}}

\date{}
\author{\large{Pietro Baldi}}

\maketitle

\begin{minipage}[c]{140mm}
	
\begin{small} 
\textbf{Abstract.} 
We prove the existence of time-periodic, small amplitude solutions of autonomous quasilinear or fully nonlinear completely resonant pseudo-PDEs of Benjamin-Ono type in Sobolev class. The result holds for frequencies in a Cantor set that has asymptotically full measure as the amplitude goes to zero.

At the first order of amplitude, the solutions are the superposition of an arbitrarily large number of waves that travel with different velocities (multimodal solutions).

The equation can be considered as a Hamiltonian, reversible system plus a non-Hamiltonian (but still reversible) perturbation that contains derivatives of the highest order. 

The main difficulties of the problem are: an infinite-dimensional bifurcation equation, and small divisors in the linearized operator, where also the highest order derivatives have nonconstant coefficients.

The main technical step of the proof is the reduction of the linearized operator to constant coefficients up to a regularizing rest, by means of changes of variables and conjugation with simple linear pseudo-differential operators, in the spirit of the method of Iooss, Plotnikov and Toland for standing water waves (ARMA 2005). 
Other ingredients are a suitable Nash-Moser iteration in Sobolev spaces, and Lyapunov-Schmidt decomposition. 

\end{small} 
\end{minipage}

\bigskip

\emph{Keywords:} Benjamin-Ono equation, fully nonlinear PDEs, 
quasi-linear PDEs, pseudo-PDEs, periodic solutions, 
small divisors, Nash-Moser method, 
infinite dimensional dynamical systems, 
reversible dynamical systems.

\emph{2000MSC:} 
35B10, 37K55, 37K50.

\section{\large{The problem and main result}}

We consider autonomous 
equations of Benjamin-Ono type 
\begin{equation} \label{eq:main} 
u_t + \mH u_{xx} + \pa_x(u^3) + \mN_4(u) = 0
\end{equation}
with periodic boundary conditions $x \in \T := \R/2\p\Z$, 
where the unknown $u(t,x)$ is a real-valued function, $t \in \R$, 
$\mH$ is the periodic Hilbert transform, namely the Fourier multiplier
\[
\mH e^{ijx} = -i \, \sign(j) \, e^{ijx}, \quad j \in \Z,
\]
and $\mN_4$ is of type (I) or (II),
\begin{align} \label{g1 g2}
\text{(I)} & \quad \mN_4(u) = g_1(x,u,\mH u, u_x) 
+ \partial_x ( g_2(x,u,\mH u_x) ), 
\\
\label{g0}
\text{(II)} & \quad \mN_4(u) = g_0(x,u,\mH u, u_x, \mH u_{xx}).
\end{align}
\eqref{eq:main} is a \emph{quasilinear} problem in case (I) and a \emph{fully nonlinear} problem in case (II). 

We assume that the function $g_i(x,y)$ is defined for $y=(y_1,\ldots,y_n)$ in the ball $B_1 = \{ |y|<1 \}$ of $\R^n$, $n=2,3,4$, $g_i$ is $2\p$-periodic in the real variable $x$, and, together with its derivatives in $y$ up to order 4, it is of class $C^r$ in all its arguments $(x,y)$, with 
\begin{equation}  \label{Kgr}
\sum_{0 \leq |\a| \leq 4} \| \pa_y^\a g_i \|_{C^r(\T \times B_1)} \leq K_{g,r}, 
\end{equation}
for some constant $K_{g,r} > 0$. 
Moreover we assume that at $y=0$ 
\begin{equation}  \label{g order 4}
\pa_y^\a g_i(x,0) = 0 \quad \forall \a \in \N^n, \ |\a| \leq 3, 
\end{equation}
so that, regarding the amplitude, $\mN_4(\e u) = O(\e^4)$ as $\e \to 0$.

We assume that the nonlinearity $\mN(u) := \pa_x(u^3) + \mN_4(u)$ behaves like the linear part $\pa_t + \mH \pa_{xx}$ with respect to the parity of functions $u(t,x)$ in the time-space pair $(t,x)$. 
This means to assume the \emph{reversibility conditions} 
\begin{gather} \label{parity mN}
g_1(-x,y_1,-y_2,-y_3) = \, - g_1(x,y_1,y_2,y_3), 
\qquad 
g_2(-x,y_1,y_2) = \, g_2(x,y_1,y_2),  
\\
\label{parity mN II}
g_0(-x,y_1,-y_2,-y_3,-y_4) = \, - g_0(x,y_1,y_2,y_3,y_4),
\end{gather}
so that in both cases (I) and (II) $\mN(u)$ is odd for all even $u$, namely 
\begin{equation}  \label{mN(u) rev}
u(-t,-x) = u(t,x) \quad \Rightarrow \quad 
\mN(u)(-t,-x) = -\mN(u)(t,x). 
\end{equation}
Assumptions \eqref{g1 g2}, \eqref{g0}, \eqref{parity mN}, \eqref{parity mN II} are discussed in Section \ref{sec:comments}.

\begin{remark} Examples of such nonlinearities are:  
\[ 
\text{(I)} \quad 
\mN_4(u) = (\mH u_x)^3 \mH u_{xx} + a(x) u_x^4 + u u_x^3 + b(x) u_x^5,
\qquad  
\text{(II)} \quad 
\mN_4(u) = a(x) (\mH u_{xx})^4 + u_{x}^5,
\]
where $a(x)$ is odd and $b(x)$ is even. 
\end{remark}

We construct small amplitude time-periodic solutions $u(t,x)$ of period $T = 2\p/\om$, $\om>0$, 
where the period $T$ is also an unknown of the problem. 
Rescaling the time $t \to \om t$, this is equivalent to find $2\p$-periodic solutions of the equation 
\begin{equation} \label{eq:main 2} 
\om u_t + \mH u_{xx} + \pa_x(u^3) + \mN_4(u) = 0,
\end{equation}
with $u : \T^2 \to \R$, $\om > 0$.

Regarding the time-space pair $(t,x)$ as a point of the 2-dimensional torus $\T^2$, we consider the $L^2$-based Sobolev space of real-valued periodic functions 
\begin{equation} \label{norm two bars}
H^s = H^s(\T^2;\R) = \Big\{ u = \sum_{k \in \Z^2} u_k \, e_k \ : \ 
u_{-k} = \bar u_{k} \in \C, \ \ 
\| u \|_s^2 := \sum_{k \in \Z^2}  |u_{k}|^2 \langle k \rangle^{2s} 
< \infty \Big\} , 
\end{equation}
where $s\geq 0$, $\la k \ra := \max\{ 1, |k| \}$, and $e_k(t,x) := e^{i(k_1 t + k_2 x)}$.  

The main result of the paper is the following theorem.

\begin{theorem}  \label{thm:main} 
There exist universal constants $r_0, s_0, c_0 \in \N$ with the following properties. 

Assume hypotheses 
\eqref{g1 g2}, \ldots, \eqref{parity mN II}
on the nonlinearity $\mN$, with $r \geq r_0$. 
Let $m \geq 2$ and let $0 < k_1 < k_2 < \ldots < k_m$ be $m$ positive integers that satisfy 
\begin{equation} \label{bif e non-deg nel teorema} 
k_1 + \ldots + k_{m-1} > k_m (m-3/2), 
\qquad
k_1 + \ldots + k_m  \neq (m-1/2)j \quad \forall j \in \N.
\end{equation}
Then there exist $(i)$ a trigonometric polynomial 
\[
\bar v_1(t,x) := \sum_{j=1}^m a_j \cos(k_j \, x - k_j^2 \, t),
\]
even in the pair $(t,x)$, where $a_j \in \R$, 
\[
a_j^2  = \frac{4}{m-1/2} \, \Big( \sum_{i=1}^m k_i \Big) - 4 k_j,
\qquad j=1,\ldots,m;
\]

$(ii)$ constants $C, \e^*_0 >0$ that depend on $k_1,\ldots,k_m, K_{g,r_0}$;

$(iii)$ a measurable Cantor-like set $\mG \subset (0,\e^*_0)$ of asymptotically full Lebesgue measure, namely 
\[
\frac{| \mG \cap (0,\e_0)|}{\e_0}  \geq 1 - \e_0 C
\qquad \forall \e_0 \leq \e^*_0,
\]
such that for every $\e \in \mG$ problem \eqref{eq:main 2} with frequency 
\[
\om = 1 + 3 \e^2
\] 
has a solution $u_\e \in H^{s_0}(\T^2,\R)$ that satisfies 
\[
\| u_\e - \e \bar v_1 \|_{s_0} \leq \e^2 C, \quad 
u_\e(-t,-x) = u_\e(t,x), \quad 
\int_{\T^2} u_\e(t,x) \, dt \, dx = 0.
\]
Moreover $u_\e \in H^s(\T^2)$ for every $s$ in the interval $s_0 \leq s < (r+c_0)/2$.  

If $g_i$, $i=0,1,2$ in \eqref{g1 g2},\eqref{g0} is of class $C^\infty$, then also $u_\e \in C^\infty(\T^2)$. 
\end{theorem}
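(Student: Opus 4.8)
The plan is to set up the problem as a bifurcation from the resonant subspace of the linearized operator at $u=0$, then correct the bifurcating solution by a Nash--Moser iteration. First I would perform a Lyapunov--Schmidt decomposition adapted to the scaling $u = \e v$, $\om = 1 + 3\e^2$: split $H^s$ into the kernel $V$ of the operator $L_0 := \pa_t + \mH\pa_{xx}$ restricted to functions with zero mean, and its orthogonal complement $W$. Since the problem is completely resonant, $V$ is infinite-dimensional, but the reversibility constraint $u(-t,-x)=u(t,x)$ cuts it down; on this subspace the equation $L_0 v = 0$ forces $v$ to be supported on frequencies $(k_1 t + k_2 x)$ with $k_1 = -k_2^2$, i.e.\ superpositions of the travelling waves $\cos(k_j x - k_j^2 t)$. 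The $W$-component equation (the range equation) can be solved for $w = w(\e, v_1)$ of size $O(\e)$ relative to $v_1$ by a Nash--Moser scheme once we control the linearized operator; the $V$-component equation (the bifurcation equation) at leading order in $\e$ becomes a finite system of algebraic equations for the amplitudes $a_j$ — this is precisely where the cubic nonlinearity $\pa_x(u^3)$ enters, producing, after projecting $\pa_x(v_1^3)$ back onto each resonant mode, the relations $a_j^2 = \frac{4}{m-1/2}\sum_i k_i - 4k_j$. The first inequality in \eqref{bif e non-deg nel teorema} is exactly the condition that all these right-hand sides be positive (so real $a_j$ exist), and the non-degeneracy of the Jacobian of this algebraic system at $\bar v_1$ is what lets the bifurcation equation be solved for $v_1 = v_1(\e)$ near $\bar v_1$; the second condition in \eqref{bif e non-deg nel teorema} ($\sum k_i \neq (m-1/2)j$) should ensure $\om = 1 + 3\e^2$ is genuinely off-resonance, i.e.\ that the relevant diagonal terms of the linearized operator do not vanish.

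The technical heart, and the main obstacle, is inverting the linearized operator $\mL(\e, v) = \om\pa_t + \mH\pa_{xx} + 3\pa_x(u^2 \cdot) + d\mN_4(u)[\cdot]$ with tame estimates on a large measure set of parameters $\e$. Because the problem is quasilinear (case I) or fully nonlinear (case II), $d\mN_4(u)$ contains derivatives up to the top order with $x$- and $t$-dependent coefficients, so one cannot treat it perturbatively. Following the Iooss--Plotnikov--Toland strategy, I would conjugate $\mL$ by a sequence of explicit, bounded-and-boundedly-invertible transformations — a change of the space variable $x \mapsto x + \beta(t,x)$ to normalize the coefficient of the highest-order term, a reparametrization / multiplication operator to kill the next-order term, and conjugation with simple pseudodifferential operators of the form $\mathrm{Op}(1 + a(t,x)|\xi|^{-1})$ — to reduce $\mL$ to $\om\pa_t + \mH\pa_{xx} + (\text{constant-coefficient lower order}) + \mR$, where $\mR$ is a regularizing remainder (smoothing of arbitrarily high, but finite, order, which is why the Sobolev threshold $(r+c_0)/2$ appears). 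On the reduced operator the small divisors are $\om k_1 - i\,\mH\widehat{(\pa_{xx})}$-type expressions $|\om k_1 - \sign(k_2) k_2^2 + (\text{const})|$, and a standard Diophantine/measure estimate — Borel--Cantelli over the ``bad'' parameter values, using that $\om$ depends on $\e$ with nonzero derivative — gives invertibility with loss of derivatives for $\e$ outside a set of measure $O(\e_0^2)$, hence the Cantor set $\mG$ of asymptotically full measure.

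With the linearized inversion in hand, I would run the Nash--Moser iteration on the range equation: at each step use the approximately-inverted linearized operator together with smoothing projectors $\Pi_{N_n}$ to correct $w_n \mapsto w_{n+1}$, the quadratic convergence absorbing the derivative loss coming from $\mR$ and from the small divisors, exactly as in the standard scheme. One must be careful that all the transformations and estimates are compatible with the reversible structure, so that the solution produced satisfies $u_\e(-t,-x) = u_\e(t,x)$; since every conjugation map and every projector preserves the subspace of even functions, this is automatic. The zero-average condition $\int_{\T^2} u_\e = 0$ is maintained throughout because $\pa_t$, $\mH\pa_{xx}$ and $\pa_x$ all annihilate constants and the nonlinearity has no zero-mode contribution at the order considered. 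Finally, the statement that $u_\e \in H^s$ for all $s < (r+c_0)/2$, and $u_\e \in C^\infty$ when $g_i \in C^\infty$, follows by a bootstrap: once $u_\e$ is known to solve the equation in low norm, the reduction and the equation itself upgrade its regularity as far as the smoothness of the coefficients (hence of $g_i$) allows.

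The step I expect to be genuinely hard is the regularization of the linearized operator in the fully nonlinear case (II): there the coefficient of the top-order term $\mH\pa_{xx}$ (more precisely of $\mH u_{xx}$ inside $g_0$) is itself a nontrivial function of $(t,x)$ depending on $u$, so the very first change of variables must simultaneously handle a genuinely unbounded perturbation at the principal symbol level while keeping all conjugation maps tame and close to the identity of size $O(\e)$ — and every subsequent normalization step has to be redone with these variable top-order coefficients, tracking carefully how much regularity each pseudodifferential conjugation costs so that the final $\mR$ is smoothing of the order needed to close the Nash--Moser scheme.
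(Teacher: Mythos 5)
Your overall architecture — Lyapunov–Schmidt split around the kernel $V$ of $L_0=\pa_t+\mH\pa_{xx}$, solving the bifurcation amplitudes from $\pa_x(v^3)$, then reducing the linearized operator by a change of variables plus descent with pseudo-differential conjugations and feeding that into a Nash–Moser iteration with Diophantine excisions — matches the paper's strategy at each level. But the \emph{arrangement} of the Lyapunov–Schmidt step is where your plan runs into a concrete obstruction that the paper explicitly designs around. You propose the classical scheme: solve the range equation $\Pi_W F=0$ for $w(\e,v)$ by Nash–Moser, then solve the (infinite-dimensional) bifurcation equation $\Pi_V F(v+w(\e,v),\e)=0$ for $v$. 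This requires inverting $\Pi_W\mL\Pi_W$ with tame estimates. However, the very first step of your reduction of $\mL$ is conjugation by the torus diffeomorphism $\Psi$, and $\Psi$ does \emph{not} commute with the projections $\Pi_V,\Pi_W$. Worse, $\Psi-I$ is not a smoothing perturbation — it inherently costs one derivative (you can do no better than $h(x+p(x))-h(x)\approx h'(x)p(x)$) — so the commutator $[\Pi_W,\Psi]$ is neither negligible nor regularizing, and your ``reduction inside $W$'' does not close with tame constants. The paper sidesteps this by never doing Lyapunov–Schmidt on the nonlinear equation: it uses a formal expansion only to pick a good starting point $\e\bar v_1+\e^2\bar v_2$ and a frequency $\om=1+3\e^2$; then runs the Nash–Moser iteration on $F(u,\e)=0$ in the \emph{whole} space $X_0$, tracking the different sizes of the $V$- and $W$-components through the weight $P_\e=\e^2\Pi_V+\Pi_W$ in the definition $F=\e^{-2}P_\e^{-1}\mF$; and deploys the $V$/$W$ split only \emph{inside} the inversion of the transformed linearized operator $\tilde\mL_4=\mD+\mR$, where $\mD$ is diagonal (so it respects $\Pi_V,\Pi_W$ exactly) and $\mR$ is genuinely regularizing, so the cross-terms $\Pi_V\tilde\mL_4\Pi_W=\Pi_V\tilde\mR\Pi_W$ are small. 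This rearrangement is not cosmetic; it is what makes the change-of-variables step tame.

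A second, smaller misreading: you identify the bifurcation equation as a finite algebraic system for the $a_j$ and attribute the second hypothesis in \eqref{bif e non-deg nel teorema} to an ``off-resonance'' condition for $\om$. In fact the unperturbed bifurcation equation $3\pa_t v+\Pi_V\pa_x(v^3)=0$ and its linearization $3\pa_t h+\Pi_V\pa_x(3\bar v_1^2 h)=f$ live on all of $V_0$, which is infinite-dimensional. The $m\times m$ system only governs the $\Pi_\mK$-part of $h$; on $\Pi_\mK^\perp V_0$ the linearized operator is diagonal with eigenvalues $\propto\big(-|j|+\tfrac{k_1+\ldots+k_m}{m-1/2}\big)$, and the condition $k_1+\ldots+k_m\neq(m-1/2)j$ is precisely what keeps these eigenvalues away from zero (with the quantitative bound $|\Pi_C(\bar v_1^2)-|j||\geq\delta|j|$ that later reappears in the measure estimate). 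The Diophantine conditions controlling $\om$ are imposed separately, on the eigenvalues $\lambda_{l,j}(u_n,\e)$ of $\mD_n$ in $\mW$ at each Nash–Moser step, and it is the $\e$-derivative of those (not of $\om$ alone) that one uses to excise small sets. Finally, one further detail worth keeping in mind: the paper's change of variables also includes a change of the time variable $t\mapsto t+\a(t)$, not only $x\mapsto x+\b(t,x)$; both are needed to make the leading-order coefficients constant, and both preserve the special role of $t$ with respect to $\mH$, which is why a pure Fourier-integral-operator machinery is avoidable under the structural hypotheses (I)/(II).
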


\begin{remark} \label{rem:smallest example} 
$(i)$ 
The smallest example of $k_1,\ldots,k_m$ satisfying \eqref{bif e non-deg nel teorema} is $m=2$, $k_1 = 2$, $k_2 = 3$. 
For every $m \geq 2$ there exist infinitely many choices of integers $k_1 < \ldots < k_m$ that satisfy \eqref{bif e non-deg nel teorema}. See also Remark \ref{rem:packet}.
%

$(ii)$ $s_0$, $r_0$ and $c_0$ can be explicitly calculated: $s_0 = 22$, $c_0 = 28$ (non-sharp calculation); for $r_0$ see \eqref{parametri 800} and the lines below it.
%
\end{remark}

\section{\large{Motivations, questions and comments}}
\label{sec:comments}

The original Benjamin-Ono equation 
\begin{equation} \label{original Benj-Ono}
u_t + \mH u_{xx} + uu_x = 0
\end{equation} 
models one-dimensional internal waves in deep water \cite{Benj}, and is a completely integrable \cite{Ablowitz-Fokas} Hamiltonian partial pseudo-differential equation, 
\[
\pa_t u = J \gr H(u), \quad 
J = -\pa_x, \quad 
H(u) = \int \Big( \frac{u \mH u_x}{2} + \frac{u^3}{6} \Big) dx.
\]

The local and global well-posedness in Sobolev class 
for \eqref{original Benj-Ono} and many generalizations of it 
(other powers $u^p u_x$, other linear terms $\pa_x|D_x|^\a u$, $1 < \a < 2$, etc)
have been studied by several authors in the last  years: see for example 
Molinet, Saut \& Tzvetkov \cite{Molinet-Saut-Tzvetkov 2002},
Colliander, Kenig \& Staffilani \cite{Coll-K-Staff 2003}, 
Tao \cite{Tao 2004},
Kenig \& Ionescu \cite{Kenig-Ionescu 2005},
Burq \& Planchon \cite{Burq-Planchon 2005}, 
Molinet \cite{Molinet 2007}, \cite{Molinet 2008},
and the references therein. 
On the contrary, to the best of our knowledge, there are few works about time-periodic or quasi-periodic solutions of Benjamin-Ono equations. 
One of them is \cite{ambrose-wilkening}, where 2-mode periodic solutions of \eqref{original Benj-Ono} are studied by numerical methods; another one is \cite{Liu-Yuan}, which deals with an old very interesting question. 

In \cite{Liu-Yuan} Liu and Yuan apply a Birkhoff normal form and KAM method
to show the existence of quasi-periodic solutions of a Benjamin-Ono equation that is a Hamiltonian analytic perturbation of \eqref{original Benj-Ono}, with Hamiltonian of the form
\[
H(u) + \e K(u), \quad H = \text{Benjamin-Ono}, \quad \gr K(u) = \text{bounded operator}.
\]
The resulting equation is of the type
\begin{equation}  \label{eq:Liu-Yuan}
\pa_t u = - \pa_x \{\mH u_x + \tfrac{1}{2} u^2 + \e \gr K(u)\} \ 
= A u + F(u),
\end{equation}
where the Hamiltonian vector field has a linear part $A$, which loses 
$d_A = 2$ derivatives, and a nonlinear part $F$, which loses   
$d_F = 1$  derivative and, for this reason, is an \emph{unbounded} operator. 

In general, as it was proved in the works of Lax, Klainerman and Majda on the formation of singularities (see for example \cite{klainerman-majda}), the presence of unbounded nonlinear operators can compromise the existence of invariant structure like periodic orbits and KAM tori. 
In fact, the wide existing literature on KAM and Nash-Moser theory mainly deals with problems where the perturbation is bounded (see Kuksin \cite{Kuksin oxford}, Craig \cite{Craig petit}, Berti \cite{Berti-book} for a survey. See also Moser \cite{Moser-Math-Ann-1967} where the KAM iteration is applied in problems where the Hamiltonian structure is replaced by reversibility).

For unbounded perturbations, quasi-periodic solutions have been constructed via KAM 
theory by 
Kuksin \cite{Kuksin oxford} and 
Kappeler \& P\"oschel \cite{Kappeler-Poeschel}
for KdV equations where $d_A = 3$ and the gap between the loss of derivatives of the linear and nonlinear part is $ \g := (d_A - d_F) = 2$, in analytic class; 
more recently, 
in \cite{Liu-Yuan} for NLS and \eqref{eq:Liu-Yuan} where $d_A = 2$ and $\g = 1$, in $C^\infty$ class;
by Zhang, Gao \& Yuan \cite{Zhang-Gao-Yuan} for reversible NLS equations with $d_A = 2$ and $\g = 1$; 
and by Berti, Biasco \& Procesi \cite{Berti-Biasco-Procesi-KAM-2011}, where wave equations with a derivative in the nonlinearity become a Hamiltonian system with $d_A = 1$ and $\g = 1$, in analytic class. 
See also Bambusi \& Graffi \cite{Bambusi-Graffi} for a related linear result that  corresponds to a gap $\g > 1$.  

Periodic solutions for unbounded perturbations have been obtained for wave equations by Craig \cite{Craig petit} for $\g > 1$; 
by Bourgain \cite{Bourgain Chicago 99} in the non-Hamiltonian case $u_{tt} - u_{xx} + u + u_t^2 = 0$; 
by the author in \cite{Baldi Kirchhoff} for the quasi-linear equation
$u_{tt} - \Delta u (1 + \int |\gr u|^2 dx) = \e f(t,x)$, where the integral plays a special role ($\int |\gr u|^2 dx$ depends only on time).
Also the pioneering result of Rabinowitz \cite{Rabinowitz-tesi-1969-II} for fully nonlinear wave equations of the form 
\[
u_{tt} - u_{xx} + \a u_t + \e F(x,t,u,u_x,u_t,u_{xx}, u_{xt}, u_{tt}) = 0
\]
certainly has to be mentioned here; however, the dissipative term $\a \neq 0$ destroys any Hamiltonian or reversible structure and completely avoids the resonance phenomenon of the small divisors.

The threshold $\g = 1$ in Hamiltonian problems with small divisors 
has been crossed in the works of Iooss, Plotnikov and Toland 
\cite{Plo-Tol}, \cite{Ioo-Plo-Tol}, \cite{Ioo-Plo}, \cite{Ioo-Plo-existence} about the completely resonant fully nonlinear ($\g = 0$) problem of periodic standing water waves on a deep 2D ocean with gravity. 
So far their very powerful technique, which is a combination of (1) changes of variables and conjugations with pseudo-differential operators to obtain a normal form, and (2) a differentiable Nash-Moser scheme,  
is essentially the only known method to overcome the small divisors problem in quasi-linear and fully nonlinear PDEs. 

Note that recently normal form methods for quasi-linear Hamiltonian PDEs have also been successfully applied to Cauchy problems, see Delort \cite{Delort-2009}.


Thus, some of the general, challenging and open questions that come from the aforementioned works are these:

\begin{itemize} 
\item Which gap $\g$ is the limit case for the existence of invariant tori for 
nonlinear Hamiltonian PDEs? How many derivatives can stay in the nonlinearity?

\item 
What is the role of the Hamiltonian structure? Can it be replaced by other structures? 
\end{itemize}

The motivations of the present paper are in these questions. 
Theorem \ref{thm:main} joins the above mentioned results in the aim of approaching an answer, at least in simple cases, and shows that 
\begin{itemize}
\item[$(i)$] 
if the dimension is the lowest for a PDE, $(t,x) \in \T^2$,  and 

\item[$(ii)$] 
the derivatives in the nonlinearity have a suitable structure (see \eqref{g1 g2},\eqref{g0},\eqref{parity mN},\eqref{parity mN II}), 
\end{itemize}
then problem \eqref{eq:main}, where $\g = 0$ (the nonlinearity $\mN(u)$ loses 2 derivatives like the linear part) admits solutions that bifurcate from the equilibrium $u=0$. 
The Hamiltonian structure here is replaced by reversibility:
\eqref{eq:main}, in general, is a \emph{non}-Hamiltonian perturbation of the cubic Benjamin-Ono Hamiltonian equation 
\[ 
\partial_{t} u + \mH \partial_{xx} u + \pa_x(u^3) = 0,
\]
but $\mN(u)$ satisfies the reversibility condition \eqref{mN(u) rev}. 

Let us explain the reversible structure in some detail. 
As a dynamical system, problem \eqref{eq:main} is 
\begin{equation} \label{dyn system}
\pa_t u(t) = V(u(t)),
\end{equation}
a first order ordinary differential equation in the infinite-dimensional phase space $L^2(\T;\R)$, where the vector field $V : H^2(\T;\R) \to L^2(\T;\R)$, $u \mapsto V(u)$ is 
\[
V(u)(x) = - \mH \partial_{xx} u(x) - \pa_x(u^3(x)) - \mN_4(u)(x).
\]
The phase space can be split into two subspaces $L^2_e \oplus L^2_o$ of even and odd functions of $x \in \T$ respectively,
\[
u = u^e + u^o, \quad 
u^e(-x) = u^e(x), \quad 
u^o(-x) = - u^o(x), \quad 
x \in \T, \quad 
u \in L^2(\T;\R).
\]
To decompose $u = u^e + u^o$ means to split the real and imaginary part of each Fourier coefficient of $u \in L^2(\T;\R)$, namely
\[
u(x) = \sum_{j \in \Z} \hat{u}_j \, e^{ijx}, \quad 
u^e(x) = \sum_{j \in \Z} (\mathrm{Re}\, \hat{u}_j) \, e^{ijx}, \quad 
u^o(x) = \sum_{j \in \Z} i (\mathrm{Im}\, \hat{u}_j) \, e^{ijx}.
\]
Consider the reflection 
\begin{equation} \label{S involution even odd}
R : \ u = u^e + u^o \ \mapsto \ Ru = u^e - u^o.
\end{equation}
$R$ is a $\R$-linear bijection of $L^2(\T;\R)$, and $R^2$ is the identity map. 
In terms of Fourier coefficients, 
\begin{equation} \label{S involution Fourier}
R : \ u(x) = \sum_{j \in \Z} \hat{u}_j \, e^{ijx} \ \mapsto \ Ru(x) = \sum_{j \in \Z} \overline{\hat{u}_j} \, e^{ijx},
\end{equation}
where $\overline{\hat{u}_j}$ is the complex conjugate of $\hat{u}_j$. 
Note that $Ru$ is real-valued for every real-valued $u$.
\eqref{dyn system} is a \emph{reversible} system in the sense that 
\begin{equation} \label{rev vera}
V \circ R = - R \circ V.
\end{equation}
It is immediate to check \eqref{rev vera} for the linear part $\mH \pa_{xx}$ of $V$ using \eqref{S involution Fourier}, and for the cubic part $\pa_x(u^3)$ using \eqref{S involution even odd}.
To prove \eqref{rev vera} for $\mN_4(u)$, using \eqref{parity mN}, \eqref{parity mN II} and \eqref{S involution even odd} one has
\[
\a(-x) = - \b(x), \quad 
\a(x) := \mN_4(Ru)(x), \quad 
\b(x) := \mN_4(u)(x).
\]
Splitting $\a = \a^e + \a^o$, $\b = \b^e + \b^o$ and projecting the equality $\a(-x) = - \b(x)$ onto $L^2_e$ and $L^2_o$ give $\a^e = - \b^e$ and $\a^o = \b^o$, namely $R\b = - \a$, which is \eqref{rev vera} for $\mN_4$.

\eqref{rev vera} implies that $V(u) \in L^2_o$ for all $u \in L^2_e \cap H^2$. 
For, $L^2_e$ is the set of fixed points $u = Ru$, therefore $V(u) = -R V(u)$, whence $(V(u))^e = 0$.

By \eqref{rev vera}, if $u(t)$ solves \eqref{dyn system}, then also $Su(t) := R(u(-t))$ is a solution of \eqref{dyn system}. 
Thus we look for solutions of \eqref{dyn system} in the subspace $X$ of the fixed points of $S$. It is easy to see, 
using \eqref{S involution even odd}, \eqref{S involution Fourier},  
that $X$ is the space of functions $u(t,x)$ that are even in the time-space pair $(t,x)$, namely $u(-t,-x) = u(t,x)$. 

\medskip

\medskip

To prove Theorem \ref{thm:main} we apply (and slightly modify, under certain technical aspects; see below) the method of Iooss, Plotnikov and Toland. 
Like in \cite{Ioo-Plo-Tol}, 
the main difficulties here are: 
$(i)$ in the bifurcation equation, which is infinite-dimensional   
(for this reason \eqref{eq:main} is said to be a \emph{completely resonant} problem);  
and, especially, $(ii)$ in the inversion of the linearized operator, which has non-constant coefficients also in the highest order derivatives and, therefore, contains small divisors that are not explicitly evident.

The main tool in the inversion proof is the reduction of the linearized operator $\mL$ to constant coefficients up to a regularizing rest, by means of changes of variables first (to obtain proportional coefficients in the highest order terms), then by the conjugation with simple linear pseudo-differential operators that imitate the structure of $\mL$ 
(they are the composition of multiplication operators with the Hilbert transform $\mH$), to obtain constant coefficients also in terms of lower order, and to lower the degree of the highest non-constant term.

Since we look for periodic solutions, after a finite number of steps this reducibility scheme implies the invertibility of $\mL$, by standard Neumann series. 

\medskip

Other, and minor, technical points are the following.
Like in \cite{Ioo-Plo-Tol}, the Lyapunov-Schmidt decomposition is not used directly on the nonlinear equation, as it would be made in classical applications 
(see \cite{Berti-book} for the Lyapunov-Schmidt decomposition in completely resonant problems).
Instead, it is used a first time at the beginning of the proof, in a formal power series expansion of the nonlinear problem, to look for a suitable starting point of the Nash-Moser iteration. In other words, this means to find a non-degenerate solution of the ``unperturbed bifurcation equation''. 
In Theorem \ref{thm:main} the existence and the non-degeneracy conditions are the first and the second inequality in \eqref{bif e non-deg nel teorema} respectively. 
Then the Lyapunov-Schmidt decomposition is used a second time in the inversion proof for the linearized operator, in each step of the Nash-Moser scheme.

This method seems to be more complicated than the usual Lyapunov-Schmidt decomposition on the nonlinear problem, at least at a first glance. 
However, it simplifies the analysis when working with changes of variables (namely compositions with diffeomorphisms of the torus $\T^2$). 
In fact, changes of variables do not behave very well with respect to the orthogonal projections onto subspaces of $L^2$, because they are not ``close to the identity'' in the same way as multiplications operators are
(in the language of harmonic analysis, changes of variables are Fourier integral operators, and not pseudo-differential operators. See also Remark \ref{rem:loss of 1 for Psi-I}). 
For this reason, it is simpler to work in the  whole function space $H^s(\T^2)$ instead of distinguishing bifurcation and orthogonal subspaces, at least for the first step of reducibility. 

Nonetheless, in our setting \eqref{bif setting} we keep track of the natural ``different amount of smallness'' between the bifurcation and the orthogonal components of the problem. 
Thanks to this small change with respect to \cite{Ioo-Plo-Tol}, we avoid factors $\e^{-1}$ in the Nash-Moser scheme and simplify the measure estimate for the small divisors. 

Regarding the Nash-Moser scheme, the recent and powerful abstract Nash-Moser theorem for PDEs that is contained in \cite{Berti-Bolle-Procesi-AIHP-2010} does not apply directly here, as it designed to be used with Galerkin approximations, while in our Nash-Moser scheme, after the reduction to constant coefficients, it is natural to insert the smoothing operators in a different position: see \eqref{sequence un}. Even if our iteration scheme is very close to the usual one, this small difference brings our problem out of the field of applicability of the theorem in \cite{Berti-Bolle-Procesi-AIHP-2010}.

Going back to the ``unperturbed bifurcation equation'', we point out that the restriction of the functional setting to the subspace $X$ of even functions (a restriction that can be made because of the reversible structure) eliminates a degeneration and makes it possible to prove the non-degeneracy of the solution. 
Moreover, the solutions we find in Theorem \ref{thm:main} 
are genuinely \emph{multimodal}: 
for $m=1$ the second inequality in \eqref{bif e non-deg nel teorema} is never satisfied, whereas for every $m \geq 2$ there exist suitable integers $k_1,\ldots,k_m$ that satisfy \eqref{bif e non-deg nel teorema} and produce a non-degenerate solution.
This is a nonlinear effect: the solutions of Theorem \ref{thm:main} exist as a consequence of the nonlinear interaction of different modes.

Regarding the special structure \eqref{g1 g2},\eqref{g0}, the restriction of assuming (I) or (II), instead of considering the more general case
\begin{equation} \label{more general mN4}  
\mN_4(u) = g(x,u,\mH u, u_x, \mH u_x, u_{xx}, \mH u_{xx}),
\end{equation}
is due to a technical reason: when $\mN_4(u)$ is of the type (I) or (II), in the process of reducing the linearized operator $\mL$ to constant coefficients we use simple transformations, namely changes of variables, multiplications, the Hilbert transform $\mH$ and negative powers of $\pa_x$ (which are Fourier multipliers).
On the contrary, in the general case \eqref{more general mN4} these special transformations are not sufficient to conjugate $\mL$ to a normal form, and one needs more general transformations: changes of variables should be replaced by general Fourier integral operators. 
In the intermediate case in which $\mN_4$ in \eqref{more general mN4} does not depend on $u_{xx}$ (but it does on $\mH u_x$), an additional term of the type $b(t) \pa_x \mH$ appears in the transformed linearized operators after the changes of variables. 
This term could be removed by a simple Fourier integral operator: see Remark \ref{rem:FIO}. 

Regarding the choice of the leading term $\pa_x(u^3)$ in \eqref{eq:main} (which is the first natural case to study after the integrable one $\pa_x(u^2)$), we remark that the cubic power has no special reversibility property: $\pa_x(u^p)$ satisfy the reversibility condition \eqref{rev vera} for every (both even and odd) power $p \in \N$. The proof of this fact is the same as above: if $f(u) = \pa_x(u^p)$, using \eqref{S involution even odd} one proves that $\{f(Ru)\}(-x) = - \{ f(u)\}(x)$, then $f \circ R = - R \circ f$.

Finally, the coefficient 3 in the frequency-amplitude relation $\om = 1 + 3 \e^2$ could be replaced by any other positive number: 
3 is simply the most convenient choice to do when working with the cubic nonlinearity $\pa_x(u^3)$. 
On the contrary, what is determined by the nonlinearity in an essential way is \emph{the sign} of that coefficient: for the equation 
\[
u_t + \mH u_{xx} - \pa_x(u^3) + \mN_4(u) = 0,
\] 
in which the cubic nonlinearity has opposite sign, Theorem  \ref{thm:main} holds with $\om = 1 - 3 \e^2$ (the only changes to do are in the bifurcation analysis of Section \ref{sec:bifurcation}).


\bigskip

The paper is organized as follows.
In Section \ref{sec:functional setting} the setting for the problem is introduced. 
In Section \ref{sec:Lyapunov-Schmidt} the formal Lyapunov-Schmidt reduction is performed up to order $O(\e^4)$. 
In Section \ref{sec:bifurcation} non-degenerate solutions $\bar v_1$ of the ``unperturbed bifurcation equation'' are constructed. Here the non-homogeneous dispersion relation of the unperturbed Benjamin-Ono linear part
\[
l + j|j| = 0,
\]
where $l$ is the Fourier index for the time and $j$ the one for the space, is used in a crucial way. The basic properties of this relation are proved in Appendix \ref{Appendix A. Kernel properties}.
In Sections \ref{sec:The linearized equation} and \ref{sec:reduction} the linearized operator is reduced to constant coefficients. Most of the proofs of the related estimates are in Appendix \ref{sec:appendix proofs} and use classical results of Sobolev spaces (tame estimates for changes of variables, compositions and commutators with the Hilbert transform) that are listed in Appendix \ref{sec:Appendix B. Tame estimates}.
In Section \ref{sec:inversion} the transformed linearized operator is inverted. 
In Section \ref{sec:iteration} the Nash-Moser induction is performed, and the measure of the Cantor set of parameters is estimated.

\medskip

\emph{Acknowledgements.} I express my gratitude to Massimiliano Berti for many fruitful discussions and suggestions, Pavel Plotnikov, G\'erard Iooss and Thomas Alazard for useful conversations, and John Toland for 
introducing me to the problem. 

This work is partially supported by the Italian PRIN2009 grant \emph{Critical Point Theory and Perturbative Methods for Nonlinear Differential Equations}, and by the European Research Council, FP7, project \emph{New connections between Dynamical Systems and Hamiltonian PDEs with Small Divisors Phenomena}.

\section{\large{Functional setting}} \label{sec:functional setting}
Let 
\[
\mF(u,\om) := \om u_t + \mH u_{xx} + \mN(u), \quad 
\mN(u) := \pa_x(u^3) + \mN_4(u).
\]
Let $Z := L^2(\T^2,\R)$. Decompose
\begin{equation*} 
\Z^2 = \Z^2_C + \Z^2_T + \Z^2_E, 
\quad 
\Z^2_C = \{ (0,0) \}, 
\quad 
\Z^2_T = \{ (l,0) : l \neq 0 \},
\quad
\Z^2_E = \{ (l,j) : j \neq 0 , \ l \in \Z\},
\end{equation*}
let 
\[
Z_C = \R, \quad 
Z_T = \Big\{ u \in L^2(\T) : \intp u(t) \, dt = 0 \Big\} ,
\quad 
Z_E = \Big\{ u \in Z : \intp u(t,x) \, dx = 0 \Big\} ,
\]
so that $Z = Z_C \oplus Z_T \oplus Z_E$, namely 
every $u(t,x) \in Z$ splits into three components
\begin{align*} 
u(t,x) & 
= \Big( \sum_{\Z^2_C} + \sum_{\Z^2_T} + \sum_{\Z^2_E} \Big) 
 \hat u_{l,j}\,e^{i(lt+jx)} 
= \hat u_{0,0} + \sum_{l \neq 0} \hat u_{l,0} \, e^{ilt} 
+ \sum_{j \neq 0} u_j(t) \, e^{ijx} ,
\end{align*}
and denote $\Pi_C, \Pi_T, \Pi_E$ the projections onto $Z_C, Z_T, Z_E$.
Let $Z_0$ be the space of zero-mean functions, and $\pp$ the projection onto $Z_0$,
\begin{equation} \label{Z0 pp}
Z_0 := Z_T \oplus Z_E, \quad 
\PP := I - \Pi_C = \Pi_T + \Pi_E.
\end{equation}
We define $\partial_x\inv$ as the Fourier multiplier 
\begin{equation*} 
\partial_x\inv e^{ijx} = \frac{1}{ij}\, e^{ijx} \quad \forall j \neq 0, 
\quad \partial_x\inv 1 = 0 ,
\end{equation*}
and similarly $\partial_t\inv$. 
Note that $\partial_x\inv \partial_x = \Pi_E$, $\mH \mH = - \Pi_E$.

To eliminate a degeneration that appears in the bifurcation equation, as it was mentioned above where the reversible structure was discussed, 
we consider the subspaces of even/odd functions with respect to the time-space vector $(t,x)$: 
\[
X := \big\{ u \in Z : \ u(-t,-x) = u(t,x) \big\} ,
\quad  
Y := \big\{ u \in Z : \ u(-t,-x) = - u(t,x) \big\} .
\]
In terms of Fourier coefficients, every $u \in Z$ is 
$u = \sum_{k \in \Z^2} u_k e_k$ with $u_{-k} = \bar u_k$ (because $u$ is real-valued), 
namely $u_k = a_k + i b_k$, with $a_k, b_k \in \R$ and $a_{-k} = a_k$, $b_{-k} = - b_k$, therefore  
\[
X = \Big\{ u = \sum_{k \in \Z^2} a_k e_k : \ a_k \in \R, \ a_{-k} = a_k \Big\} , 
\qquad 
Y = \Big\{ u = \sum_{k \in \Z^2} i b_k e_k : \ b_k \in \R, \ b_{-k} = - b_k \Big\} ,
\]
and $L^2(\T^2,\R) = Z = X \oplus Y$.
The usual rules for even/odd functions hold: $uv \in X$ if both $u,v \in X$ or both $u,v \in Y$, and $uv \in Y$ if $u \in X$, $v \in Y$.
Moreover $\mH, \partial_x, \partial_t$ are all operators that change the  parity, namely they map $Y$ into $X$ and viceversa, because they are diagonal operators with respect to the basis $\{e_k\}$ with purely imaginary eigenvalues. 
Assumption \eqref{parity mN} implies that the nonlinearity $\mN$ maps $X \cap H^2$ into $Y$, like the linear part $\om \pa_t + \pa_{xx}\mH$ does, therefore 
$\mF(u,\om) \in Y$ for all $u \in X \cap H^2$.

Also, we denote 
\[
X_0 := X \cap Z_0,
\]
while $Y \cap Z_0 = Y$. 
We set problem \eqref{eq:main 2} in the space $X_0$ of even functions with zero mean, namely we look for solutions of the equation 
\begin{equation} \label{mF=0 even odd}
\mF(u,\om) = 0, \quad u \in X_0.
\end{equation}

\medskip

\emph{Notation.} To distinguish $L^2$- and $L^\infty$-based Sobolev spaces, 
in the whole paper the following notation is used: 
two bars for $L^2$-based Sobolev norms $\| u \|_s$ \eqref{norm two bars}, 
and one bar for $L^\infty$-based Sobolev norms
\begin{equation*} 
|u|_s 
= \| u \|_{W^{s,\infty}} 
= \sum_{0 \leq |\a| \leq s} \, \sup_{(t,x)} |\pa^\a_{(t,x)} u(t,x)|, \quad s \in \N.
\end{equation*}

\section{\large{Linearization at zero and formal Lyapunov-Schmidt reduction}}
\label{sec:Lyapunov-Schmidt}

Let 
\[
L := \partial_t + \partial_{xx}\mH, 
\quad 
L [e^{i(lt+jx)}] = i(l+j|j|) \, e^{i(lt+jx)}.
\]
Split $\Z^2 = \mV \cup \mW$,  
\[ 
\mV := \{ (l,j) \in \Z^2 : \ l+j|j| = 0 \} 
= \{ (-j|j|,j) : \ j \in \Z \}, 
\qquad 
\mW := \Z^2 \setminus \mV 
\] 
and $Z = V \oplus W$,  
\[
V := \Big\{ u = \sum_{k \in \mV} u_k e_k \in Z \Big\},
\quad 
W := \Big\{ u = \sum_{k \in \mW} u_k e_k \in Z \Big\}.
\]
$V$ is the kernel of $L$ and $W$ is its range.
Also, let $V_0 := V \cap Z_0$, so that $Z_0 = V_0 \oplus W$. 

We write a finite number of terms of a formal power series expansion to obtain a good starting point for our Nash-Moser scheme. 
Let 
\[
\om = 1 + \sum_{k \geq 1} \om_k \e^k, \qquad 
u = \sum_{k \geq 1} u_k\, \e^k\, \in Z_0, \quad 
u_k = v_k + w_k, \quad 
v_k \in V_0, \ \ w_k \in W.
\]
Then 
\begin{align*} 
\mF(u,\om) & = L u + (\om-1)\pat u + \partial_x(u^3) + \mN_4(u) \\
& = \e \, L u_1 
+ \e^2 \big\{ L u_2 + \om_1 \pat u_1 \big\} 
+ \e^3 \big\{ L u_3 + \om_1 \pat u_2 + \om_2 \pat u_1 + \pax(u_1^3) \big\} 
\\
& \quad 
+ \e^4 \big\{ L u_4 + \om_1 \pat u_3 + \om_2 \pat u_2 + \om_3 \pat u_1 + \pax(3 u_1^2 u_2) + \e^{-4} \mN_4(\e u_1)\big\} 
+ O(\e^5) 
\\ & = \sum_{k \geq 1} \e^k \mF_k.
\end{align*}
In general, $\mN_4(\e u_1)$ also contains terms of higher order than $\e^4$; in any case, $\mN_4(u) - \mN_4(\e u_1) = O(\e^5)$.

At order $\e$, 
$\mF_1 = L u_1 = 0$ if $w_1 = 0$ and $u_1 = v_1 \in V_0$. 
Then $\mF_2$ becomes
\[
\mF_2 = L u_2 + \om_{1} \partial_t u_{1} 
= L w_2 + \om_{1} \partial_t v_{1}.
\]
$L w_2 \in W$ and $\om_{1} \partial_t v_{1} \in V_0$. 
Since we look for $v_1 \neq 0$, we have $\mF_2 = 0$ if 
$w_2 = 0$, $\om_1 = 0$, $u_2 = v_2 \in V_0$.

At order $\e^3$ the nonlinearity begins to give a contribution: 
$\mF_3 = L w_3 + \om_2 \partial_t v_1 + \partial_x(v_1^3)$.
The ``unperturbed bifurcation equation'' is the equation 
$\Pi_V \mF_3 = 0$ in the unknown $v_1$, namely 
\begin{equation} \label{bif 0}
\om_2 \partial_t v_1 + \Pi_V \partial_x(v_1^3) = 0.
\end{equation}
In the next section (see Proposition \ref{prop:bif}) we construct nontrivial, nondegenerate solutions $\bar v_1$ of \eqref{bif 0} with 
$\om_2 = 3$.
A solution $v_1$ of \eqref{bif 0} for any other value $\om_2 > 0$ can be obtained by homogeneity by taking $v_1 = \lm \bar v_1$, $\lm = (\om_2/3)^{1/2}$.  Hence there is no loss of generality in fixing $\om_2 = 3$. 
At order $\e^4$, 
\[
\mF_4 = L u_4 + 3 \pat v_2 + \om_3 \pat v_1 + \pax(3 v_1^2 v_2) + \e^{-4} \mN_4(\e v_1).
\]
We fix $\om_3 = 0$. 
The ``linearized unperturbed bifurcation equation'' is the equation 
$\Pi_V \mF_4 = 0$ in the unknown $v_2$, namely 
\begin{equation} \label{linearized unperturbed bif eq}
3 \pat v_2 + \Pi_V \pax(3 v_1^2 v_2) = - \e^{-4} \Pi_V \mN_4(\e v_1),
\end{equation}
which has a unique solution $\bar v_2(\e)$ because $\bar v_1$ is a nondegenerate solutions of \eqref{bif 0}.
Thus, at $u = \e \bar v_1 + \e^2 \bar v_2(\e)$ and $\om = 1+3\e^2$,
\begin{align} \label{mF bar v1 bar v2}
\mF(\e \bar v_1 + \e^2 \bar v_2, \, 1+3\e^2) 
& = \e^3 \Pi_W \pa_x(\bar v_1^3) 
+ \e^4 \Pi_W \pa_x(3 \bar v_1^2 \bar v_2) 
+ \mN_4(\e \bar v_1 + \e^2 \bar v_2) 
- \mN_4(\e \bar v_1)
\notag 
\\ & \quad 
+ \Pi_W \mN_4(\e \bar v_1) 
+ \e^5 \pa_x(3 \bar v_1 \bar v_2^2)   
+ \e^6 \pa_x(\bar v_2^3).
\end{align}
With these power of $\e$, 
the sufficient accuracy is achieved to start the quadratic Nash-Moser scheme (see section \ref{sec:iteration}). 
Hence, for $\e > 0$, let
\begin{align} 
F(u, \e) 
& := ( \e^{-4} \Pi_V + \e^{-2} \Pi_W ) \mF(\e \bar v_1 + \e^2 u,\om) 
\label{bif setting}
\\ 
& = \e^{-2} P_\e\inv  \mF ( \e \bar v_1 + \e^2 u, \, 1+3\e^2) 
\notag 
\\
& = \Pi_V \{ 3\pa_t u + \pa_x (3 \bar v_1^2 u + \e 3 \bar v_1 u^2 + \e^2 u^3) 
+ \e^{-4} \mN_4(\e \bar v_1 + \e^2 u) \}  
\label{formula F}
\\
& \quad + \Pi_W \{ L u + \e^2 3\pa_t u + \e \pa_x [(v_1 + \e u)^3] 
+ \e^{-2} \mN_4(\e \bar v_1 + \e^2 u) \}, 
\notag
\end{align}
\begin{equation*} 
\om := 1 + 3 \e^2, \quad 
P_\e := \e^{2} \,\Pi_V + \Pi_W, \quad 
P_\e\inv = \e^{-2} \,\Pi_V + \Pi_W. \quad
\end{equation*} 

By \eqref{mF bar v1 bar v2}, $F(\bar v_2,\e) = O(\e)$
(see Lemma \ref{lemma:tame per F} for precise estimates). 
For $\e > 0$, problem \eqref{mF=0 even odd} becomes 
\begin{equation} \label{equiv 2}
F(u, \e) = 0, \quad u \in X_0.
\end{equation}
Like $\mF$ does, $F$ also maps $X_0$ into $Y$.

\section{\large{Bifurcation}} \label{sec:bifurcation}

In this section we construct a solution $v \in V_0$ of \eqref{bif 0} and prove its non-degeneracy. 
Recall that in $\mV$ it is $l+j|j|=0$. Let 
\begin{equation} \label{qj}
q_j(t,x) := e^{i(-j|j|t+jx)} , \quad j \in \Z \,.
\end{equation}
Note that $q_{j_1} q_{j_2} = 1 = q_0$ if $j_1 + j_2 = 0$.

\begin{lemma} \label{lemma:prodotti in V}
1) \emph{(Product of two terms)}. 
Let $j_1, j_2 \in \Z$ be both nonzero integers. 
Then 
$\Pi_V (q_{j_1} q_{j_2}) = 0$ 
except the case when $j_1 + j_2 = 0$. 

2) \emph{(Product of three terms)}. 
Let $j_1, j_2, j_3 \in \Z$ be all nonzero integers. Then 
$\Pi_V(q_{j_1} q_{j_2} q_{j_3} ) = 0$
except the case when $j_1 + j_2 = 0$ or $j_1 + j_3 = 0$ or $j_2 + j_3 = 0$. 
\end{lemma}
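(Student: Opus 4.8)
The statement is purely about when $\Pi_V$ of a product of $q_j$'s survives, and since $q_{j_1}\cdots q_{j_k}$ is itself a single exponential $e_k$ with $k = (-\sum j_i|j_i|, \sum j_i)$, the question reduces to deciding when this frequency lies in $\mV$, i.e. when the dispersion relation $l + j|j| = 0$ holds for $l = -\sum_i j_i|j_i|$ and $j = \sum_i j_i$. So in both parts the task is to show the identity
\[
\Big(\sum_i j_i\Big)\Big|\sum_i j_i\Big| \;=\; \sum_i j_i |j_i|
\]
forces the claimed ``pairing'' among the $j_i$'s when all of them are nonzero.

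\emph{Part 1.} Here $q_{j_1} q_{j_2} = e^{i(-(j_1|j_1| + j_2|j_2|)t + (j_1+j_2)x)}$, and this lies in $\mV$ iff $(j_1+j_2)|j_1+j_2| = j_1|j_1| + j_2|j_2|$. The function $\xi \mapsto \xi|\xi|$ is strictly convex on $[0,\infty)$ and strictly concave on $(-\infty,0]$, and it is odd; a clean way to finish is to note that $t\mapsto t|t|$ is strictly monotone and that $(a+b)|a+b| - a|a| - b|b|$ vanishes precisely when $a$ and $b$ have opposite signs (or one is zero) — and since here both are nonzero, opposite signs with $(j_1+j_2)|j_1+j_2| = j_1|j_1|+j_2|j_2|$ forces $|j_1| = |j_2|$, hence $j_2 = -j_1$. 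Concretely: if $j_1,j_2>0$ then $(j_1+j_2)^2 > j_1^2+j_2^2$ (cross term $2j_1j_2>0$), so no cancellation; same for both negative; if they have opposite signs, say $j_1>0>j_2$, write $j_2=-c$ with $c>0$ and check $j_1|j_1|+j_2|j_2| = j_1^2 - c^2 = (j_1-c)(j_1+c)$ versus $(j_1-c)|j_1-c|$, which are equal iff $|j_1-c| = j_1+c$, i.e. iff $j_1 - c \le 0$ is impossible unless... — cleaner still to observe $j_1^2 - c^2 = (j_1-c)|j_1-c|$ holds iff $j_1 - c$ and $j_1^2-c^2$ have matching sign and magnitude, forcing $c = j_1$. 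Either route gives $j_1+j_2=0$.

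\emph{Part 2.} Now $q_{j_1}q_{j_2}q_{j_3}$ lies in $\mV$ iff $(j_1+j_2+j_3)|j_1+j_2+j_3| = j_1|j_1|+j_2|j_2|+j_3|j_3|$. The natural strategy is a case analysis on the signs of $j_1,j_2,j_3$. If all three have the same sign, the left side expands (in absolute value) to $\sum j_i^2 + 2\sum_{i<k} j_i j_k$, strictly larger in absolute value than $\sum|j_i|^2$, so equality fails. If the signs are mixed, say without loss of generality $j_1,j_2 > 0 > j_3$ (the case of two negatives and one positive is symmetric under $j\mapsto -j$), set $a=j_1, b=j_2, c=-j_3$ with $a,b,c>0$; the relation becomes $(a+b-c)|a+b-c| = a^2 + b^2 - c^2$. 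Expand $(a^2+b^2-c^2)$ and compare with $(a+b-c)^2 = a^2+b^2+c^2+2ab-2ac-2bc$ in the subcase $a+b\ge c$, and with $-(a+b-c)^2$ otherwise; in each subcase the equation simplifies to a product of linear factors and forces one of $a=c$, $b=c$ (i.e. $j_1+j_3=0$ or $j_2+j_3=0$) — the possibility $j_1+j_2=0$ does not arise here since $j_1,j_2$ are both positive, but it does arise under the relabeling, which is why the statement lists all three pairings. The bookkeeping is routine but must be done carefully across the handful of sign/magnitude subcases.

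\emph{Main obstacle.} There is no conceptual difficulty — the only real obstacle is organizing the case analysis in Part 2 cleanly enough that it does not balloon, and making sure the ``except the case when'' in the statement is read correctly (it asserts $\Pi_V$ of the product is zero \emph{unless} one of the listed pairings occurs, not that it is necessarily nonzero when a pairing does occur). A tidy way to cut the work is to factor through Part 1: if, say, $j_2+j_3=0$ then $q_{j_2}q_{j_3}=1$ and the product is just $q_{j_1}$, which lies in $\mV$ trivially; and if no pairing occurs, one shows the three-term exponent cannot satisfy the dispersion relation by the sign/convexity argument above. I would also isolate the elementary fact ``$t\mapsto t|t|$ is strictly convex on $\R_{\ge0}$ and odd'' as the single analytic input, so that everything else is finite arithmetic.
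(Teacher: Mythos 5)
Your approach is correct and is essentially the same as the paper's: reduce to the arithmetic question of when $\sum_i j_i|j_i|$ equals $\bigl(\sum_i j_i\bigr)\bigl|\sum_i j_i\bigr|$, then do sign casework (the paper writes $j_k = \sigma_k n_k$ with $n_k = |j_k|$; you write $a,b,c$ after fixing a sign pattern, which is the same thing). The convexity/oddness observation you isolate is not used beyond motivation, and the paper's proof — like yours — proceeds by direct algebra once signs are fixed. One small point worth making explicit if you write this up: in the mixed-sign subcase of Part 2 with $a+b<c$, the equation does not ``simplify to a product of linear factors''; you instead need the inequality $a^2+b^2+ab-c(a+b) \le a^2+b^2+ab-(a+b)^2 = -ab < 0$ to rule it out (the paper handles this by observing that $n_1+n_2+n_4=n_3$ together with $n_1^2+n_2^2+n_4^2=n_3^2$ is already impossible). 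Also, $j_1+j_2+j_3$ can be $0$, which the paper treats as an explicit subcase; in your parametrization it falls under $a+b-c=0$ and is excluded by the same arithmetic, but it is cleanest to flag it.
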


\begin{proof} 
See Appendix \ref{Appendix A. Kernel properties}. 
\end{proof}

Consider $m$ positive distinct integers 
$0 < k_1 < k_2 < \ldots < k_m$, $m \geq 1$,
and let 
\[
\mK := \{ k_1, k_2, \ldots, k_m, -k_1, -k_2, \ldots, -k_m \} \,.
\]
Consider three elements $v, v', v'' \in V_0 \cap X$ with only Fourier modes in $\mK$, 
\[ 
v = \sum_{j \in \mK} a_j q_j , \quad 
v' = \sum_{j \in \mK} b_j q_j , \quad 
v'' = \sum_{j \in \mK} c_j q_j , 
\]
with $a_{-j} = a_j \in \R$, and similar for $b_j, c_j$.
Then
\[
v v' v'' 
= \sum_{j_1, j_2, j_3 \in \mK} a_{j_1} b_{j_2} c_{j_3}
\,  q_{j_1} q_{j_2} q_{j_3} ,
\quad 
\Pi_V (v v' v'')
= \sum_{j_1, j_2, j_3 \in \mK} a_{j_1} b_{j_2} c_{j_3}
\, \Pi_V( q_{j_1} q_{j_2} q_{j_3} ) \,.
\]
Develop the sum with respect to $j_1$. 
Let $k \in \mK$.
For $j_1 = k$, $\Pi_V(q_{j_1} q_{j_2} q_{j_3})$ is nonzero only if: 
\begin{equation} \label{four situations}
\begin{pmatrix} 
j_1 = k \\ 
j_2 = k \\
j_3 = -k
\end{pmatrix} 
\quad \text{or} \quad
\begin{pmatrix} 
j_1 = k \\ 
j_2 = - k \\
j_3 \in \mK 
\end{pmatrix} 
\quad \text{or} \quad
\begin{pmatrix} 
j_1 = k \\ 
j_2 \neq \pm k \\
j_3 = - k
\end{pmatrix} 
\quad \text{or} \quad
\begin{pmatrix} 
j_1 = k \\ 
j_2 \neq \pm k \\
j_3 = - j_2
\end{pmatrix}.
\end{equation}
Hence in the sum only these four cases give a nonzero contribution:
\begin{equation}  \label{4 somme}
\Pi_V (v v' v'')
= \sum_{k \in \mK} a_{k} b_{k} c_{k} \, q_k 
+ \sum_{k,j \in \mK} a_{k} b_{k} c_{j} \, q_{j} 
+ \sum_{k \in \mK, j \neq \pm k} a_{k} b_{j} c_{k} \,q_{j} 
+ \sum_{k \in \mK, j \neq \pm k} a_{k} b_{j} c_{j} \,q_{k} \,.
\end{equation}
Since $\sum_{k \in \mK, j \neq \pm k} = \sum_{k,j \in \mK} - \sum_{k \in \mK, j = k} - \sum_{k \in \mK, j = -k}$,  
the third sum in \eqref{4 somme} is 
\begin{align*} 
\sum_{k \in \mK, j \neq \pm k} a_{k} b_{j} c_{k} \, q_{j} 
& = \sum_{k,j \in \mK} a_{k} b_{j} c_{k} \, q_{j} 
- \sum_{k \in \mK} a_{k} b_{k} c_{k} \, q_{k} 
- \sum_{k \in \mK} a_{k} b_{k} c_{k} \, q_{-k} 
\\ & 
= \sum_{k,j \in \mK} a_{k} b_{j} c_{k} \, q_{j} 
- \sum_{k \in \mK} a_{k} b_{k} c_{k} \, q_{k} 
- \sum_{k \in \mK} a_{k} b_{k} c_{k} \, q_{k} 
\end{align*}
(in the last equality we have made the change of summation variable $k= -k'$). Analogously, the fourth sum in \eqref{4 somme} is 
\begin{align*} 
\sum_{k \in \mK, j \neq \pm k} a_{k} b_{j} c_{j} \, q_{k} 
& = \sum_{k,j \in \mK} a_{k} b_{j} c_{j} \, q_{k} 
- \sum_{k \in \mK} a_{k} b_{k} c_{k}\, q_{k} 
- \sum_{k \in \mK} a_{k} b_{k} c_{k} \, q_{k} \,.
\end{align*}
Thus 
\begin{equation} \label{formula 3-lineare}
\Pi_V (v v' v'') 
= \sum_{k \in \mK} 
\Big\{ -3 a_k b_k c_k 
+ a_{k} \Big( \sum_{j \in \mK}  b_{j} c_{j} \Big) 
+ b_{k} \Big( \sum_{j \in \mK}  a_{j} c_{j} \Big) 
+ c_{k} \Big( \sum_{j \in \mK}  a_{j} b_{j} \Big) \Big\} 
\, q_{k} \,.
\end{equation}
The formula for $\Pi_V [\pa_x(v v' v'')] = \pa_x \Pi_V(vv'v'')$ simply has 
$i k\, q_k$ instead of $q_k$.
For $v=v'=v''$, \eqref{formula 3-lineare} gives
\begin{align*} 
\Pi_V (v^3) & 
= 3 \sum_{k \in \mK} \Big( - a_k^2 + \sum_{j \in \mK}  a_{j}^2 \Big)
a_{k} \, q_{k} \,.
\end{align*}
Then 
\begin{align*} 
3 \partial_t v + \Pi_V [ \partial_x( v^3)]
& = 3 \sum_{k \in \mK} 
\Big( - |k| - a_k^2 + \sum_{j \in \mK} a_{j}^2 \Big) \, a_k \, i k \, q_{k} \,. 
\end{align*} 
This is zero if 
\begin{equation} \label{pass 1}
\Big( \sum_{j \in \mK} a_{j}^2 \Big) - a_k^2 = |k| \quad \forall k \in \mK \,.
\end{equation}
Since $\sum_{j \in \mK} a_{j}^2 = 2(a_{k_1}^2 + \ldots + a_{k_m}^2)$,
\eqref{pass 1} is equivalent to 
\begin{equation} \label{pass 2}
\begin{cases} 
a_{k_1}^2 + 2 a_{k_2}^2 + 2 a_{k_3}^2 + \ldots + 2 a_{k_m}^2 
& = k_1 \\ 
2 a_{k_1}^2 + a_{k_2}^2 + 2 a_{k_3}^2 + \ldots + 2 a_{k_m}^2 
& = k_2 \\ 
\qquad \ldots \qquad \ldots \qquad \ldots  
& \quad \ldots \\ 
2 a_{k_1}^2 + 2 a_{k_2}^2 + 2 a_{k_3}^2 + \ldots + a_{k_m}^2 
& = k_m , 
\end{cases} 
\end{equation}
which is a system of $m$ equations in the $m$ unknowns $a_{k_1}^2$, \ldots, $a_{k_m}^2$. 
Let $M$ the $m \times m$ matrix that has $1$ on the principal diagonal and $2$ everywhere else. $M$ is invertible, and its inverse $M\inv$ is the $m \times m$ matrix that has $\a$ on the principal diagonal and $\b$ everywhere else, with
\[
\a = - \frac{m-3/2}{m-1/2}\,, \quad \b = \frac{1}{m-1/2}\,.
\]
Hence \eqref{pass 2} is equivalent to 
\begin{equation} \label{pass 3}
a_{k_1}^2 = \rho_1, \quad \ 
a_{k_2}^2 = \rho_2, \quad \ 
\ldots \quad \ \  
a_{k_m}^2 = \rho_m,
\end{equation}
where $(\rho_1, \ldots, \rho_m) := M\inv (k_1, \ldots, k_m)$, namely
\begin{equation} \label{rho} 
\rho_i := \a k_i + \b \sum_{j \neq i} k_j 
\ = \frac{1}{m-1/2} \, \Big( \sum_{j=1}^m k_j \Big) - k_i , 
\qquad i=1,\ldots,m\,.
\end{equation}
\eqref{pass 3} has solutions with all $a_j \neq 0$ 
if all $\rho_j$ are positive. 
Note that $\rho_j > \rho_{j+1}$, because $\b - \a = 1$ and 
\[
\rho_j - \rho_{j+1} = \a k_j + \b k_{j+1} - \b k_j - \a k_{j+1} 
= k_{j+1} - k_j > 0 \,.
\] 
Hence all $\rho_j > 0$ if $\rho_m > 0$, namely if 
\begin{equation} \label{bif cond} 
k_1 + \ldots + k_{m-1} > k_m (m-3/2) \,.
\end{equation}
When $a_j$ satisfy \eqref{pass 3}, 
\begin{equation} \label{sommina}
\sum_{j \in \mK} a_j^2 
= 2(a_{k_1}^2 + \ldots + a_{k_m}^2) 
= \frac{1}{m-1/2} \, \sum_{i=1}^m k_i \,.
\end{equation}

\begin{remark}  \label{rem:packet}
$k_1, \ldots, k_m$ satisfy \eqref{bif cond} if they are sufficiently close, as if they form a ``packet'' of integers. 
Note also that if the smallest and the biggest integers satisfy the stronger condition
\begin{equation} \label{diadic}
\frac{k_m}{k_1} \, < \frac{m-1}{m-3/2} \,,
\end{equation}
then $k_1, k_2, \ldots, k_m$ satisfy \eqref{bif cond} for every choice of the intermediate integers $k_2, \ldots, k_{m-1}$, because 
\[
k_1 + k_2 + \ldots + k_{m-1} 
> (m-1) k_1 >  (m-3/2) k_m .
\]
\eqref{diadic} is meaningful because $(m-1)/(m-3/2) > 1$.
\end{remark}

\medskip

Now we prove that for every $f \in V_0 \cap Y$ there is a unique $h \in V_0 \cap X$ such that 
\begin{equation} \label{lin bif}
3 \partial_t h + \Pi_V \partial_x (3 v^2 h) = f.
\end{equation}
Let $f \in V \cap Y$ and $h \in V \cap X$,
\[
f = \sum_{j \neq 0} i y_j q_j \in V \cap Y, \quad  
y_{-j} = - y_{j} \in \R, 
\qquad 
h = \sum_{j \neq 0} h_j q_j \in V \cap X, \quad  
h_{-j} = h_{j} \in \R.
\]
Split 
\[
f = \Pi_\mK f + \Pi_\mK^\perp f, \quad 
\Pi_{\mK} f := \sum_{j \in \mK} i y_j q_j , \quad 
\Pi_{\mK}^\perp f := \sum_{j \notin \mK} i y_j q_j , 
\]
and similarly $h = \Pi_\mK h + \Pi_\mK^\perp h$. 
The formula for $\Pi_V \partial_x (v^2 \Pi_\mK h)$ is obtained from \eqref{formula 3-lineare} with $b_j = a_j$ and $c_j = h_j$, namely
\begin{align*} 
\Pi_V \partial_x( v^2 (\Pi_\mK h))
& = \sum_{k \in \mK} 
\Big\{ -3 a_k^2 h_k  
+ 2 a_{k} \Big( \sum_{j \in \mK}  a_{j} h_{j} \Big) 
+ h_{k} \Big( \sum_{j \in \mK}  a_{j}^2 \Big) \Big\} \, i k \, q_{k} .
\end{align*}
Hence
\begin{align*}
3 \partial_t (\Pi_\mK h) + \Pi_V \partial_x (3 v^2 \Pi_\mK h) 
& = 3 \sum_{k \in \mK} 
\Big\{ - |k| h_k 
-3 a_k^2 h_k  
+ 2 a_{k} \Big( \sum_{j \in \mK}  a_{j} h_{j} \Big) 
+ h_{k} \Big( \sum_{j \in \mK}  a_{j}^2 \Big) \Big\} \, i k \, q_{k} 
\end{align*}
which is, replacing $|k|$ by \eqref{pass 1}, 
\[ 
= 3 \sum_{k \in \mK} \Big\{ 
- 2 a_k^2 h_k  + 2 a_{k} \Big( \sum_{j \in \mK}  a_{j} h_{j} \Big) 
\Big\} \, i k \, q_{k}  
= 6 \sum_{k \in \mK} \Big\{ 
- a_k h_k  + \sum_{j \in \mK}  a_{j} h_{j} \Big\} \, a_k \,i k \, q_{k} .
\]
Note that this sum has only Fourier modes in $\mK$; in other words, 
the space of functions in $V$ that are Fourier-supported on $\mK$ is an invariant subspace for the operator $3\partial_t + \Pi_V \partial_x(3v^2 \cdot \,)$ (with, of course, the change of parity $X \to Y$). 

Thus, the equation $3 \partial_t (\Pi_\mK h) + \Pi_V \partial_x (3 v^2 (\Pi_\mK h)) = \Pi_\mK f$ is equivalent to
\[
- a_k h_k  + \sum_{j \in \mK}  a_{j} h_{j} = \frac{y_k}{6 k a_k} =: y'_k
\quad \forall k \in \mK,
\]
namely to the system 
\begin{equation} \label{pass 4}
M \begin{pmatrix} a_{k_1} h_{k_1} \\ \vdots \\ a_{k_m} h_{k_m} \end{pmatrix}
= \begin{pmatrix} y'_{k_1} \\ \vdots \\ y'_{k_m} \end{pmatrix} 
\end{equation}
because $y'_{-k} = y'_k$ for all $k \in \mK$, where $M$ is the $m \times m$ matrix defined above ($1$ on the principal diagonal and $2$ everywhere else). Therefore there exists a unique solution of \eqref{pass 4}, 
\[
h_{k_i} = \frac{1}{a_{k_i}} \, \Big( \a y'_{k_i} + \b \sum_{j \neq i} y'_{k_j} \Big) .
\]
Since $a_j$ solve \eqref{pass 3}, 
\[
\sum_{j \in \mK} h_j^2 \leq C \sum_{j \in \mK} y_j^2 ,
\]
where $C>0$ depends only on $k_1, \ldots, k_m$ and $m$.

Now consider $\Pi_\mK^\perp h, \Pi_\mK^\perp f$. 
In the product 
\[
v^2 (\Pi_\mK^\perp h) = \sum_{j_1, j_2 \in \mK, j_3 \notin \mK} a_{j_1}  a_{j_2} h_{j_3} \, q_{j_1} q_{j_2} q_{j_3} 
\]
only the second case of \eqref{four situations} occurs, 
namely $j_1 = k = -j_2 \in \mK$, $j_3 \notin \mK$. Hence 
\[
\Pi_V \partial_x(v^2 (\Pi_\mK^\perp h)) 
= \sum_{k \in \mK, j \notin \mK} a_k^2 h_j \, i j\, q_j 
= \Big( \sum_{k \in \mK} a_k^2 \Big) \sum_{j \notin \mK} i j \, h_j \, q_j 
= \frac{k_1 + \ldots + k_m}{m-1/2} \, \partial_x(\Pi_\mK^\perp h)
\]
by \eqref{sommina}. Therefore
\[
3 \partial_t (\Pi_\mK^\perp h) 
+ \Pi_V \partial_x (3 v^2 (\Pi_\mK^\perp h)) 
 = 3 \sum_{j \notin \mK} 
\Big( - |j| + \frac{k_1 + \ldots + k_m}{m-1/2} \Big) \, ij h_j\, q_j .
\]
Analogously as above, note that this sum has only Fourier modes out of $\mK$; in other words, 
the space of functions in $V$ that are Fourier-supported on the complementary of $\mK$ is invariant for the operator $3\partial_t + \Pi_V \partial_x(3v^2 \cdot \,)$ (with the change of parity $X \to Y$). 
The condition for the invertibility is 
\begin{equation} \label{non-deg temp 1}
\frac{k_1 + \ldots + k_m}{m-1/2} \, \neq |j| \quad 
\forall j \notin \mK .
\end{equation}
When \eqref{bif cond} holds, $k_1 + \ldots + k_m > k_m (m-1/2)$, therefore $(k_1 + \ldots + k_m)/(m-1/2)$ is automatically out of $\mK$.
Hence \eqref{non-deg temp 1} can be more easily written in this equivalent form:
\begin{equation} \label{non-deg}
\frac{k_1 + \ldots + k_m}{m-1/2} \, \notin \N .
\end{equation}
\eqref{non-deg} implies that 
\begin{equation} \label{non-deg implies}
\Big| - |j| + \frac{k_1 + \ldots + k_m}{m-1/2} \Big| \geq \d |j|
\quad \forall j \neq 0,
\end{equation}
where $\d > 0$ depends only on $k_1, \ldots, k_m$ and $m$. 
Therefore the equation $3 \partial_t (\Pi_\mK^\perp h) + \Pi_V \partial_x (3 v^2 (\Pi_\mK^\perp h)) = \Pi_\mK^\perp g$ has a unique solution $\Pi_\mK^\perp h$, with 
\[
|h_j| \leq \frac{C}{|j|^2} \, |y_j| \, \quad \forall j \neq 0, \ 
j \notin \mK.
\]
Also, by \eqref{sommina} and Lemma \ref{lemma:prodotti in V}, 
$(k_1 + \ldots + k_m)/(m-1/2) = \Pi_C(v^2)$, therefore  
\eqref{non-deg implies} can be written as 
$| \Pi_C(v^2) - |j| | \geq \d |j|$ for all $j \neq 0$. 

We have proved the following result:

\begin{proposition}[Bifurcation for cubic nonlinearities] 
\label{prop:bif}
Let $m \geq 2$. Let 
$0 < k_1 < k_2 < \ldots < k_m$ 
be $m$ positive integers that satisfy \eqref{bif cond} and \eqref{non-deg}.
Then there exist $m$ positive numbers $\rho_1, \ldots, \rho_m > 0$, given by \eqref{rho}, and  constants $C,\d > 0$ that depend only on $k_1, \ldots, k_m$ and have the following property. 

Let $\mK := \{k_1, \ldots, k_m, -k_1, \ldots, -k_m\}$. 
Every function $v = \sum_{j \in \mK} a_j q_j \in V_0 \cap X$ which is Fourier-supported on $\mK$ with 
\[
a_{k_1}^2 = \rho_1, \quad \ldots \quad a_{k_m}^2 = \rho_m 
\]
is a solution of the unperturbed bifurcation equation 
$3 \partial_t v + \Pi_V \partial_x (v^3) = 0$.

For every $f \in V_0 \cap Y$ there exists a unique $h \in V_0 \cap X$ such that 
$3 \partial_t h + \Pi_V \partial_x (3 v^2 h) = f$.

If $f \in H^s$, $s \geq 0$, then $h \in H^{s+1}$, with $\| h \|_{s+1} \leq C \| f \|_{s}$.
Moreover 
\[
| \Pi_C( v^2) - |j| | \geq \d |j|
\quad \forall j \in \Z, \ j \neq 0.
\]
\end{proposition}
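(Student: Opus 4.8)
\medskip
\noindent\emph{Proof idea.}
The plan is to assemble the explicit computations made in this section into the three assertions. Everything rests on the trilinear identity \eqref{formula 3-lineare} (a consequence of Lemma \ref{lemma:prodotti in V}) together with the invertibility of the $m\times m$ matrix $M$ with $1$ on the diagonal and $2$ elsewhere, whose inverse is written out above with entries $\a$ on the diagonal and $\b$ off it.

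First I would produce the bifurcation solution. Putting $v=v'=v''$ in \eqref{formula 3-lineare} turns $3\partial_t v+\Pi_V\partial_x(v^3)$ into a diagonal expression in the $a_k$, so the equation $3\partial_t v+\Pi_V\partial_x(v^3)=0$ is equivalent to the scalar conditions \eqref{pass 1}. Using $\sum_{j\in\mK}a_j^2=2(a_{k_1}^2+\dots+a_{k_m}^2)$ these become the linear system \eqref{pass 2}, whose unique solution is \eqref{pass 3}--\eqref{rho}, obtained by applying $M\inv$. The monotonicity $\rho_j>\rho_{j+1}$ (since $\b-\a=1$ and $k_{j+1}>k_j$), together with hypothesis \eqref{bif cond}, i.e.\ $\rho_m>0$, then forces every $\rho_i>0$; this is the first assertion, and any such $v$ lies in $V_0\cap X$ because $a_{-j}=a_j$ and $0\notin\mK$.

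Next I would invert the linear operator $\mathcal{T}:=3\partial_t+\Pi_V\partial_x(3v^2\,\cdot\,)$ from $V_0\cap X$ to $V_0\cap Y$. The decisive point is that $\mathcal{T}$ leaves invariant the splitting of $V_0$ into functions Fourier-supported on $\mK$ and on its complement. On the $\mK$-part one uses \eqref{formula 3-lineare} with $b_j=a_j$, $c_j=h_j$ and replaces $|k|$ by \eqref{pass 1}; the equation collapses to the finite system \eqref{pass 4}, again governed by $M$, so there is a unique solution with $\sum_{j\in\mK}h_j^2\le C\sum_{j\in\mK}y_j^2$. On the complement $\mathcal{T}$ is the Fourier multiplier with eigenvalues $3ij\bigl(-|j|+\sigma\bigr)$, where $\sigma:=(k_1+\dots+k_m)/(m-1/2)=\Pi_C(v^2)$ by \eqref{sommina} and Lemma \ref{lemma:prodotti in V}; hypothesis \eqref{non-deg temp 1}, equivalently \eqref{non-deg}, makes every such eigenvalue nonzero, and \eqref{non-deg implies} gives the quantitative bound $|h_j|\le C|y_j|/|j|^2$. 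Adding the two pieces yields existence and uniqueness of $h$, and summing the weighted Fourier coefficients gives the tame estimate $\|h\|_{s+1}\le C\|f\|_s$; moreover $f\in Y$ means $y_{-j}=-y_j$, which forces $h_{-j}=h_j$, so $h\in X$, and $h$ has zero mean since no $q_0$ mode appears. The last inequality $|\,\Pi_C(v^2)-|j|\,|\ge\d|j|$ for $j\neq0$ is then just \eqref{non-deg implies} rewritten through $\Pi_C(v^2)=\sigma$.

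The individual computations are routine; the one point that will need care — and the reason the bifurcation analysis is set up in the even subspace $X$ in the first place — is that on the full space $V_0$ the matrix governing the $\mK$-block would be singular, so the reversibility-motivated restriction to $X$ is exactly what removes that degeneracy. The other thing to verify carefully is that the two non-degeneracy hypotheses \eqref{bif cond} and \eqref{non-deg} correspond, separately, to invertibility of $\mathcal{T}$ on the $\mK$-block and on its complement.
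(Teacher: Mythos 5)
Your proof is correct and follows essentially the same route as the paper: the trilinear formula \eqref{formula 3-lineare}, the $M$-system \eqref{pass 2}/\eqref{pass 4} for the $\mK$-block, the multiplier computation $3ij(-|j|+\Pi_C(v^2))$ on the complement, and the non-degeneracy condition \eqref{non-deg} delivering \eqref{non-deg implies}. Your parenthetical remark that working on the full $V_0$ (without the restriction to $X$) would make the $\mK$-block singular is accurate — $\partial_t v$ would lie in the kernel, reflecting time-translation invariance — and is indeed the degeneracy that the reversible structure is invoked to remove, though this is motivation rather than part of the proof proper.
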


\section{\large{The linearized equation}}
\label{sec:The linearized equation}

Remember that 
\[
F(u, \e) = \e^{-2} P_\e\inv \mF ( \e \bar v + \e^2 u, \, \om), \quad 
\om = 1+3\e^2, \quad 
P_\e\inv = \e^{-2} \,\Pi_V + \Pi_W,
\]
where $\bar v := \bar v_1$ is a solution of the unperturbed bifurcation equation \eqref{bif 0} as in Proposition \ref{prop:bif}. 
The linearized operator $F'(u,\e)$ applied to $h$, 
namely the Fr\'echet derivative $\partial_u F(u,\e)[h]$ of $F$ with respect to $u$ in the direction $h$, 
is then 
\[
F'(u,\e)h = \e^{-2} P_\e\inv \mF'(\e \bar v + \e^2 u,\om)[\e^2 h] = P_\e\inv \mL(u,\e)h,
\]
\[
\mL(u,\e)h := \mF'(\e \bar v + \e^2 u,\om)[h]
= \om \partial_t h + (1+a_1) \mH \pa_{xx} h + a_2 \mH \pax h 
+ a_3 \pax h + a_4 \mH h + a_5 h
\]
where the coefficients $a_i = a_i(t,x) = a_i(u,\e)(t,x)$ are periodic functions of $(t,x)$, depending on $u,\e$, and are obtained from $\pa_x(U^3)$ and the partial derivatives of $g_1$, $g_2$ or $g_0$ evaluated at  
$(x,U(t,x), \mH U(t,x),\ldots)$, 
$U := \e \bar v + \e^2 u$. 
For example, in case (I) 
\begin{equation} \label{a2 in case (I)}
a_1(t,x) = (\pa_{y_2} g_2)(x,U(t,x),\mH U_x(t,x)), 
\qquad a_2(t,x) = \pa_x a_1(t,x),
\end{equation}
and in case (II) 
\begin{equation} \label{a2 in case (II)}
a_1(t,x) = (\pa_{y_4} g_0)(x,U(t,x), \mH U(t,x), U_x(t,x), \mH U_{xx}(t,x)),
\qquad a_2(t,x) = 0.
\end{equation}

$\mN(U) = \pa_x (U^3) + O(U^4)$, and $U = \e \bar v + \e^2 u = O(\e)$, therefore $a_1, a_2, a_4 = O(\e^3)$, $a_3,a_5 = O(\e^2)$. 
More precisely: let $\d_0 \in (0,1)$ be a universal constant such that 
\begin{equation}  \label{piccolezza base}
\|(U, \mH U, U_x, \mH U_x, \mH U_{xx})\|_{\linf} < 1 \quad 
\forall U \in H^4(\T^2), \ \| U \|_4 < \d_0.  
\end{equation}

\begin{proposition} \label{prop:a12345}
Let $K > 0$. There exists $\e_0 \in (0,1)$, depending on $K$, 
with the following property:
if $\e \in (0,\e_0)$, 
$\| u \|_4 \leq K$, and 
\begin{equation} \label{pallata}
\| \e \bar v_1 + \e^2 u \|_4 
\leq \e_0 \| \bar v_1 \|_{4} + \e_0^2 \| u \|_4
< \d_0,
\end{equation}
then the coefficients $a_i(u,\e)(t,x)$, $i=1,\ldots,5$ satisfy 
\begin{equation} \label{coeff a12345}
| a_1 |_s + |a_2 |_s + | a_3 - \e^2 3\bar v^2 |_s + |a_4|_s
+ | a_5 - \e^2 (3\bar v^2)_x |_s 
\leq \e^3 C(s,K) (1+\| u \|_{s+4}),  
\quad 0 \leq s \leq r.
\end{equation}
$a_i$ is of class $C^1$ as a function of $(u,\e)$, with
\begin{multline} \label{coeff a12345 der u}
\sum_{i=1,2,4} | \pa_u a_i(u,\e)[h] |_s 
+ | \pa_u a_3(u,\e)[h] - \e^3 6 \bar v h|_s 
+ | \pa_u a_5(u,\e)[h] - \e^3 (6 \bar v h)_x |_s 
\\
\leq \e^4 C(s,K) (\|h\|_{s+4} + \|u\|_{s+4} \|h\|_4), 
\end{multline}
\begin{equation} \label{coeff a12345 der epsilon}
\sum_{i=1,2,4} | \pa_\e a_i(u,\e) |_s 
+ | \pa_\e a_3(u,\e) - \e 6\bar v^2 |_s 
+ | \pa_\e a_5(u,\e) - \e (6\bar v^2)_x |_s 
\leq \e^2 C(s,K) (1 + \|u\|_{s+4}),
\end{equation}
for $0 \leq s \leq r$.
The constant $C(s,K) >0$ depend on $s$, $K$, and $K_{g,r}$ of \eqref{Kgr}. In these estimates the norm $\| \bar v_1 \|_{s+4}$ appears like a constant $C(s)$ depending on $s$. 
\end{proposition}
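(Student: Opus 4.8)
The plan is to extract the coefficients $a_i$ explicitly from the formula for $\mN(U)=\pa_x(U^3)+\mN_4(U)$ and then estimate them by the tame composition estimates of Appendix \ref{sec:Appendix B. Tame estimates}. First I would compute $\mF'(\e\bar v+\e^2 u,\om)[h]$ directly. The term $\pa_x(U^3)$ contributes $\pa_x(3U^2 h) = 3U^2 h_x + (3U^2)_x h$, so it adds $3U^2$ to the coefficient of $\pax h$ and $(3U^2)_x$ to the coefficient of $h$. Writing $U = \e\bar v + \e^2 u$ and expanding $U^2 = \e^2 \bar v^2 + 2\e^3 \bar v u + \e^4 u^2$, one sees $3U^2 = \e^2 3\bar v^2 + (2\e^3\cdot 3\bar v u + \e^4 3 u^2)$, which accounts for the subtracted main terms $\e^2 3\bar v^2$ in $a_3$ and $\e^2(3\bar v^2)_x$ in $a_5$, and for the main terms $\e^3 6\bar v h$ in $\pa_u a_3$, $\e 6\bar v^2$ in $\pa_\e a_3$, etc. For $\mN_4(U)$ of type (I) or (II), differentiation in $h$ via the chain rule produces, for each argument $y_\ell$ of $g_i$, a factor $(\pa_{y_\ell} g_i)(x,U,\mH U,\ldots)$ times the corresponding derivative of $h$ (namely $h$, $\mH h$, $h_x$, or $\mH h_{xx}$); in case (I) the $\pa_x(g_2(\cdot))$ piece contributes, after one $\pa_x$, both an $\mH\pa_{xx}$-coefficient $a_1 = \pa_{y_2}g_2$ and an $\mH\pax$-coefficient $a_2 = \pa_x a_1$, exactly as in \eqref{a2 in case (I)}, while in case (II) the $\mH U_{xx}$-argument of $g_0$ gives $a_1 = \pa_{y_4}g_0$ and $a_2 = 0$. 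Collecting the coefficients of $\mH\pa_{xx}h$, $\mH\pax h$, $\pax h$, $\mH h$, $h$ gives the five functions $a_1,\ldots,a_5$ as sums of products of $\pa_y^\a g_i$ composed with $(x,U,\mH U,\ldots)$ and (in case (I), after $\pa_x$) their first spatial derivatives.

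Next I would estimate. Under hypothesis \eqref{pallata}, $U$ satisfies $\|U\|_4 < \d_0$, so by \eqref{piccolezza base} the arguments $(U,\mH U, U_x,\mH U_x, \mH U_{xx})$ are in the $L^\infty$-ball where $g_i$ is defined; together with the $C^r$-bound \eqref{Kgr} on $g_i$ and its $y$-derivatives up to order $4$, the composition estimate gives, for $0\le s\le r$,
\[
| (\pa_y^\a g_i)(x,U,\mH U,\ldots) |_s \le C(s,K)\big(1 + \|(U,\mH U,\ldots)\|_{s+2}\big) \le C(s,K)\big(1+\|u\|_{s+4}\big),
\]
where I use that $\mH$ and $\pa_x$ lose derivatives boundedly on $H^s(\T^2)$ and that $\|\bar v_1\|_{s+4}$ is a fixed constant. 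The products of such factors with the (first-order, in case (I)) derivatives of $g_i$ are handled by the tame product estimate $|fg|_s \le C(|f|_s|g|_0 + |f|_0|g|_s)$. The smallness $\e^3$ in \eqref{coeff a12345} comes from the vanishing condition \eqref{g order 4}: since $\pa_y^\a g_i(x,0)=0$ for $|\a|\le 3$, a Taylor expansion in $y$ around $0$ gives $(\pa_y^\a g_i)(x,U,\ldots) = O(|U|^{4-|\a|})$ pointwise, and since $U = O(\e)$ in every Sobolev norm this yields the factor $\e^{4-|\a|}$; the coefficients $a_1,a_2,a_4$ collect only terms with $|\a|\ge 1$ (so $O(\e^3)$), while $a_3,a_5$ also receive the $\pa_x(U^3)$-contribution $\e^2 3\bar v^2$, which is why only the difference $a_3 - \e^2 3\bar v^2$ (resp. $a_5 - \e^2(3\bar v^2)_x$) is $O(\e^3)$. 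I would make this Taylor argument quantitative by writing $(\pa_y^\a g_i)(x,y) = \sum_{|\b|=4-|\a|} y^\b \int_0^1 \frac{(1-\th)^{3-|\a|}}{(3-|\a|)!}(\pa_y^{\a+\b}g_i)(x,\th y)\,d\th \cdot c_\b$ and substituting $y = (U,\mH U,\ldots)$, then applying the composition and product estimates to the integrand.

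For the $C^1$-dependence on $(u,\e)$ and the estimates \eqref{coeff a12345 der u}, \eqref{coeff a12345 der epsilon}, I would differentiate the explicit formulas for $a_i$ under the integral/chain rule: $\pa_u a_i(u,\e)[h]$ produces one extra $y$-derivative of $g_i$ (raising $|\a|$ by one, but $\pa_u U = \e^2 h$ contributes a factor $\e^2$, and together with the Taylor gain one loses one power of $U=O(\e)$ and gains $\e^2$, netting $O(\e^4)$ for $i=1,2,4$) times a component of $(h,\mH h,\ldots)$; the tame product estimate then gives the stated $\|h\|_{s+4} + \|u\|_{s+4}\|h\|_4$ structure. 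Similarly $\pa_\e U = \bar v + 2\e u$, so $\pa_\e a_i$ costs one fewer power of $\e$ than $a_i$, giving $O(\e^2)$, with the $\e 6\bar v^2$ (resp. $\e(6\bar v^2)_x$) main term in $\pa_\e a_3$ (resp. $\pa_\e a_5$) coming from $\pa_\e(\e^2 3\bar v^2) = \e 6\bar v^2$. Continuity of these derivatives in $(u,\e)$ follows from continuity of composition with $g_i \in C^r$ and of the Sobolev multiplication. The main obstacle I anticipate is purely bookkeeping: organizing the chain-rule expansion so that each resulting term is visibly a product of a composed $g_i$-derivative (to which the composition lemma applies with the right loss of derivatives, here $+2$ for the $\mH\pa_{xx}$-argument and $+4$ after the one extra $\pa_x$ in case (I)) and a function built from $h$ or $u$ by at most two derivatives, so that the tame estimates assemble into exactly the right-hand sides; keeping track of which powers of $\e$ survive after the Taylor expansion in \eqref{g order 4} is the only place where a miscount would change the statement, so I would be careful there.
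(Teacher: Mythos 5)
Your proposal is correct and follows essentially the same route as the paper: read off $a_1,\ldots,a_5$ as first-order $y$-derivatives of $g_i$ composed with $(x,U,\mH U,\ldots)$ (plus the explicit $\pa_x(U^3)$-contributions to $a_3,a_5$), invoke the vanishing \eqref{g order 4} via a Taylor expansion to get the factor $\e^3$ (resp. $\e^4$, $\e^2$ for the $u$- and $\e$-derivatives), and close the tame bounds with the composition and product estimates of Appendix \ref{sec:Appendix B. Tame estimates}. The paper packages the Taylor step slightly more economically as a single application of \eqref{Taylor composition} with $N=2$ (resp. $N=1$ for the derivatives), whereas you write the integral remainder explicitly, but the content is identical.
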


\begin{proof} In Section \ref{sec:appendix proofs}. 
\end{proof}

\begin{remark} In general, the inequality $\|\mH u\|_{L^\infty} \leq C \|u\|_{L^\infty}$ is false (see, for example, \cite{Koosis}), while it is trivially true that $\| \mH u \|_s \leq \| u \|_s$ for all $s$. Therefore to obtain the estimate $\|\mH u_{xx}\|_{L^\infty} \leq C \|u\|_4$ (which is used to prove \eqref{piccolezza base}) the right chain of inequalities is $\| \mH u_{xx} \|_{L^\infty} \leq C \| \mH u_{xx} \|_2 \leq C \|u_{xx} \|_2 \leq C \| u \|_4$.
\end{remark}

Since $\bar v, u \in X$, 
\begin{equation*} 
a_1, a_3, a_4 \in X, \quad 
a_2, a_5 \in Y,
\end{equation*}
and $\mL(u,\e)$ maps $X \cap H^2 \to Y$.

As a pseudo-differential operator, we write 
\begin{equation*} 
\mL := \mL(u,\e) 
= \om \partial_t + (1+a_1(t,x)) \mH \pa_{xx} 
+ a_2(t,x) \mH \pax + a_3(t,x) \pax + a_4(t,x) \mH + a_5(t,x).
\end{equation*}
In this operator notation a function $p(t,x)$ is identified with the multiplication operator $h \mapsto p(t,x)h$, and the composition is understood: for example, $\pax p$ is the operator $p \pax + p_x$, because
$\pax (ph) = p \pax h + p_x h$.

To emphasize that we are in the space of zero mean functions, write 
\begin{equation*} 
\tilde\mL := \pp \mL \pp,
\end{equation*}
where $\pp = I - \Pi_C$ is defined in \eqref{Z0 pp}. 
Since $F$ maps $X_0 \to Y$, also $F'(u,\e)$ maps $X_0 \to Y$, 
therefore 
\[
\tilde\mL h = \mL h \quad \forall h \in X_0 
\]
because $\pp h = h$ and $\pp f = f$ for all $h \in X_0$, $f \in Y$.

\section{\large{Reduction to constant coefficients}}
\label{sec:reduction}

In this section the linearized operator is conjugated to a linear operator with constant coefficients plus a regularizing rest. 
The transformation is performed in several steps.

\subsection{Change of variables} 
\label{sec:change of variables}

As a first step in the reduction proof, we construct a change of variables that transforms $\mL$ into a new operator with constant coefficients in the highest order derivatives $\pa_t$ and $\mH \pa_{xx}$. 
Since $\mL$ maps $X_0$ into $Y$, we want that our transformation maps $X_0 \to X_0$ and $Y \to Y$.

We consider diffeomorphisms of the torus $(t,x) \in \T^2$ which are the composition of ($i$) a time-dependent change of the space variable $x \to x+\b(t,x)$, and ($ii$) a change of the time variable $t \to t+\a(t)$ that does not depend on space. 
Diffeomorphisms of this type preserve the special role of the time variable as ``a parameter'' with respect to pseudo-differential operators of the space variable like $\mH$. 
 
Let 
\begin{equation*} 
\psi : \T^2 \to \T^2, \quad 
\psi(t,x) := (t+\a(t),\, x+\b(t,x)) = (\t,y)
\end{equation*}
and let $\Psi$ be the transformation $\Psi : u \mapsto \Psi u$, 
\[
(\Psi u)(t,x) := u(\psi(t,x)) = u(t+\a(t),\, x+\b(t,x)) = u(\t,y).
\]
$\a(t)$ and $\b(t,x)$ are periodic functions in $Y$ to be determined.

The conjugate $\Psi\inv p \Psi$ of any multiplication operator $p : h(t,x) \mapsto p(t,x)h(t,x)$ is the multiplication operator $(\Psi\inv p)$ that maps $v(\t,y) \mapsto (\Psi\inv p)(\t,y) \, v(\t,y)$.
By conjugation, the differential operators become 
\[
\Psi\inv \pa_t \Psi = [1+(\Psi\inv \a')(\t)] \,\pa_\t 
+ (\Psi\inv \b_t)(\t,y) \, \pa_y, 
\quad 
\Psi\inv \pa_x \Psi = [1+(\Psi\inv \b_x)(\t,y)]\, \pa_y , \quad 
\]
\[
\Psi\inv \pa_{xx} \Psi = [1+(\Psi\inv \b_x)(\t,y)]^2 \, \pa_{yy} 
+ (\Psi\inv \b_{xx})(\t,y) \, \pa_y , 
\quad 
\Psi\inv \mH \Psi = \mH + \mR_\mH,
\]
where $\mR_\mH$ is defined by the last equality, and it is regularizing in space, bounded in time, see Lemma \ref{lemma:commutators}$(iii)$.

Since $\a,\b \in Y$, $\Psi$ maps $X \to X$ and $Y \to Y$. 
However, in general, $\Psi$ does not map $X_0$ into $X_0$.%
\footnote{For example: let $u(t,x) = \cos t \in X_0$, $\b = 0$ and $\a$ such that the inverse of $t \mapsto t+\a(t)$ is $\t \mapsto \t + (1/2) \sin \t$. 
Changing variable in the integral, $\int_{\T^2} (\Psi u) \, dt \, dx = (1/2) \int_{\T^2} \cos^2 \t \, d\t \, dy > 0$, therefore $\Psi u \notin X_0$.}  
To obtain a transformation of $X_0$ onto itself, consider the projection onto $Z_0$,
\[
\tilde \Psi := \PP \Psi \PP.
\]
Since $\Psi\inv \Pi_C = \Pi_C$, one has  
$\pp \Psi\inv \Pi_C = \pp \Pi_C = 0$, and 
\begin{equation} \label{chiavetta}
\pp \Psi\inv \pp = \pp \Psi\inv (I-\Pi_C) = \pp \Psi\inv.
\end{equation}
As a consequence, 
\begin{equation*} 
(\pp \Psi\inv \pp)(\pp \Psi \pp) 
= \pp \Psi\inv \pp \Psi \pp 
= \pp \Psi\inv \Psi \pp 
= \pp,
\end{equation*}
therefore $\tilde \Psi : Z_0 \to Z_0$ is invertible, with inverse 
\[
(\tilde \Psi)\inv = (\pp \Psi \pp)\inv = \pp \Psi\inv \pp.
\]
Thus $\tilde \Psi$ is a linear bijective operator of $X_0 \to X_0$ and $Y \to Y$. Also, 
\begin{equation} \label{[PiC,Psi]}
[\Psi,\pp]h = [\Pi_C, \Psi]h 
= \Pi_C (\tilde\a' + \tilde\b_y + \tilde\a' \tilde\b_y )h 
= \frac{1}{(2\pi)^2} \int_{\T^2} 
h\,\big( \tilde\a' + \tilde\b_y + \tilde\a' \tilde\b_y \big) \,d\t\,dy, 
\end{equation}
where $(\t,y) \mapsto (\t+\tilde\a(\t), \, y+\tilde\b(\t,y)) = \psi\inv(\t,y)$ is the inverse of $\psi$, and similarly 
\[
[\Psi\inv,\pp] = [\Pi_C, \Psi\inv] = \Pi_C (\a'+\b_x+\a'\b_x).
\] 
These commutators are regularizing operators, both in space and time (by integrations by parts, any derivative applied to the argument $h$ moves to $\a,\b$ or $\tilde\a,\tilde\b$).

By \eqref{chiavetta}, 
\begin{equation*} 
\tilde\mL_1 := \tilde \Psi\inv \tilde \mL \tilde \Psi
= \pp \Psi\inv \pp \mL \pp \Psi \pp 
= \pp \Psi\inv \mL \pp \Psi \pp = \pp \mL_1 \pp ,
\end{equation*}
where 
\begin{align} 
\mL_1 & = \om [1+(\Psi\inv \a')(\t)] \,\pa_\t 
+ [1+(\Psi\inv a_1)(\t,y)] \, [1+(\Psi\inv \b_x)(\t,y)]^2 \, \pa_{yy} \mH
\notag 
\\ & \quad 
+ \{ [1+(\Psi\inv a_1)(\t,y)] \, (\Psi\inv \b_{xx})(\t,y) \, 
+ (\Psi\inv a_2)(\t,y) [1+(\Psi\inv \b_x)(\t,y)] \} \, \pa_y \mH 
\notag
\\ & \quad 
+ \{ \om (\Psi\inv \b_t)(\t,y) \, 
+ (\Psi\inv a_3)(\t,y) [1+(\Psi\inv \b_x)(\t,y)] \} \, \pa_y  
\notag
\\ & \quad 
+ (\Psi\inv a_4)(\t,y) \mH 
+ (\Psi\inv a_5)(\t,y) 
+ \mR_1 ,
\notag
\\ 
& \notag
\\
\label{mR1}
\mR_1 & = 
[1+(\Psi\inv a_1)(\t,y)] \, [1+(\Psi\inv \b_x)(\t,y)]^2 \, \pa_{yy} \mR_\mH
\\ & \quad 
+ \{ [1+(\Psi\inv a_1)(\t,y)] \, (\Psi\inv \b_{xx})(\t,y) \, 
+ (\Psi\inv a_2)(\t,y) [1+(\Psi\inv \b_x)(\t,y)] \}\, \pa_y \mR_\mH 
\notag
\\ & \quad 
+ (\Psi\inv a_4)(\t,y) \mR_\mH 
- \pp (\Psi\inv a_5)(\t,y) [\Pi_C, \Psi]
\notag 
\end{align}
because $\mL \Pi_C = a_5 \Pi_C$. 
We look for $\a,\b$ such that the coefficients of $\pa_\t$ and $\pa_{yy}\mH$ are proportional, namely
\begin{equation} \label{propoli 1}
[1+(\Psi\inv a_1)(\t,y)] \, [1+(\Psi\inv \b_x)(\t,y)]^2 
= \mu_2 \, [1+(\Psi\inv \a')(\t)] 
\end{equation}
for some $\mu_2 \in \R$.  \eqref{propoli 1} is equivalent to 
\begin{equation} \label{propoli 2}
\big( 1+a_1(t,x) \big) \, \big(1+\b_x(t,x)\big)^2 
= \mu_2 \, (1+\a'(t)).
\end{equation}
Take the square root of \eqref{propoli 2},
\begin{equation} \label{propoli 3}
1+\b_x(t,x) = \mu_2^{1/2} \, (1+\a'(t))^{1/2} \big( 1+a_1(t,x) \big)^{-1/2},
\end{equation}
and integrate in $dx$,
\begin{equation*} 
1 = \mu_2^{1/2} \, (1+\a'(t))^{1/2} \frac{1}{2\p} \intp (1+a_1)^{-1/2} dx.
\end{equation*}
Take the square, 
\begin{equation} \label{propoli 4}
\mu_2 \, (1+\a'(t)) = \Big( \frac{1}{2\p} \intp (1+a_1)^{-1/2} dx \Big)^{-2} 
=: \rho(t).
\end{equation}
Integrating in $dt$ determines $\mu_2 \in \R$, 
\begin{equation*} 
\mu_2 \, = \Pi_C(\rho) 
= \frac{1}{2\p} \intp \Big( \frac{1}{2\p} \intp (1+a_1)^{-1/2} dx \Big)^{-2} \,dt,
\end{equation*}
then $\a(t) \in Y$ is also determined, 
\begin{equation*} 
\a(t) = \frac{1}{\mu_2} \, \pa_t\inv(\Pi_T \rho)(t).
\end{equation*}
Since $a_1 \in X$, also $\rho \in X$, therefore $\a \in Y$, as it was required. \eqref{propoli 3} gives 
\begin{equation} \label{propoli 5}
\b_x = \rho^{1/2} \, (1+a_1)^{-1/2} - 1 
= \frac{p}{\Pi_{T+C}(p)}\,-1 
= \frac{\Pi_E(p)}{\Pi_{T+C}(p)}\,, \quad 
p := (1+a_1)^{-1/2},
\end{equation}
therefore the $Z_E$-component of $\b$ is determined, 
\begin{equation*} 
(\Pi_E \b)(t,x) = \frac{1}{(\Pi_T p)(t) + \Pi_C(p)} \, (\pa_x\inv \Pi_E p)(t,x).
\end{equation*}
Since $a_1 \in X$, also $p \in X$, and $\Pi_E \b \in Y$, as it was required. The $Z_T$-component of $\b$  
will be determined later. 
With this choice of $\a,\b$, \eqref{propoli 1} is satisfied.
By \eqref{propoli 1}, 
\[
\mL_1 = \mM \mL_2, 
\]
where $\mM$ is the multiplication operator of factor 
$[1+(\Psi\inv \a')(\t)]$, 
\begin{equation} \label{mL3}
\mL_2 = 
\om \pa_\t + \mu_2 \pa_{yy} \mH 
+ a_6(\t,y) \, \pa_y \mH 
+ a_7(\t,y) \, \pa_y 
+ a_8(\t,y) \, \mH 
+ a_9(\t,y) \, 
+ \mR_2,
\end{equation}
\begin{align*}
& a_6(\t,y) := \Psi\inv \Big( \frac{(1+a_1) \b_{xx} + a_2(1+\b_x)}{1+\a'} \Big)(\t,y),
& 
& a_8(\t,y) := \Psi\inv \Big( \frac{a_4}{1+\a'} \Big)(\t,y),
\\ 
& a_7(\t,y) := \Psi\inv \Big( \frac{\om \b_t + a_3(1+\b_x)}{1+\a'} \Big)(\t,y),
& 
& a_9(\t,y) := \Psi\inv \Big( \frac{a_5}{1+\a'} \Big)(\t,y),
\\
& \mR_2 := \frac{1}{1+(\Psi\inv \a')(\t)}\, \mR_1. 
&
&
\end{align*}
We show that 
\begin{equation} \label{trucco magico}
a_6(\t,y) \in Z_E.
\end{equation}
For each fixed $\t = t+\a(t)$, changing variable $y = x+\b(t,x)$, 
$dy = (1+\b_x(t,x))\,dx$ in the integral, 
\[
\intp a_6(\t,y) \, dy 
= \intp \frac{(1+a_1(t,x)) \b_{xx}(t,x) + a_2(t,x)(1+\b_x(t,x))}{1+\a'(t)} (1+\b_x(t,x)) \, dx .
\]
By \eqref{propoli 2}, 
\[
\frac{(1+a_1) \b_{xx} + a_2(1+\b_x)}{1+\a'}\,(1+\b_x) 
= \mu_2\, \frac{(1+a_1) \b_{xx} + a_2(1+\b_x)}{(1+a_1)(1+\b_x)} .
\]
In case (I) $a_2 = (a_1)_x$ (see \eqref{a2 in case (I)}), therefore 
\[
\frac{(1+a_1) \b_{xx} + a_2(1+\b_x)}{(1+a_1)(1+\b_x)} \,
= \frac{[(1+a_1)(1+\b_x)]_x}{(1+a_1)(1+\b_x)} \,
= \pax \{ \log [(1+a_1)(1+\b_x)] \}\,;
\]
in case (II) $a_2 = 0$ (see \eqref{a2 in case (II)}), therefore 
\[
\frac{(1+a_1) \b_{xx} + a_2(1+\b_x)}{(1+a_1)(1+\b_x)} \,
= \frac{\b_{xx}}{1+\b_x} \,
= \pax \{ \log (1+\b_x) \}.
\]
Hence in both cases (I) and (II), by periodicity, $\intp a_6 \, dy = 0$, which is \eqref{trucco magico}. 

\begin{remark} \label{rem:FIO}
The assumptions (I),(II) on the nonlinearity $\mN_4(u)$ have been used to prove \eqref{trucco magico}. In more general situations, when (I)(II) are not satisfied, a term $b(\t) \mH \pa_y$ also appears, where $b(\t) \in Z_T$ is the $Z_T$-component of the coefficient $a_6$
(which here is zero by \eqref{trucco magico}). 
This term can be removed by using the Fourier integral operator 
\[
u(\t,y) = \sum_{j \in \Z} u_j(\t) \, e^{ijy} \ \mapsto \ 
Au(\t,y) = \sum_{j \in \Z} u_j(\t) \,e^{ijy + |j|p(\t)},
\]
where $p(\t) = \pa_{\t}^{-1} b(\t)$.
\end{remark}

Now we choose the $Z_T$-component of $\b$ so that $\Pi_T a_7 = 0$.
Denote $\g(t) := (\Pi_T \b)(t)$. As above, 
\[
\frac{1}{2\p} \int_\T a_7(\t,y)\,dy 
= \frac{1}{2\p} \intp  
\frac{\om \b_t(t,x) + a_3(t,x)(1+\b_x(t,x))}{1+\a'(t)} \, (1+\b_x(t,x)) \, dx.
\]
This integral is equal to some constant $\mu_1 \in \R$ if and only if 
\begin{equation} \label{propoli sigma}
\om \g'(t) + \s(t) = \mu_1 (1 + \a'(t)), \quad 
\s(t) := \frac{1}{2\p} \intp \Big( \om \b^E_t (1+\b^E_x) 
+ a_3 (1+\b^E_x)^2 \Big) \, dx, \quad 
\b^E := \Pi_E \b.
\end{equation}
Hence an integration in $dt$ on $\T$ determines $\mu_1 \in \R$ and $\g \in Z_T$,  
\begin{equation}  \label{nu gamma}
\mu_1 = \Pi_C (\s), 
\quad 
\g(t) = \frac{\mu_1 \a(t) - (\pa_t\inv \Pi_T \s)(t)}{\om} \, \in Z_T.
\end{equation}
Thus
\begin{equation}  \label{a7 mu1}
\Pi_C(a_7) = \mu_1, \quad a_7 - \mu_1 \in Z_E.
\end{equation}
$\s \in X$ because $a_3 \in X$, therefore $\g \in Y$ as it was required.  
Hence $\b = \g + (\Pi_E \b) \in Y$.  
As a consequence, 
\begin{equation} \label{a 6789 parity}
a_6, a_9 \in Y, \quad  a_7, a_8 \in X.
\end{equation}
Since $I = \pp + \Pi_C$, 
\begin{equation*} 
\tilde \mL_1 = \pp \mL_1 \pp 
= \pp \mM \mL_2 \pp 
= (\pp \mM \pp) (\pp \mL_2 \pp) - \pp \mM \Pi_C \mL_2 \pp 
= \tilde \mM \tilde \mL_3,
\end{equation*} 
where
\[
\tilde \mM := \pp \mM \pp, \quad 
\tilde \mL_3 := \pp \mL_3 \pp, \quad 
\mL_3 = \mL_2 - \tilde\mM\inv  \mM \Pi_C \mL_2 .
\]
Thus 
\[
\mL_3 = 
\om \pa_\t + \mu_2 \pa_{yy} \mH + a_6(\t,y) \, \pa_y \mH 
+ a_7(\t,y) \, \pa_y + a_8(\t,y) \, \mH + a_9(\t,y) \, + \mR_3,
\]
\begin{equation*} 
\mR_3 := \mR_2 - \tilde\mM\inv \mM \Pi_C \mL_2.
\end{equation*}
$\tilde \mM$ is invertible, its inverse $\tilde \mM\inv$ maps $X_0 \to X_0$ and $Y \to Y$, and  
\begin{equation}  \label{formula tilde mM inv}
\tilde \mM\inv h = mh - \frac{m}{\Pi_C(m)}\, \Pi_C(mh), \qquad m(\t) := \frac{1}{1+(\Psi\inv \a')(\t)}\,,
\end{equation}
whence 
\begin{equation*} 
\tilde \mM\inv \mM \Pi_C = -\Big( \frac{(\pp m)}{\Pi_C(m)}\Big) \, \Pi_C. 
\end{equation*}
Formula \eqref{formula tilde mM inv} can be proved by a direct calculation: $\tilde\mM \tilde\mM\inv h 
= \tilde\mM\inv \tilde\mM h = h$ for all $h \in Z_0$.

From Proposition \ref{prop:a12345} 
and the explicit formulae above, $\mu_2, \mu_1, \rho, \a,\b,  \g$ all depend on $(u,\e)$ in a $C^1$ way, and the following estimates hold.

\begin{proposition}
\label{prop:Psi mM mR3}
Let $K > 0$. There exists $\e_0 \in (0,1)$, depending on $K$, 
such that, if $\e \in (0,\e_0)$, 
$\| u \|_8 \leq K$, and $\|u\|_4,\e_0$ satisfy \eqref{pallata}, 
then all the following inequalities hold. 

$\mu_2(u,\e)$ and $\mu_1(u,\e)$ satisfy 
\begin{align} 
\label{stima mu2} 
|\mu_2 - 1| 
& \leq \e^3 C(K), 
&
|\pa_u \mu_2[h]| 
& \leq \e^4 C(K) \|h\|_4, 
&
|\pa_\e \mu_2| 
& \leq \e^2 C(K),
\\
\label{stima mu1}
| \mu_1 - \e^2 \Pi_C(3 \bar v^2)| 
& \leq \e^3 C(K), 
&
| \pa_u \mu_1[h]| 
& \leq \e^4 C(K) \|h\|_{5}, 
&
| \pa_\e \mu_1 - \e \Pi_C(6 \bar v^2)| 
&\leq \e^2 C(K).
\end{align}
$\psi(t,x) = (t+\a(t),x+\b(t,x))$ and its inverse 
$\psi\inv(\t,y) = (\t+\tilde\a(\t), y+\tilde\b(\t,y))$ 
are diffeomorphisms of $\T^2$, with 
\begin{equation} \label{stima diffeo psi basic}
|\a |_{1} + |\b |_{1} + |\tilde\a |_{1} + |\tilde\b |_{1} 
< \e^3 C(K) < 1/2,
\quad 
|\a|_{s} + | \b|_s + |\tilde\a |_{s} + |\tilde\b |_{s} 
\leq \e^3 C(s,K) (1+\|u\|_{s+4}), 
\end{equation}
for all $1 \leq s \leq r$.
$\a,\b,\tilde\a,\tilde\b$ are $C^1$ functions of $(u,\e)$.  
For $1 \leq s \leq r-1$, their derivatives satisfy
\begin{align} 
\label{stima alfa beta der u}
|\pa_u \a[h] |_{s} + |\pa_u \b[h] |_s 
+ |\pa_u \tilde\a[h] |_{s} + |\pa_u \tilde\b[h] |_s 
& \leq \e^4 C(s,K) (\| h \|_{s+4} + \| u \|_{s+5} \| h \|_{5}), 
\\
\label{stima alfa beta der epsilon}
|\pa_\e \a|_{s} + |\pa_\e \b|_s
+ |\pa_\e \tilde\a|_{s} + |\pa_\e \tilde\b|_s
& \leq \e^2 C(s,K) (1+ \| u \|_{s+5}).
\end{align}
The operators $\Psi, \Psi\inv$ satisfy
\begin{equation} \label{stima Psi}
\| \Psi f \|_s + \| \Psi\inv f \|_s 
\leq C(s,K) (\|f\|_s + \|u\|_{s+4} \|f\|_1),
\quad 
\| \Psi f \|_0 + \| \Psi\inv f \|_0 
\leq 2 \|f\|_0,
\end{equation}
\begin{equation} \label{stima Psi-I}
\| (\Psi-I)f \|_s + \| (\Psi\inv-I)f \|_s 
\leq \e^3 C(s,K) (\|f\|_{s+1} + \|u\|_{s+5} \|f\|_1),
\end{equation}
for all $1 \leq s \leq r$.
\eqref{stima Psi},\eqref{stima Psi-I} also hold for $\tilde\Psi, \tilde\Psi\inv$. 
Moreover, for $1 \leq s \leq r$,
\begin{gather} 
\label{stima Psi infty}
| \Psi f |_s + | \Psi\inv f |_s 
\leq C(s,K) (|f|_s + \|u\|_{s+4} |f|_1),
\quad 
| \Psi f |_0 = | \Psi\inv f |_0 = |f|_0,
\\ 
\label{stima Psi-I infty}
| (\Psi-I)f |_s + | (\Psi\inv-I)f |_s 
\leq \e^3 C(s,K) (|f|_{s+1} + \|u\|_{s+5} |f|_1).
\end{gather}
The operators $\Psi,\Psi\inv$ depend on $(u,\e)$ via $\a,\b$. 
The derivatives of $\Psi f$, $\Psi\inv f$ with respect to $u$ in the direction $h$ and with respect to $\e$ satisfy
\begin{align}
\label{stima Psi der u}
\| \pa_u (\Psi f)[h] \|_s 
+ \| \pa_u (\Psi\inv f)[h] \|_s 
& \leq \e^4 C(s,K) ( \| f \|_{s+1} \| h \|_5 
+ \| f \|_{1} \| h \|_{s+4}  
+ \| u \|_{s+5} \| f \|_{1} \| h \|_5 ),
\\
\label{stima Psi der epsilon}
\| \pa_\e \Psi f \|_s 
+ \| \pa_\e \Psi\inv f \|_s 
& \leq \e^2 C(s,K) ( \| f \|_{s+1} + \| u \|_{s+5} \| f \|_{1}),
\end{align}
for all $1 \leq s \leq r-1$.
\eqref{stima Psi der u} and \eqref{stima Psi der epsilon} also hold with $| \ |_s$ instead of $\| \ \|_s$ on the left-hand side and on $f$.
\eqref{stima Psi der u} and \eqref{stima Psi der epsilon} also hold 
for $\tilde\Psi, \tilde\Psi\inv$.

For $2 \leq s \leq r$,  
\begin{equation} \label{stima mM-I}
\| (\tilde\mM-I) f\|_s + \|(\tilde\mM\inv -I)f\|_s 
\leq \e^3 C(s,K) ( \|f\|_s + \|u\|_{s+4} \| f \|_2).
\end{equation} 
The derivatives of $\tilde\mM f$, $\tilde\mM\inv f$ with respect to $u$ in the direction $h$ and with respect to $\e$ satisfy
\begin{align}
\label{stima mM der u}
\| \pa_u (\tilde\mM f)[h] \|_s 
+ \| \pa_u (\tilde\mM\inv f)[h] \|_s 
& \leq \e^4 C(s,K) ( \| f \|_{s} \| h \|_6 
+ \| f \|_{2} \| h \|_{s+5}  
+ \| u \|_{s+6} \| f \|_{2} \| h \|_5 ),
\\
\label{stima mM der epsilon}
\| \pa_\e \tilde\mM f \|_s 
+ \| \pa_\e \tilde\mM\inv f \|_s 
& \leq \e^2 C(s,K) 
( \| f \|_{s} + \| u \|_{s+6} \| f \|_{2}),
\end{align}
for $2 \leq s \leq r-2$.

The coefficients of $\mL_3$ satisfy
\begin{align} \label{a 6789 epsilon}
| a_6 |_{s} + | a_7 - \e^2 3 \bar v^2 |_{s} 
+ | a_8 |_s + | a_9 - \e^2 (3 \bar v^2)_x |_s 
& \leq \e^3 C(s,K) (1+\|u\|_{s+6}),
\\
| \pa_u a_6[h] |_{s} + | \pa_u a_7[h] |_{s}
+ | \pa_u a_8[h] |_{s} + | \pa_u a_9[h] |_{s} 
& \leq \e^4 C(s,K) (\| h \|_{s+4} + \| u \|_{s+6} \| h \|_5),
\\
| \pa_\e a_6 |_{s} + | \pa_\e a_7 - \e 6 \bar v^2 |_{s} 
+ | \pa_\e a_8 |_s + | \pa_\e a_9 - \e (6 \bar v^2)_x |_s 
& \leq \e^2 C(s,K) (1+\|u\|_{s+6}).
\end{align}

For $s,m_1,m_2 \geq 0$, $m=m_1+m_2$, $m+s+1 \leq r$, 
\begin{equation} \label{stima mRmH}
\| \pa_x^{m_1} \mR_\mH \pa_x^{m_2} f \|_s 
\leq \e^3 C(s,m,K) \big( \| f \|_s (1+\|u\|_{m+5})  
+  \|u\|_{s+m+5} \| f \|_0).
\end{equation}
For $m,s \geq 0$, $m+s+3 \leq r$, 
\begin{equation} \label{stima mR123}
\| \mR_i \pa_y^m f \|_s 
\leq \e^3 C(s,m,K) \big( \|f\|_s (1 + \|u\|_{m+7}) + \| f \|_0 \|u\|_{s+m+7}\big), 
\quad  i=1,2,3. 
\end{equation}
\end{proposition}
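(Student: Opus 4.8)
Here is how I would approach Proposition~\ref{prop:Psi mM mR3}.

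\medskip

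The plan is to prove every displayed estimate by substituting, into the explicit formulas for $\rho,\mu_2,\a,\b,\mu_1,\g,a_6,\dots,a_9,\mR_1,\mR_2,\mR_3$ constructed in this subsection, the bounds on $a_1,\dots,a_5$ from Proposition~\ref{prop:a12345}, and then invoking the classical tame estimates collected in Appendix~\ref{sec:Appendix B. Tame estimates}: the algebra and Moser estimates for products and for $C^\infty$ functions of a small argument, the tame bounds for composition with a diffeomorphism of $\T^2$, and the commutator estimates with $\mH$ (in particular Lemma~\ref{lemma:commutators}). Two conventions make the bookkeeping uniform: first, $\bar v=\bar v_1$ is a fixed trigonometric polynomial, so every $\|\bar v\|_s$ or $|\bar v|_s$ is a finite constant $C(s)$, used tacitly; second, ``$X=O(\e^3)$'' below is shorthand for ``$|X|_s$ (or $\|X\|_s$) $\le\e^3C(s,K)(1+\|u\|_{s+c})$'' for a suitable $c$, and $\e_0$ is taken small so that \eqref{pallata} (hence \eqref{piccolezza base}) holds and all the denominators appearing below are $1+O(\e^3)$.

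\medskip

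First I would treat the scalars and the diffeomorphism. Since $a_1,a_2,a_4=O(\e^3)$ tamely by Proposition~\ref{prop:a12345}, the Moser estimate for $t\mapsto(1+t)^{-1/2}$ gives $p:=(1+a_1)^{-1/2}=1+O(\e^3)$, whence $\rho=1+O(\e^3)$ by \eqref{propoli 4} and $\mu_2-1=\Pi_C(\rho-1)=O(\e^3)$; since $\a=\mu_2\inv\pa_t\inv(\Pi_T\rho)$ and $\pa_t\inv$ is bounded (indeed smoothing in time) on zero-time-average functions, the bound on $\a$ follows. Because the constant $1$ has zero $x$-frequency, $\Pi_E p=\Pi_E(p-1)=O(\e^3)$, so $\b_x=\Pi_E p/\Pi_{T+C}p=O(\e^3)$ by \eqref{propoli 5} and $\Pi_E\b=\pa_x\inv\Pi_E\b_x=O(\e^3)$. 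For $\mu_1,\g$: in \eqref{propoli sigma} one has $\b^E=\Pi_E\b=O(\e^3)$ and $a_3=\e^2 3\bar v^2+O(\e^3)$, and the $x$-average of $\bar v^2$ is the constant $\sum_j a_j^2$ (since $q_{j_1}q_{j_2}=1$ exactly on the diagonal, by Lemma~\ref{lemma:prodotti in V}), so its $\Pi_T$-part vanishes; hence $\Pi_T\s=O(\e^3)$, $\mu_1=\Pi_C(\s)=\e^2\Pi_C(3\bar v^2)+O(\e^3)$, and then \eqref{nu gamma} gives $\g=O(\e^3)$; the bounds on $\tilde\a,\tilde\b$ follow from those on $\a,\b$ via the inverse diffeomorphism. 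The $u$- and $\e$-derivative bounds come by differentiating the very same formulas, using the chain rule and the derivative parts of Proposition~\ref{prop:a12345}; the loss of one (resp.\ two) units of the regularity budget $r$ is the price of the extra $x$- or $t$-derivative carried by the change-of-variables formulas (resp.\ by $\tilde\mM\inv$).

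\medskip

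Next the operators. The bounds \eqref{stima Psi}, \eqref{stima Psi infty} are the standard tame estimates for composition with $\psi$; here $|\Psi f|_0=|f|_0$ because $\psi$ is a diffeomorphism of the compact $\T^2$, and the factor $2$ in the $L^2$ bound absorbs the Jacobian $1+O(\e^3)$. The crucial ``$-I$'' estimates \eqref{stima Psi-I}, \eqref{stima Psi-I infty} follow from the fundamental theorem of calculus along the path $\theta\mapsto\psi_\theta:=(t+\theta\a(t),\,x+\theta\b(t,x))$, which is a diffeomorphism for $0\le\theta\le1$ by \eqref{stima diffeo psi basic}: writing $(\Psi-I)f=\int_0^1(\a\,\pa_t+\b\,\pa_x)(f\circ\psi_\theta)\,d\theta$ extracts the extra factor $\e^3$ as exactly $|\a|+|\b|$. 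The projected operators $\tilde\Psi=\pp\Psi\pp$, $\tilde\Psi\inv=\pp\Psi\inv\pp$ differ from $\Psi,\Psi\inv$ by the finite-rank, arbitrarily smoothing commutators $[\Pi_C,\Psi]$, $[\Pi_C,\Psi\inv]$ of \eqref{[PiC,Psi]}, whose kernels are polynomials in $(\tilde\a',\tilde\b_y)$ (resp.\ $(\a',\b_x)$) vanishing at the origin, hence $O(\e^3)$, with smoothing produced by integration by parts. For $\tilde\mM,\tilde\mM\inv$: $\a'=O(\e^3)$ and $\Psi\inv$ preserves this smallness, so $m=(1+\Psi\inv\a')\inv=1+O(\e^3)$ by Moser, and \eqref{formula tilde mM inv} exhibits $\tilde\mM\inv$ as a product plus a finite-rank term, yielding \eqref{stima mM-I} and its derivative versions.

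\medskip

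Finally the coefficients of $\mL_3$ and the remainders. Each of $a_6,\dots,a_9$ in \eqref{mL3} is $\Psi\inv$ applied to a rational expression in $a_1,\dots,a_5,\a',\b_t,\b_x,\b_{xx}$ with denominator $1+\a'$; substituting the bounds above, using $a_3=\e^2 3\bar v^2+O(\e^3)$, $a_5=\e^2(3\bar v^2)_x+O(\e^3)$, $\a',\b_t,\b_x,\b_{xx}=O(\e^3)$, together with $\Psi\inv(\e^2 3\bar v^2)=\e^2 3\bar v^2+O(\e^3)$ (from the ``$-I$'' estimate), gives \eqref{a 6789 epsilon}; the parities \eqref{a 6789 parity} have already been recorded. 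The remainder $\mR_\mH=\Psi\inv\mH\Psi-\mH$ is estimated by Lemma~\ref{lemma:commutators}$(iii)$ --- smoothing in $x$, bounded in $t$, with $\e^3$-smallness inherited from $\b$ --- which is precisely \eqref{stima mRmH}; and $\mR_1$ (see \eqref{mR1}), $\mR_2=(1+\Psi\inv\a')\inv\mR_1$, $\mR_3=\mR_2-\tilde\mM\inv\mM\Pi_C\mL_2$ are assembled from $\mR_\mH$ (multiplied by coefficients and composed with at most two $\pa_y$), the finite-rank commutator $[\Pi_C,\Psi]$, and the finite-rank $\Pi_C$, so \eqref{stima mRmH}, the algebra estimate, and the coefficient and $\tilde\mM\inv$ bounds already obtained give \eqref{stima mR123} with the stated loss of derivatives. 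The main obstacle is the bookkeeping itself: one must track simultaneously and uniformly the tame structure (splitting each product or composition as ``low norm of one factor times high norm of the other'') and the correct power of $\e$. The fact that $\a,\b,\tilde\a,\tilde\b$ --- and hence $\Psi-I$, $\tilde\mM-I$, $\mR_\mH$, and the $\mR_i$ --- are all $O(\e^3)$ and not merely $O(\e^2)$ rests on $a_1,a_2,a_4$ being $O(\e^3)$ in Proposition~\ref{prop:a12345} (ultimately via \eqref{g order 4}) and on peeling off the explicit $\e^2$-main parts $\e^2 3\bar v^2$ of $a_3$ and $\e^2(3\bar v^2)_x$ of $a_5$; this gain is exactly what will later balance the $\e$-powers in the Nash--Moser iteration, so it has to be carried through at every layer. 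The only genuinely analytic input --- as opposed to bookkeeping --- is the smoothing of $\mH$ conjugated by a diffeomorphism, which is quoted as Lemma~\ref{lemma:commutators}.
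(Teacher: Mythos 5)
Your proposal follows the same route as the paper's proof in Appendix C: Moser composition estimates for the scalar quantities $\rho,\mu_2,\a,\b,\mu_1,\g$ built from $a_1,\dots,a_5$, the tame change-of-variable and commutator lemmas from Appendix B for $\Psi,\tilde\mM$ and the rests, the one-parameter family $\psi_\lambda$ to extract the $\e^3$ factor in $(\Psi-I)f$, and the assembly of $\mR_1,\mR_2,\mR_3$ from $\mR_\mH$ and the finite-rank terms $[\Pi_C,\Psi]$, $\Pi_C$. One small detail to make explicit: Lemma \ref{lemma:commutators}(iii) is stated only for the space-only change of variables $B$, so before invoking it for $\mR_\mH=\Psi\inv\mH\Psi-\mH$ one should first factor $\Psi=AB$ (time change $A$ followed by space change $B$) and use part (ii), $A\inv\mH A=\mH$, to reduce to $B\inv\mH B-\mH$, exactly as the paper does.
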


\begin{proof} 
In Section \ref{sec:appendix proofs}.
\end{proof}

\begin{remark} \label{rem:loss of 1 for Psi-I}
The loss of one derivative for the difference $\Psi-I$ in \eqref{stima Psi-I},\eqref{stima Psi-I infty}  
is typical of any change of variables: in general, if we want to estimate a difference $h(x+p(x)) - h(x)$ with a factor of size $p$, 
we can do nothing but making a derivative, $h(x+p(x)) - h(x) \simeq h'(x) p(x)$.
\end{remark}

\subsection{Descent method: conjugation with pseudo-differential operators}
\label{subsec:descent}

We construct an invertible linear operator $\tilde \Phi = \pp \Phi \pp$ that maps $X_0 \to X_0$ and $Y \to Y$ 
and conjugates $\tilde\mL_3$ to a new operator 
\begin{equation} \label{mL4}
\tilde \mL_4 := \tilde \Phi\inv \tilde \mL_3 \tilde \Phi 
= \pp \mL_4 \pp, \quad \mL_4 = \mD + \mR, 
\end{equation}
where $\mD$ has constant coefficients and the remainder $\mR$ is regularizing in space, bounded in time. 
We look for $\mD$ of the form 
\[
\mD = \om \patau + \mu_2 \pa_{yy} \mH + \mu_1 \pa_y + \nu_0' + \nu_0 \mH 
+ (\nu_{-1}' + \nu_{-1} \mH) \pa_y\inv 
+ (\nu_{-2}' + \nu_{-2} \mH) \pa_y^{-2},
\]
where $\mu_2,\mu_1$ are the constants calculated in the previous section,  
$\nu_k, \nu_k'$, $k=0,-1,-2$ are constants to be determined. 
We look for $\Phi$ such that $(\pp \mL_3 \pp)(\pp \Phi \pp) - (\pp \Phi \pp)(\pp \mD \pp)$ is an operator of order $\leq -3$ in $y$. 
Write $\Phi$ as   
\[
\Ph = \Ph_0 + \Ph_1 + \Ph_2 + \Ph_3, 
\quad 
\Ph_k = (\a\k + \mH \b\k) \partial_y^{-k}, 
\quad 
k=0,1,2,3,
\]
namely $\Ph_k h = \a\k \partial_y^{-k} h + \mH (\b\k \partial_y^{-k} h)$,  
where $\a\k(\t,y)$, $\b\k(\t,y)$ are functions to be determined. 
$\Phi$ is close to the identity if $\a\zero$ is close to 1 and all the other $\a\k, \b\k$ are small.

Calculate and write the terms of order $1,0,-1,-2$ in $y$, and move all the `$\mH$' on the left-hand side, introducing the corresponding commutators (for example, write $\a \mH$ as $\mH \a + [\a,\mH]$).  
Note that 
\[
\mH^2 = \mH \mH = -\Pi_E = -I + \pepe, \qquad 
\pepe := I - \Pi_E = \Pi_T + \Pi_C.
\]
$\pepe$ is regularizing in $y$ because it is the operator that takes the mean of a function with respect to $y$. Therefore, up to a regularizing rest, sums and products of terms of the type $(\a+\mH\b)$ follow the same algebraic rules as those of complex numbers, where the role of $i$ is played by $\mH$. 
As a consequence, to perform the calculations up to terms containing $\pepe$ or commutators with $\mH$ it is comfortable to introduce the complex notation: 
\[ 
\begin{cases}
f\k := \a\k + i \b\k, \quad 
\mL_3 = \om \pa_\t + \mu_2 i \pa_{yy} + a_{76} \pa_y + a_{98} + \mR_3, 
\quad
a_{76} := a_7 + i a_6, \quad
a_{98} := a_9 + i a_8, 
\\
\mD = \om \pa_\t + \mu_2 i \pa_{yy} + \mu_1 \pa_y + c_0 + c_{-1} \pa_y\inv 
+ c_{-2} \pa_y^{-2}, 
\quad
c_{-k} := \nu_{-k}' + i \nu_{-k}, 
\\
\text{where $i$ means $\mH$.}
\end{cases}
\]
We stress that this is only a notation, as $\mH$ maps real-valued functions into real-valued functions, and therefore $\a + \mH \b$ is real when $\a,\b$ are real.
Straightforward calculations (use $\pp = I - \Pi_C$ for $a_9$) give 
\begin{equation} \label{commu pseudo}
\tilde \mL_3 \tilde \Phi - \tilde \Phi \tilde \mD 
= \pp (T_1 \pa_y + T_0 + T_{-1} \pa_y\inv + + T_{-2} \pa_y^{-2} + \mR_4) \pp,
\end{equation} 
where the coefficients $T_k$ are 
\begin{align}
T_1 & = Q f\zero, 
&
T_{-1} & = Q f\due + S f\uno - c_{-1}\,f\zero, \notag
\\
T_0 & = Q f\uno + S f\zero, \label{T10-1} 
&
T_{-2} & = Q f\tre + S f\due - c_{-1}\,f\uno - c_{-2}\,f\zero, 
\end{align}
$Q,S$ mean 
\[
Q f := 2i\mu_2 f_y + (a_{76}\,-\nu)\, f, \quad
S f := (\mL_3 - \mR_3 - c_0)f 
= \om f_\t + i \mu_2 f_{yy} + a_{76} f_y + (a_{98} - c_0) f,
\]  
and the rest $\mR_4$ is the sum $\mR_3 \pp \Phi - a_9 \Pi_C \Phi$ $+$  terms of order $\pa_y^{-3}$ $+$ other regularizing terms that
\begin{itemize}
\item[(a)] contain a commutator $[g,\mH]$, where $g \in \{a_j,\a\k,\b\k: j=6,7,8,9, \ k=0,1,2,3\}$; or
\item[(b)] contain $\pepe$.
\end{itemize}
The complete formula for $\mR_4$ is in Appendix \ref{sec:appendix proofs}.
For example, typical terms are
\[
\pepe \b\zero \pa_y^2, \quad 
a_6 \pepe \b\uno_y \pa_y^{-1}, \quad
[a_6,\mH] \a\zero_y, \quad
[\b\uno,\mH]\pa_y.
\]

Now we choose $\nu_i, \a\k, \b\k$ such that all $T_n$, $n=1,0,-1,-2$, vanish. Every $T_n$ is an operator of the form 
$T_n h = p_n h + \mH (q_n h)$ for some functions $p_n(\t,y), q_n(\t,y)$. 
Thus $T_n = 0$ if 
\begin{equation} \label{system Tn}
p_n = 0, \quad q_n = 0.
\end{equation} 
To solve \eqref{system Tn}, which is a system of two equations in the real-valued unknowns $\a\k, \b\k$, we use complex notation again.  
Consider the complex-valued unknown $f\k = \a\k + i \b\k$, where now $i$ is the standard imaginary unit of $\C$. 
Then the real system \eqref{system Tn} is equivalent to the complex ODE 
$Q f\zero = 0$ for $n=1$, and similar complex equations for $n=0,-1,-2$, according to \eqref{T10-1}. 
Hence we look for complex-valued solutions $f\k$ of the four complex equations $T_n=0$, $n=1,0,-1,-2$.

\medskip

\emph{Reduction of $T_1$. --- } 
Let 
\[
a_{76}^E(\t,y) := a_{76}(\t,y) - \mu_1 = a_7(\t,y) - \mu_1 + i a_6(\t,y).
\]
Remember that $a_7 - \nu, a_6 \in Z_E$ (see \eqref{trucco magico},\eqref{a7 mu1}).
$T_1 = 0$ if
\begin{equation} \label{complex ode T1}
Q f\zero = 2i \mu_2 f\zero_y + a_{76}^E(\t,y) \,f\zero = 0.
\end{equation}
The solutions of \eqref{complex ode T1} are the exponentials $f\zero = \exp(\ph)$, where $\ph(\t,y)$ satisfies 
\begin{equation} \label{reason}
2 i \mu_2 \ph_y + a_{76}^E(\t,y) = 0.
\end{equation}
\eqref{reason} determines the $Z_E$-component of $\ph$, 
\begin{equation*} 
(\Pi_E \ph)(\t,y) = \frac{i}{2\mu_2}\,(\pa_y\inv a_{76}^E)(\t,y) 
= -\frac{1}{2\mu_2}\, (\pa_y\inv a_6)(\t,y) + i \frac{1}{2\mu_2}\, (\pa_y\inv \Pi_E a_7)(\t,y).
\end{equation*}

\medskip

\emph{Reduction of $T_0$. --- } 
Since $f\zero = \exp(\ph)$, 
\begin{equation} \label{S f zero}
S f\zero = f\zero g\zero, \quad 
 g\zero := \om \ph_\t + i \mu_2 (\ph_y^2 + \ph_{yy}) + a_{76} \ph_y + (a_{98}-c_0).
\end{equation}
Moreover
\[
i \mu_2 \ph_y^2 + a_{76} \ph_y = \frac{i}{4\mu_2}\, (a_{76}^E)^2 + \frac{i}{2\mu_2}\, \nu\, a_{76}^E
\]
by \eqref{reason} and because $a_{76} = a_{76}^E + \nu$.
Since $Q f\zero = 0$, we solve the equation $T_0 = 0$ by variation of constants: $f\uno = \eta\uno f\zero$ is a solution of $T_0 = Q f\uno + S f\zero = 0$ if $\eta\uno$ solves
\begin{equation} \label{eta uno y}
2i \mu_2 \, \eta\uno_y + g\zero = 0. 
\end{equation}
\eqref{eta uno y} has a periodic solution $\eta\uno$ if $g\zero \in Z_E$. 
The condition 
\[
\Pi_C(g\zero) = \frac{i}{4\mu_2}\, \Pi_C((a_{76}^E)^2) + \Pi_C(a_{98}) - c_0 = 0
\]
determines the constant $c_0$,
\begin{equation*} 
c_0 = \frac{i}{4\mu_2}\, \Pi_C((a_{76}^E)^2) + \Pi_C(a_{98}) \ \in \C.
\end{equation*} 
The condition 
\[
\Pi_T(g\zero) = \om (\Pi_T \ph)_\t 
+ \frac{i}{4\mu_2}\, \Pi_T((a_{76}^E)^2) + \Pi_T(a_{98}) = 0 
\]
determines the $Z_T$-component of $\ph$, 
\begin{equation*} 
(\Pi_T \ph)(\t) = 
- \frac{i}{4\mu_2 \om}\, (\pa_\t\inv \Pi_T (a_{76}^E)^2)(\t) 
- \frac{1}{\om} \, (\pa_\t\inv \Pi_T a_{98})(\t) \ \in Z_T.
\end{equation*}
So $g\zero \in Z_E$, \eqref{eta uno y} can be solved, and the $Z_E$-component of $\eta\uno$ is determined, 
\begin{equation} \label{eta uno E}
(\Pi_E \eta\uno)(\t,y)  = \frac{i}{2\mu_2}\, (\pa_y\inv g\zero)(\t,y) \ \in Z_E.
\end{equation} 

\medskip

\emph{Reduction of $T_{-1}$. --- } 
Since $f\uno = \eta\uno f\zero$, $S f\zero = f\zero g\zero$, by \eqref{reason} and the definition of $S$,
\[
S f\uno - c_{-1}\,f\zero
= \eta\uno S f\zero  
+ \eta\uno_y \big[ 2i\mu_2 f\zero_y + a_{76} f\zero \big] 
+ f\zero \big[ \om \eta\uno_\t + i \mu_2 \eta\uno_{yy} - c_{-1} \big]
\ = f\zero g\uno, 
\]
where
\begin{equation} \label{g uno}
g\uno := \eta\uno g\zero + \om \eta\uno_\t + i \mu_2 \eta\uno_{yy} + \mu_1 \eta\uno_y - c_{-1} .
\end{equation}
By variation of constants, $f\due = \eta\due f\zero$ is a solution of $T_{-1} = Q f\due + S f\uno - c_{-1}\,f\zero = 0$ if $\eta\due$ solves
\begin{equation} \label{eta due y}
2i \mu_2 \, \eta\due_y + g\uno = 0. 
\end{equation}
\eqref{eta due y} has a periodic solution $\eta\due$ if $g\uno \in Z_E$.
By \eqref{eta uno y}, $g\zero = -2i \mu_2 \,\eta\uno_y$, therefore 
\[
\eta\uno g\zero = -2i \mu_2 \, \eta\uno \,\eta\uno_y 
= -i \mu_2 \pa_y \{(\eta\uno)^2\} 
\ \in Z_E.
\]
As a consequence, the condition $g\uno \in Z_E$ determines
\begin{equation} \label{eta uno T}
\Pi_T (\eta\uno) = 0, \quad 
c_{-1} = 0.
\end{equation}
Thus \eqref{eta due y} can be solved, and the $Z_E$-component of $\eta\due$ is determined,
\begin{equation} \label{eta due E}
(\Pi_E \eta\due)(\t,y) = \frac{i}{2\mu_2}\, (\pa_y\inv g\uno)(\t,y).
\end{equation}

\medskip

\emph{Reduction of $T_{-2}$. --- } 
Since $c_{-1} = 0$, $T_{-2} = Q f\tre + S f\due - c_{-2}\,f\zero$.
By the same calculations as above,
\[
S f\due - c_{-2}\,f\zero
= \eta\due S f\zero  
+ \eta\due_y \big[ 2i\mu_2 f\zero_y + a_{76} f\zero \big] 
+ f\zero \big[ \om \eta\due_\t + i \mu_2 \eta\due_{yy} - c_{-2} \big]
\ = f\zero g\due, 
\]
where
\begin{equation} \label{g due}
g\due := \eta\due g\zero + \om \eta\due_\t + i \mu_2 \eta\due_{yy} 
+ \mu_1 \eta\due_y - c_{-2} .
\end{equation}
By variation of constants, $f\tre = \eta\tre f\zero$ is a solution of $T_{-2} = Q f\tre + S f\due - c_{-2}\,f\zero = 0$ if $\eta\tre$ solves
\begin{equation} \label{eta tre y}
2i \mu_2 \, \eta\tre_y + g\due = 0. 
\end{equation}
\eqref{eta tre y} has a periodic solution $\eta\tre$ if $g\due \in Z_E$. 
Both $(\Pi_T \eta\due) g\zero$ and $(\Pi_C \eta\due) g\zero$ belongs to $Z_E$ because $g\zero \in Z_E$. Hence 
\[
\Pi_T (\eta\due g\zero) 
= \Pi_T [ (\Pi_C \eta\due) g\zero + (\Pi_T \eta\due) g\zero + (\Pi_E \eta\due) g\zero] 
= \Pi_T [ (\Pi_E \eta\due) g\zero],
\]
and the same for $\Pi_C(\eta\due g\zero)$. 
$\Pi_E \eta\due$ is given by \eqref{eta due E}. 
The condition $\Pi_T g\due = 0$ determines 
\begin{equation} \label{eta due T}
\Pi_T \eta\due = - \frac{1}{\om}\, \pa_\t\inv \Pi_T [ (\Pi_E \eta\due) g\zero],
\end{equation}
the condition $\Pi_C g\due = 0$ determines 
\[
c_{-2} = \Pi_C[ (\Pi_E \eta\due) g\zero].
\]
Thus $g\due \in Z_E$, \eqref{eta tre y} can be solved, and the $Z_E$-component of $\eta\tre$ is determined, 
\begin{equation}  \label{eta tre E}
(\Pi_E \eta\tre)(\t,y) = \frac{i}{2\mu_2} \, (\pa_y\inv g\due)(\t,y).
\end{equation}

The only terms that have not been determined by the four equations $T_1 = 0, \ldots, T_{-2}=0$ are $\Pi_C(\ph)$, $\Pi_C(\eta\uno)$,  $\Pi_C(\eta\due)$, $\Pi_C(\eta\tre)$, and $\Pi_T(\eta\tre)$. 
Fix all of them to be $0$. 
Split real and imaginary part, 
\begin{align}
\label{formula Re ph}
& \Re(\ph) = \frac{1}{2\mu_2 \om}\,\pa_\t\inv \Pi_T[(\Pi_E a_7) a_6] 
- \frac{1}{\om} \,\pa_\t\inv \Pi_T(a_9)  
-\frac{1}{2\mu_2}\, (\pa_y\inv a_6), 
\\
\label{formula Im ph}
& \Im(\ph) = -\frac{1}{4\mu_2\om}\, \pa_\t\inv \Pi_T[(\Pi_E a_7)^2 - (a_6)^2]
- \frac{1}{\om} \,\pa_\t\inv \Pi_T(a_8)  
+ \frac{1}{2\mu_2}\, (\pa_y\inv \Pi_E a_7),
\\
\label{formula alfa beta zero}
& \a\zero = e^{\Re(\ph)}\,\cos(\Im(\ph)), \quad 
\b\zero = e^{\Re(\ph)}\,\sin(\Im(\ph)).
\end{align}
By \eqref{a 6789 parity},
\begin{equation*} 
\Re(\ph) \in X, \quad \Im(\ph) \in Y, \quad 
\a\zero \in X, \quad \b\zero \in Y.
\end{equation*}
As a consequence, $g\zero, \eta\uno, g\due, \eta\tre \in Y+iX$, 
$g\uno, \eta\due \in X+iY$, 
and  
\begin{equation*} 
\a\uno \in Y, \quad \b\uno \in X, \quad 
\a\due \in X, \quad \b\due \in Y, \quad
\a\tre \in Y, \quad \b\tre \in X.
\end{equation*}
Hence $\Phi$ preserves the parity, namely $\Phi$ maps $X \to X$ and $Y \to Y$.

By \eqref{a 6789 parity}, $(\Pi_E a_7) a_6 \in Y$, $a_9 \in Y$, therefore 
\begin{equation} \label{nu nu' 0}
\nu_0' = \Re(c_0) = 0,
\quad 
\nu_0 = \Im(c_0) = \frac{1}{4\mu_2} \, \Pi_C[(\Pi_E a_7)^2 - a_6^2] + \Pi_C(a_8). 
\end{equation}
$\nu_{-1} = \nu_{-1}' = 0$, and 
\begin{equation} \label{nu nu' -2}
\nu_{-2}' = \Re(c_{-2}) = 0,
\quad 
\nu_{-2} = \Im(c_{-2}) = \Im \{ \Pi_C[ (\Pi_E \eta\due) g\zero] \}.
\end{equation}
Put 
\[
\mu_0 := \nu_0, \quad \mu_{-2} := \nu_{-2}.
\]
Since $T_1,T_0, T_{-1},T_{-2}$ vanish, \eqref{commu pseudo} becomes $\tilde \mL_3 \tilde \Phi - \tilde \Phi \tilde \mD = \pp \mR_4 \pp$, and \eqref{mL4} holds with 
\begin{equation} \label{DR}
\mL_4 = \mD + \mR, \quad 
\mD = \om \pa_\t + \mu_2 \mH \pa_{yy} + \mu_1 \pa_y + \mu_0 \mH 
+ \mu_{-2} \mH \pa_{y}^{-2},
\quad
\mR := \tilde\Phi\inv \pp \mR_4 .
\end{equation}
If $\tilde\Phi$ is invertible, we have transformed $\tilde\mL$ into 
$\tilde\mL_4$, namely
\begin{equation} \label{catenella}
\tilde\mL = \tilde\Psi \tilde\mM \tilde\Phi \tilde\mL_4 \tilde\Phi\inv \tilde\Psi\inv,
\quad
\tilde\mL_4 
= \tilde\Phi\inv \tilde\mM\inv \tilde\Psi\inv \tilde\mL \tilde\Psi \tilde\Phi.
\end{equation}
From the formulae above, $\mu_0, \mu_{-2}, \a\k, \b\k$ are $C^1$ functions of $(u,\e)$, and the following estimates hold.

\begin{proposition} \label{prop:Phi mR}
Let $K > 0$. There exists $\e_0 \in (0,1)$, depending on $K$, 
such that, if $\e \in (0,\e_0)$, 
$\| u \|_{19} \leq K$, 
and $\|u\|_4,\e_0$ satisfy \eqref{pallata}, 
then all the following inequalities hold. 
\begin{align} 
\label{stima mu0}
|\mu_0| 
& \leq \e^3 C(K), 
&
|\pa_u \mu_0[h]| 
& \leq \e^4 C(K) \|h\|_5, 
&
|\pa_\e \mu_0| 
& \leq \e^2 C(K),
\\
\label{stima mu-2}
| \mu_{-2}| 
& \leq \e^4 C(K), 
&
| \pa_u \mu_{-2}[h]| 
& \leq \e^6 C(K) \|h\|_{12}, 
&
| \pa_\e \mu_{-2}| 
&\leq \e^3 C(K).
\end{align}
The operator $\tilde\Phi : Z_0 \to Z_0$ is invertible, and maps $X_0 \to X_0$ and $Y \to Y$. 
$\tilde\Phi, \tilde\Phi\inv$ satisfy
\begin{align} 
\label{stima Phi-I}
\| (\tilde\Phi - I) f \|_s + \| (\tilde\Phi\inv - I) f \|_s  
& \leq \e^2 C(s,K) (\| f \|_s + \|u\|_{s+12} \| f \|_2)
\quad \forall f \in Z_0,
\end{align}
for all $2 \leq s \leq r-7$.
The derivatives of $\tilde\Phi f, \tilde\Phi\inv f$ with respect to $u$ in the direction $h$ and with respect to $\e$ satisfy
\begin{align} 
\label{stima Phi der u}
\| \pa_u (\tilde\Phi f)[h] \|_s + \| \pa_u (\tilde\Phi\inv f)[h] \|_s  
& \leq \e^4 C(s,K) (\| f \|_s \| h \|_{14} + \| f \|_2 \| h \|_{s+12} 
+ \|u\|_{s+12} \| f \|_2 \| h \|_{14}), 
\\
\label{stima Phi der epsilon}
\| \pa_\e \tilde\Phi f \|_s + \| \pa_\e \tilde\Phi\inv f \|_s  
& \leq \e C(s,K) (\| f \|_s + \|u\|_{s+12} \| f \|_2).
\end{align}
Moreover 
\begin{align} 
\label{Phi-I for bif 1}
\| \pa_\t (\tilde\Phi-I) f \|_s 
& \leq \e^2 C(s,K) \big( \|\pa_\t f \|_s + \| f \|_s 
+ \| u \|_{s+13}(\|\pa_\tau f \|_2 + \| f \|_2 ) \big), 
\\
\label{Phi-I for bif 2}
\| \pa_y^k (\tilde\Phi-I) f \|_s 
& \leq \e^2 C(s,K) \big( \|\pa_y^k f \|_s + \| f \|_s 
+ \| u \|_{s+14}(\|\pa_y^k f \|_2 + \| f \|_2 ) \big), 
\quad k=1,2,
\end{align}
for $2 \leq s \leq r-9$, for all $f \in Z_0$. 

The operators $\tilde\Psi \tilde\Phi$, 
$\tilde\Psi \tilde\mM \tilde\Phi$, 
$\tilde\Phi\inv \tilde\Psi\inv$,
$\tilde\Phi\inv \tilde\mM\inv \tilde\Psi\inv$
are all of the type 
$I+S$, where $S$ satisfies 
\begin{equation} \label{S property}
\| Sf \|_s \leq \e^2 C(s,K) (\| f \|_{s+1} + \| u \|_{s+12} \| f \|_2),
\quad 
2 \leq s \leq r-7.
\end{equation}

The rest $\mR$ satisfies
\begin{equation} \label{stima mR}
\|\mR \pa_y^m f \|_s 
\leq \e^2 C(s,K) (\| f \|_s + \| u \|_{s+17} \| f \|_2), 
\quad 0 \leq m \leq 3, 
\quad 2 \leq s \leq r-12.
\end{equation}
\end{proposition}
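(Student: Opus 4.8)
The plan is to propagate the bounds of Propositions \ref{prop:a12345} and \ref{prop:Psi mM mR3} through the explicit formulae \eqref{complex ode T1}--\eqref{nu nu' -2} that define the phase $\ph$, the functions $\eta\uno,\eta\due,\eta\tre$, the coefficients $\a\k,\b\k$ of $\Phi$, the constants $\mu_0=\nu_0$ and $\mu_{-2}=\nu_{-2}$, and the rest $\mR=\tilde\Phi\inv\pp\mR_4$ in \eqref{DR}. The only analytic inputs needed are the tame product and composition estimates and the commutator estimates with $\mH$ of Appendix \ref{sec:Appendix B. Tame estimates} (in particular Lemma \ref{lemma:commutators}), together with the elementary facts that $\pa_\t\inv,\pa_y\inv,\Pi_C,\Pi_T,\Pi_E,\mH$ are bounded on every $H^s$ and on every $W^{s,\infty}$, and that $|\mu_2-1|\le\e^3C(K)$ by \eqref{stima mu2} while $\om=1+3\e^2$, so that all the scalar factors $\mu_2\inv,(\mu_2\om)\inv$ occurring in the construction are uniformly bounded. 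The stronger regularity hypothesis $\|u\|_{19}\le K$ (compared to $\|u\|_8\le K$ and $\|u\|_4\le K$ in the two previous propositions) accounts for the extra derivatives spent in the present step.

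\textbf{The constants and the phase.} By \eqref{formula Re ph}--\eqref{formula Im ph}, $\Re(\ph)$ and $\Im(\ph)$ are obtained by applying $\pa_\t\inv\Pi_T$ and $\pa_y\inv$ to products of $a_6,\dots,a_9$ and of their projections. By \eqref{a 6789 epsilon} one has $a_6,a_8=O(\e^3)$, while $\Pi_E a_7=a_7-\mu_1=O(\e^2)$ (using $\Pi_T a_7=0$, see \eqref{trucco magico}, \eqref{a7 mu1}, and \eqref{stima mu1}); hence a tame product estimate gives $|\ph|_s+\|\ph\|_s\le\e^2C(s,K)(1+\|u\|_{s+c})$ for a fixed $c$ and $s$ in the allowed range, and likewise for $\pa_u\ph,\pa_\e\ph$ by differentiating and using \eqref{stima mu1}, \eqref{a 6789 epsilon}. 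Substituting into \eqref{nu nu' 0}, the term $\Pi_C(a_8)=O(\e^3)$ dominates $\tfrac{1}{4\mu_2}\Pi_C[(\Pi_E a_7)^2-a_6^2]=O(\e^4)$, so $|\mu_0|\le\e^3C(K)$. For \eqref{nu nu' -2}, tracing \eqref{S f zero}, \eqref{eta uno y}--\eqref{eta uno E}, \eqref{g uno}, \eqref{eta due y}--\eqref{eta due E} shows that $g\zero,\eta\uno,g\uno,\Pi_E\eta\due$ are all $O(\e^2)$, so $\mu_{-2}=\Im\Pi_C[(\Pi_E\eta\due)g\zero]=O(\e^4)$. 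The derivative bounds in \eqref{stima mu0}--\eqref{stima mu-2} follow by differentiating the same expressions.

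\textbf{The operator $\tilde\Phi$.} From $\a\zero=e^{\Re\ph}\cos(\Im\ph)$, $\b\zero=e^{\Re\ph}\sin(\Im\ph)$ and the composition estimates of Appendix \ref{sec:Appendix B. Tame estimates} (applied to the entire functions $\exp,\cos,\sin$, whose arguments are $O(\e^2)$ and small in low norm), $\a\zero-1$ and $\b\zero$ are $O(\e^2)$; then by \eqref{eta uno E}, \eqref{g uno}, \eqref{eta due E}, \eqref{g due}, \eqref{eta tre E} the functions $\eta\uno,\eta\due,\eta\tre$, and hence $\a\k,\b\k$ for $k=1,2,3$, are $O(\e^2)$ in low norm, with tame dependence on $u$ and a fixed total derivative loss that fixes the indices $s+12,s+13,s+14$ in \eqref{stima Phi-I}--\eqref{Phi-I for bif 2}. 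Therefore $\Phi-I=\sum_{k=0}^{3}(\a\k-\delta_{k0}+\mH\b\k)\pa_y^{-k}$ is an operator of order $\le0$ of size $O(\e^2)$, which gives \eqref{stima Phi-I} for $\tilde\Phi=\pp\Phi\pp$; since the operator norm of $\tilde\Phi-I$ on $H^2$ is $<1/2$ for $\e$ small, $\tilde\Phi$ is invertible on $Z_0$ by Neumann series, and the bound for $\tilde\Phi\inv-I$ follows from that for $\tilde\Phi-I$ in the standard way. The mapping property $X_0\to X_0$, $Y\to Y$ is the parity statement already recorded before the proposition. Bounds \eqref{stima Phi der u}--\eqref{stima Phi der epsilon} come from differentiating the coefficients of $\Phi$ and using the $C^1$-bounds just obtained; \eqref{Phi-I for bif 1}--\eqref{Phi-I for bif 2} follow because $\pa_\t$ and $\pa_y$ commute with the Fourier multipliers $\pa_y^{-k},\mH$, so applying them to $(\tilde\Phi-I)f$ either hits $f$ or differentiates one coefficient once. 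Finally, each of $\tilde\Psi\tilde\Phi$, $\tilde\Psi\tilde\mM\tilde\Phi$, $\tilde\Phi\inv\tilde\Psi\inv$, $\tilde\Phi\inv\tilde\mM\inv\tilde\Psi\inv$ is $I$ plus a sum of products of $\tilde\Psi-I,\tilde\mM-I,\tilde\Phi-I$ and of the corresponding inverses, so combining \eqref{stima Psi-I} (loss of one $y$-derivative, size $\e^3$), \eqref{stima mM-I} (no loss, size $\e^3$) and \eqref{stima Phi-I} (no extra loss, size $\e^2$) yields \eqref{S property}.

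\textbf{The rest $\mR$.} This is the delicate point. By construction, $\mR_4$ (whose full expression is in Appendix \ref{sec:appendix proofs}) is a finite sum of four kinds of terms: $(a)$ $\mR_3\pp\Phi-a_9\Pi_C\Phi$, controlled by \eqref{stima mR123}, which gains arbitrarily many $y$-derivatives at the price of a fixed number of $u$-derivatives, composed with the bounded operator $\Phi$; $(b)$ the genuine terms of order $\pa_y^{-3}$ produced once $T_1=T_0=T_{-1}=T_{-2}=0$, whose coefficients are products of the $O(\e^2)$ quantities $\a\k,\b\k,\eta\k,a_{76},a_{98}$; $(c)$ terms $[g,\mH](\cdot)$ with $g\in\{a_6,\dots,a_9,\a\k,\b\k\}$, which by the commutator estimates of Appendix \ref{sec:Appendix B. Tame estimates} are smoothing to all orders in $y$, with operator norm controlled by a high Sobolev norm of $g=O(\e^2)$; $(d)$ terms containing $\pepe=\Pi_T+\Pi_C$, the $y$-averaging projection, which after integration by parts in $y$ is smoothing to all orders in $y$, again with $O(\e^2)$ coefficients. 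In every case only a bounded number of derivatives is lost, whence $\|\mR_4\pa_y^m f\|_s\le\e^2C(s,K)(\|f\|_s+\|u\|_{s+c'}\|f\|_2)$ for $0\le m\le3$ and a fixed $c'$; composing on the left with $\tilde\Phi\inv=I+O(\e^2)$, which is bounded by the previous step, gives \eqref{stima mR} with the index $s+17$ and the stated range of $s$. The main obstacle is precisely this bookkeeping: identifying which high norm of $u$ controls each piece of $\mR_4$ — so as to justify the explicit loss $s+17$ and the upper restriction $s\le r-12$ — requires carefully tracking the derivative losses accumulated in Propositions \ref{prop:a12345} and \ref{prop:Psi mM mR3}, in the construction of $\ph$, the $\eta\k$ and the $\a\k,\b\k$, and in the (to all orders, but norm-costly) smoothing of the commutator and $\pepe$ terms.
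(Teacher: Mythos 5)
Your proof sketch follows essentially the same route as the paper's: propagate the bounds on $a_6,\dots,a_9$ and $\mu_1,\mu_2$ through the explicit formulae for $\ph$, the $\eta\k$, and the $\a\k,\b\k$ (tracking the same cumulative derivative losses $c=6,8,10,12$), read off $\mu_0,\mu_{-2}$ from \eqref{nu nu' 0},\eqref{nu nu' -2}, prove invertibility of $\tilde\Phi$ by the tame Neumann series of Lemma \ref{lemma:tame estimates for operators}, obtain \eqref{Phi-I for bif 1}--\eqref{Phi-I for bif 2} by commuting $\pa_\t,\pa_y$ past the Fourier multipliers and into the coefficients, and treat $\mR_4$ by moving $\pa_y^m$ through $\Phi$, estimating commutator terms via Lemma \ref{lemma:commutators} and $\pepe$-terms by the $y$-averaging trick, before composing with $\tilde\Phi\inv$. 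The only real omission is the explicit bookkeeping fixing the loss $s+17$ and the range $s\le r-12$, which you flag rather than carry out; otherwise this is the paper's argument.
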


\begin{proof} The proof is in Section \ref{sec:appendix proofs}.
\end{proof}

\section{\large{Inversion of the transformed linearized operator}}
\label{sec:inversion}

In view of the Nash-Moser iteration, we invert $\tilde\mL_4 = \tilde\mD + \tilde\mR$ on a subspace of Fourier-truncated functions.
Let 
\[
Z_N := \Big\{ u = \sum_{|k|\leq N} u_k \, e_k \Big\} \subset Z, \quad 
k=(l,j) \in \Z^2, \quad 
|k| = |l|+|j|, \quad 
Z_{0N} := Z_0 \cap Z_N,
\]
with $N>0$ sufficiently large to have $\bar v \in Z_N$, namely 
$\mK \subseteq [-N,N]$, where $\mK$ is defined in Section \ref{sec:bifurcation} (see Proposition \ref{prop:bif}).
Let $\Pi_N, \Pi_N^\perp$ denote the orthogonal projections onto $Z_N$ and $Z_N^\perp$ respectively. 
Let 
\[
X_{0N} := X_0 \cap Z_N, \quad 
Y_N := Y \cap Z_N, \quad  
V_{0N} := V_0 \cap Z_N, \quad 
W_N := W \cap Z_N. 
\]
$\Pi_N \tilde\mL_4 \Pi_N$ 
maps $X_{0N} \to Y_N$ because $\tilde\mL_4 : X_0 \to Y$.
Since $Z_{0N} = V_{0N} \oplus W_N$, to prove that 
$\Pi_N \tilde\mL_4 \Pi_N : X_{0N} \to Y_N$ is invertible, we project on the subspaces $V_{0N}$ and $W_N$ (Lyapunov-Schmidt decomposition, like in Section \ref{sec:Lyapunov-Schmidt}): 
given $f \in Y_N$, 
\begin{equation} \label{sistema Lyapunov-Schmidt}
\Pi_N \tilde\mL_4 \Pi_N h = f \quad \iff \quad 
\bigg\{ 
\begin{array}{l} 
\!\! 
\Pi_{V_{0N}} \tilde\mL_4 \Pi_{V_{0N}} h 
+ \Pi_{V_{0N}} \tilde\mL_4 \Pi_{W_N} h 
= \Pi_{V_{0N}} f \vspace{3pt} \\
\!\! 
\Pi_{W_N} \tilde\mL_4 \Pi_{V_{0N}} h 
+ \Pi_{W_N} \tilde\mL_4 \Pi_{W_N} h 
= \Pi_{W_N} f.
\end{array}
\end{equation}
Since $\mD$ is diagonal, $\mD$  maps $V \to V$ and $W \to W$, therefore
\begin{equation} \label{bell'off-diag}
\Pi_{V} \tilde\mL_4 \Pi_{W} = \Pi_{V} \tilde\mR \Pi_{W}, \quad 
\Pi_{W} \tilde\mL_4 \Pi_{V} = \Pi_{W} \tilde\mR \Pi_{V}.
\end{equation}

\begin{lemma}[Inversion on $V_{0N}$]
\label{lemma:inversione su VN}
Let $K > 0$. There exists $\e_0 \in (0,1)$, depending on $K$, 
such that, if $\e \in (0,\e_0)$, 
$\| u \|_{19} \leq K$, 
and $\|u\|_4,\e_0$ satisfy \eqref{pallata}, 
then 
\[
\Pi_{V_{0N}} \tilde\mL_4 \Pi_{V_{0N}} : V_{0N} \cap X_0 \to V_{0N} \cap Y
\]
is invertible, with 
\begin{equation} \label{inv VLV}
\| (\Pi_{V_{0N}} \tilde\mL_4 \Pi_{V_{0N}})\inv h \|_s \leq 
\frac{C(s,K)}{\e^2} \, (\|h\|_{s-1} + \|u\|_{s+13}\, \|h\|_2), 
\quad 3 \leq s \leq r-8.
\end{equation} 
\end{lemma}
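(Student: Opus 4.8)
\emph{Plan of proof.} The idea is to separate the diagonal part of $\Pi_{V_{0N}}\tilde\mL_4\Pi_{V_{0N}}$ coming from the constant–coefficient operator $\tilde\mD$, to invert it using the non-degeneracy produced in Section \ref{sec:bifurcation}, and to treat the regularizing remainder $\tilde\mR$ perturbatively. First I would compute $D_V:=\Pi_{V_{0N}}\tilde\mD\Pi_{V_{0N}}$. Since $\mD=\om\pa_\t+\mu_2\mH\pa_{yy}+\mu_1\pa_y+\mu_0\mH+\mu_{-2}\mH\pa_y^{-2}$ is a Fourier multiplier, $D_V$ is diagonal on the basis $q_j=e_k$, $k=(-j|j|,j)\in\mV$, $j\neq0$, with
\[
D_V q_j=i\lambda_j q_j,\qquad
\lambda_j=(\mu_2-\om)\,j|j|+\mu_1 j-\mu_0\,\sgn(j)+\frac{\mu_{-2}}{j|j|}\,.
\]
Using $\om=1+3\e^2$ together with \eqref{stima mu2}, \eqref{stima mu1}, \eqref{stima mu0}, \eqref{stima mu-2} one rewrites $\lambda_j=3\e^2 j\,(\Pi_C(\bar v^2)-|j|)+O(\e^3 j^2)$, and the non-degeneracy inequality $|\Pi_C(\bar v^2)-|j||\geq\d|j|$ ($\forall j\neq0$) of Proposition \ref{prop:bif} gives, for $\e_0$ small enough (depending on $\d$, hence on $k_1,\dots,k_m$, hence on $K$),
\[
|\lambda_j|\geq 3\d\e^2 j^2-C(K)\e^3 j^2\geq 2\d\e^2 j^2\qquad\forall\,j\neq0.
\]
Since $\la k\ra=j^2+|j|$ satisfies $j^2\leq\la k\ra\leq 2j^2$ for $k=(-j|j|,j)$, this quadratic lower bound shows that $D_V:V_{0N}\cap X_0\to V_{0N}\cap Y$ is invertible (parity is automatic, $\mD$ mapping $X$ into $Y$ and $\lambda_{-j}=-\lambda_j$), that $D_V\inv$ gains one Sobolev derivative, $\|D_V\inv h\|_s\leq C(s)\e^{-2}\|h\|_{s-1}$, and that $\|D_V\inv h\|_s\leq C(s)\e^{-2}N_*^{-2}\|h\|_s$ whenever $h$ is supported on modes with $|j|\geq N_*$.

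Next I would write $\Pi_{V_{0N}}\tilde\mL_4\Pi_{V_{0N}}=D_V+\mR_V$, with $\mR_V:=\Pi_{V_{0N}}\tilde\mR\Pi_{V_{0N}}$, so that by \eqref{stima mR} and $\|u\|_{19}\leq K$ the operator $\mR_V$ is bounded on $H^2(V_{0N})$ with norm $\leq\e^2 C(K)$. The delicate point --- which I expect to be the main obstacle --- is that the $\e^{-2}$ of $D_V\inv$ exactly cancels the $\e^2$ smallness of $\mR_V$, so a plain Neumann series in a fixed norm does not close. I would get around this by splitting $V_{0N}=F\oplus G$, with $F$ the finite-dimensional span of the $q_j$ with $|j|\leq N_*$ (a threshold $N_*=N_*(K)$ to be fixed) and $G$ the tail $N_*<|j|\leq N$, and by a Schur-complement argument. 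On $G$ the smallness of $(D_V|_G)\inv$ (of size $\e^{-2}N_*^{-2}$) beats $\mR_V$, so $\|(D_V|_G)\inv\Pi_G\mR_V\Pi_G\|_{H^2(G)}\leq C(K)N_*^{-2}<1/2$ for $N_*$ large and $\Pi_G\tilde\mL_4\Pi_G$ is invertible by Neumann series, the gain of one derivative being preserved (here one uses also that $\mR$ has order $\leq-3$ in $y$, which upgrades the $\|u\|_{s+17}$ of \eqref{stima mR} to the $\|u\|_{s+13}$ of \eqref{inv VLV}). The Schur complement reduces invertibility of the full operator to that of the fixed-size matrix $A_{FF}-A_{FG}(\Pi_G\tilde\mL_4\Pi_G)\inv A_{GF}$ on $F$: here $A_{FG},A_{GF}$ are off-diagonal blocks of $\mR_V$, so the correction term is $O(\e^2)\cdot O(\e^{-2}N_*^{-2})\cdot O(\e^2)=O(\e^2 N_*^{-2})$, while, since $\Pi_V L=L\Pi_V=0$ and using $a_3=\e^2 3\bar v^2+O(\e^3)$, $a_5=\e^2(3\bar v^2)_x+O(\e^3)$ from \eqref{coeff a12345} and the fact that the conjugation corrections in \eqref{catenella} are $O(\e^4)$ (because $\tilde\Psi\tilde\mM\tilde\Phi=I+S$ with $S=O(\e^2)$ by \eqref{S property} and $\Pi_V L=0$), one obtains $A_{FF}=\e^2\Pi_F T\Pi_F+O(\e^3)$ with $T=3\pa_t+\Pi_V\pa_x(3\bar v^2\,\cdot\,)$ the unperturbed bifurcation operator. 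Proposition \ref{prop:bif} gives the invertibility of $T$ on $V_0$, and $T$ leaves $\mathrm{span}\{q_j:j\in\mK\}$ and its complement invariant, so $\Pi_F T\Pi_F$ is invertible and the Schur complement equals $\e^2\big(\Pi_F T\Pi_F+O(\e)+O(N_*^{-2})\big)$, invertible for $\e$ small and $N_*$ large.

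Finally I would reassemble the two blocks --- the inverse of a $2\times2$ block operator with invertible diagonal blocks and invertible Schur complement --- and track the Sobolev indices: the factor $\e^{-2}$ and the gain of one derivative come from $D_V\inv$ and $(\Pi_G\tilde\mL_4\Pi_G)\inv$, and the tame term $\|u\|_{s+13}\|h\|_2$ from the $\|u\|$-dependence in \eqref{stima mR} improved by the gains just described. This yields \eqref{inv VLV} and completes the proof.
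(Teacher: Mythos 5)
Your proof is correct in outline but takes a genuinely different route from the paper's, and the difference is instructive. The paper does \emph{not} invert $\Pi_{V_{0N}}\tilde\mL_4\Pi_{V_{0N}}$ by splitting it into $\tilde\mD+\tilde\mR$ on $V$. Instead it uses the intermediate operator $\tilde\mL_3=L+\e^2A+\e^3B$ (the output of the change of variables, \emph{before} the descent method), writes $\tilde\mL_4=\tilde\Phi\inv\tilde\mL_3\tilde\Phi$ with $\tilde\Phi=I+\e^2S_1$, $\tilde\Phi\inv=I+\e^2S_2$, and exploits $\Pi_V L=L\Pi_V=0$ to obtain the clean factorization
\[
\Pi_{V_{0N}}\tilde\mL_4\Pi_{V_{0N}}=\e^2\,\Pi_{V_{0N}}(A+\e B_1)\Pi_{V_{0N}},
\]
with $A$ the full (non-diagonal) bifurcation operator of Proposition~\ref{prop:bif}. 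Because $\Pi_V A\Pi_V$ and $\Pi_V B_1\Pi_V$ are \emph{both} $O(1)$ operators losing one $y$-derivative, the composed operator $B_2=(\Pi B_1\Pi)(\Pi A\Pi)\inv$ is bounded with \emph{no} loss, and the extra $\e$ prefactor makes $\e B_2$ genuinely small, so the Neumann series closes in one stroke, with no separation of scales. Your route, starting from $\tilde\mD+\tilde\mR$, forfeits this: on $V$ all eigenvalues of $\tilde\mD$ are $O(\e^2)$ (since $L\Pi_V=0$), i.e.\ the same size as $\tilde\mR$, so, as you correctly observe, a direct Neumann series fails. Your Schur-complement repair --- split $V_{0N}=F\oplus G$ at a threshold $N_*$, use the quadratic decay $|\lambda_j|\gtrsim\e^2 j^2$ to beat $\tilde\mR$ on $G$, then invert the finite block $F$ by reconstructing $\e^2\Pi_F T\Pi_F$ from the conjugation formula with $\tilde\Psi\tilde\Phi=I+O(\e^2)$ --- is a legitimate, self-contained alternative, and I checked that the leading-order identification $A_{FF}=\e^2\Pi_F T\Pi_F+O(\e^3 C(N_*))$ and the block-invariance of $T$ under the $\mK$/$\mK^c$ splitting (from the proof of Proposition~\ref{prop:bif}) do carry your argument through, provided you fix $N_*=N_*(K)$ before $\e_0$. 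The trade-off is that this path is considerably heavier, requires an extra spectral cut-off that the paper never needs, and (a minor quibble) your claim that \eqref{stima mR} ``upgrades $\|u\|_{s+17}$ to $\|u\|_{s+13}$'' via the $\pa_y^3$-smoothing of $\mR$ is not justified as stated: chasing your own estimates yields roughly $\|u\|_{s+16}$ rather than the $\|u\|_{s+13}$ of~\eqref{inv VLV}, which in the paper falls out naturally from composing the gain in $(\Pi A\Pi)\inv$ with the loss in $B_1$. That only shifts constants ($c_0,s_0,r_0$) downstream, but you should state the weaker index honestly rather than assert the paper's.
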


\begin{proof}
$\tilde\mL_4 = \tilde \Phi\inv \tilde \mL_3 \tilde \Phi$
(see \eqref{mL4}). 
Split $\mL_3 = L + \e^2 A + \e^3 B$, where 
\begin{gather*}
L = \pa_\t + \pa_{yy} \mH, \quad 
Ah = 3\pa_\t h + \pa_y (3\bar v^2 h),
\\
B = \e^{-3} \{ (\mu_2-1) \pa_{yy} \mH + a_6 \, \pa_y \mH 
+ (a_7 - \e^2 3\bar v^2) \, \pa_y + a_8\, \mH + (a_9 - \e^2(3\bar v^2)_y)\, + \mR_3 \}.
\end{gather*}
By \eqref{stima mu2},\eqref{a 6789 epsilon},\eqref{stima mR123}, 
\begin{equation} \label{stima B}
\| Bh \|_s \leq C(s,K) \big( \| h_{yy} \|_s + \|h_y\|_s + \|h\|_s 
+ \|u\|_{s+7} (\|h_y\|_0 + \|h\|_0)  \big), 
\quad 2 \leq s \leq r-3. 
\end{equation}
Let $S_i : Z_0 \to Z_0$, 
$S_1 := \e^{-2} (\tilde\Phi - I)$, 
$S_2 := \e^{-2} (\tilde\Phi\inv - I)$
(recall that $\pp = I$ on $Z_0$). 
Since $\Pi_V L = L \Pi_V = 0$, 
\begin{align}
\Pi_{V_{0N}} \tilde\mL_4 \Pi_{V_{0N}}  
& = \Pi_{V_{0N}} \tilde \Phi\inv \tilde \mL_3 \tilde \Phi \Pi_{V_{0N}}
= \Pi_{V_{0N}} (I + \e^2 S_2) \pp (L+\e^2 A + \e^3 B) \pp (I + \e^2 S_1) \Pi_{V_{0N}}
\notag 
\\ & 
= \e^2 \Pi_{V_{0N}} (A + \e B_1) \Pi_{V_{0N}},
\label{beef 0.5}
\end{align}
where 
\[
B_1 = \e S_2 \pp L \pp S_1 
+ \e S_2 \pp A 
+ \e A \pp S_1 
+ \e^3 S_2 \pp A \pp S_1 
+ \tilde\Phi\inv \pp B \pp \tilde\Phi.
\]
By Proposition \ref{prop:bif}, $\Pi_{V_{0N}} A \Pi_{V_{0N}} : V_{0N} \cap X_0 \to V_{0N} \cap Y$ is invertible, with 
\begin{equation} \label{beef 1}
\| (\Pi_{V_{0N}} A \Pi_{V_{0N}})\inv h \|_s \leq C \|h\|_{s-1} \quad \forall h \in V_{0N} \cap Y, \quad \forall s \geq 0,
\end{equation}
where $C>0$ depends only on the set $\mK$. 
By \eqref{stima Phi-I},\eqref{Phi-I for bif 1},\eqref{Phi-I for bif 2}, 
for $2 \leq s \leq r-9$, 
\begin{align*}
& \| S_1 h \|_s + \| S_2 h \|_s  
\leq C(s,K) (\| h \|_s + \|u\|_{s+12} \| h \|_2),
\\
& \| \pa_{\cdot} S_1 h \|_s 
\leq C(s,K) \big( \| \pa_{\cdot} h \|_s + \| h \|_s 
+ \| u \|_{s+14}(\|\pa_{\cdot} h \|_2 + \| h \|_2 ) \big), 
\quad \pa_{\cdot} = \pa_\t, \pa_y, \pa_{yy},
\end{align*}
for all $h \in Z_0$. Then, since $L = \pa_\t + \mH \pa_y^2$, 
$Ah = 3\pa_\t h +  3\bar v^2 \pa_y h + (3 \bar v^2)_y h$, 
and by \eqref{stima B}, 
\begin{equation} \label{beef 2}
\| \Pi_{V_{0N}} B_1 \Pi_{V_{0N}} h \|_s 
\leq C(s,K) (\|h\|_{s+1} + \|u\|_{s+14}\, \|h\|_3), 
\quad 2 \leq s \leq r-9,
\end{equation} 
because $\|\pa_y^2 h \|_s = \| \mH \pa_y^2 h \|_s = \|\pa_\t h\|_s \leq \| h \|_{s+1}$ for all $h \in V$. 
Thus, by \eqref{beef 1}, \eqref{beef 2},
\[
\| (\Pi_{V_{0N}} B_1 \Pi_{V_{0N}}) (\Pi_{V_{0N}} A \Pi_{V_{0N}})\inv h \|_s 
\leq C(s,K) (\|h\|_{s} + \|u\|_{s+14}\, \|h\|_2), 
\quad 
2 \leq s \leq r-9,
\]
for all $h \in V_{0N} \cap Y$. Since $B_1$ maps $X$ into $Y$, 
$B_2 := (\Pi_{V_{0N}} B_1 \Pi_{V_{0N}}) (\Pi_{V_{0N}} A \Pi_{V_{0N}})\inv$ maps  $Y$ into $Y$.
By standard Neumann series with tame estimates 
(see Lemma \ref{lemma:tame estimates for operators}), 
$I + \e B_2$ 
is invertible as an operator of $V_{0N} \cap Y$ onto itself, with 
\begin{equation} \label{beef 3}
\| ( I + \e B_2)\inv h \|_s 
\leq 
C(s,K) (\|h\|_{s} + \|u\|_{s+14}\, \|h\|_2), 
\quad 
2 \leq s \leq r-9,
\end{equation} 
provided that $\e C(K) < 1/2$, for some $C(K)>0$ depending on $K, K_{g,r},\|\bar v\|_{19}$. 
By \eqref{beef 1} and \eqref{beef 3}, 
$\Pi_{V_{0N}} (A + \e B_1) \Pi_{V_{0N}}$  
$= (I + \e B_2)\inv (\Pi_{V_{0N}} A \Pi_{V_{0N}})$ 
$: X_0 \cap V_{0N} \to Y \cap V_{0N}$ is invertible, with
\begin{equation*} 
\| \{ \Pi_{V_{0N}} (A + \e B_1) \Pi_{V_{0N}}\}\inv h \|_s 
\leq 
C(s,K) (\|h\|_{s-1} + \|u\|_{s+13}\, \|h\|_2), 
\quad 
3 \leq s \leq r-8.
\end{equation*} 
By \eqref{beef 0.5} the thesis is proved.
\end{proof}

By Lemma \ref{lemma:inversione su VN}, the $V_{0N}$-equation of system  \eqref{sistema Lyapunov-Schmidt} can be solved for $\Pi_{V_{0N}} h$,
\begin{equation} \label{hV}
\Pi_{V_{0N}} h = (\Pi_{V_{0N}} \tilde\mL_4 \Pi_{V_{0N}})\inv 
[ \Pi_{V_{0N}} f - \Pi_{V_{0N}} \tilde\mL_4 \Pi_{W_N} h].
\end{equation} 
Substituting $\Pi_{V_{0N}} h$, and using \eqref{bell'off-diag}, the $W_N$-equation of system \eqref{sistema Lyapunov-Schmidt} becomes
\begin{equation} \label{WN-equation}
\mA (\Pi_{W_N} h) = f_1, 
\end{equation}
where 
\begin{align} 
\label{def mA}
\mA & := \Pi_{W_N} \tilde\mL_4 \Pi_{W_N} - 
(\Pi_{W_N} \tilde\mR \Pi_{V_{0N}}) 
(\Pi_{V_{0N}} \tilde\mL_4 \Pi_{V_{0N}})\inv 
(\Pi_{V_{0N}} \tilde\mR \Pi_{W_N}), 
\\
\label{def f1}
f_1 & := \Pi_{W_N} f - (\Pi_{W_N} \tilde\mR \Pi_{V_{0N}}) 
(\Pi_{V_{0N}} \tilde\mL_4 \Pi_{V_{0N}})\inv \Pi_{V_{0N}} f.
\end{align}
$\tilde\mL_4 = \mD + \tilde\mR$, 
where 
$\mD = \om \pa_\t + \mu_2 \mH \pa_{yy} + \mu_1 \pa_y + \mu_0 \mH + \mu_{-2} \mH \pa_y^{-2}$, which is \eqref{DR}. 
In the basis $\{ e^{i(l\t+jy)} \}_{l,j}$, 
$\mD$ is diagonal with eigenvalues 
\begin{equation}  \label{eigenvalues}
\lm_{l,j} = \lm_{l,j}(u,\e) 
= i \big( \om l + \mu_2 j |j| + \mu_1 j - \mu_0 \,\sgn(j) - \mu_{-2}\, \sgn(j)(ij)^{-2} \, \big),
\end{equation}
where $\om = 1+3\e^2$ and $\mu_i(u,\e)$ are $C^1$ functions of $(u,\e)$. By \eqref{stima mu2},
\eqref{stima mu1},
\eqref{stima mu0},
\eqref{stima mu-2}, 
\begin{equation} \label{i coefficienti costanti}
|\om-1| + |\mu_2-1| + |\mu_1| + |\mu_0| + |\mu_{-2}| < 1/2
\end{equation} 
for $\e < \e_0$ sufficiently small. 
Remember the notation $\la j \ra = \max\{ 1,|j|\}$.

\begin{lemma}[Inversion on $W_N$]
\label{lemma:inversione su WN}
Let $K > 0$. There exists $\e_0 \in (0,1)$, depending on $K$, with the following property. 
Let $\e \in (0,\e_0)$, 
$\| u \|_{19} \leq K$, 
and assume that $\|u\|_4,\e_0$ satisfy \eqref{pallata}. 
Let 
\begin{equation}  \label{dioph base}
|\lm_{l,j}(u,\e)| > \frac{1}{2 \la j \ra^3} 
\quad \forall (l,j) \in \mW_N,
\end{equation}
where 
\[
\mW_N := \{(l,j) \in \mW : |j| \leq N\} 
= \{(l,j) \in \Z^2 : l+j|j| \neq 0, \ \ |j| \leq N\}.
\]
Then $\mA : X_0 \cap W_N \to Y \cap W_N$ is invertible, with 
\begin{equation} \label{inv mA}
\| \mA\inv h \|_s 
\leq C(s,K) (\|h\|_{s+3/2} + \| u \|_{s+16+3/2} \| h \|_2), 
\quad 3/2 \leq s \leq r-12-3/2.
\end{equation}
\end{lemma}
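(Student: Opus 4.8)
The plan is to mimic the proof of Lemma \ref{lemma:inversione su VN}: invert the diagonal part $\mD$ of $\tilde\mL_4 = \mD + \tilde\mR$ on the finite-dimensional space $W_N$ by means of the Diophantine assumption \eqref{dioph base}, and then absorb the rest $\tilde\mR$ together with the Schur-complement correction appearing in \eqref{def mA} by a Neumann series with tame estimates (Lemma \ref{lemma:tame estimates for operators}), exploiting the smallness of $\tilde\mR$.

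First I would invert $\Pi_{W_N}\mD\Pi_{W_N}$. Since $\mD$ of \eqref{DR} is diagonal in the basis $\{e_k\}$ with eigenvalues $\lm_{l,j}$ of \eqref{eigenvalues} and maps $X_0 \to Y$ while preserving $V$ and $W$, the hypothesis \eqref{dioph base} makes $\Pi_{W_N}\mD\Pi_{W_N}: X_0 \cap W_N \to Y \cap W_N$ invertible, with $(\Pi_{W_N}\mD\Pi_{W_N})\inv e_{l,j} = \lm_{l,j}\inv e_{l,j}$. The key remark, which is why the $3/2$ appears everywhere in the statement, is that this inverse loses only $3/2$ derivatives in the Sobolev scale of $\T^2$: using \eqref{i coefficienti costanti}, if $|\lm_{l,j}| \leq 1$ then (since $\lm_{l,j} = i(\om l + \mu_2 j|j| + \ldots)$ with $\om,\mu_2$ close to $1$) one gets $|l| \leq C|j|^2$, hence $\la(l,j)\ra \leq C\la j\ra^2$ and $\la j\ra^3 \leq C\la(l,j)\ra^{3/2}$, so by \eqref{dioph base} $|\lm_{l,j}|\inv \leq 2\la j\ra^3 \leq C\la(l,j)\ra^{3/2}$; and if $|\lm_{l,j}| > 1$ the bound $|\lm_{l,j}|\inv < 1 \leq \la(l,j)\ra^{3/2}$ is trivial. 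Hence $\|(\Pi_{W_N}\mD\Pi_{W_N})\inv h\|_s \leq C\|h\|_{s+3/2}$, with $C$ independent of $N$, and $(\Pi_{W_N}\mD\Pi_{W_N})\inv$ maps $Y\cap W_N\to X_0\cap W_N$.

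Next I would factor $\mA$ over $\Pi_{W_N}\mD\Pi_{W_N}$, writing $\mA = (\Pi_{W_N}\mD\Pi_{W_N})(I+R)$ with, by \eqref{bell'off-diag} and $\tilde\mL_4 = \mD+\tilde\mR$,
\[
R := (\Pi_{W_N}\mD\Pi_{W_N})\inv\big(\Pi_{W_N}\tilde\mR\Pi_{W_N} - (\Pi_{W_N}\tilde\mR\Pi_{V_{0N}})(\Pi_{V_{0N}}\tilde\mL_4\Pi_{V_{0N}})\inv(\Pi_{V_{0N}}\tilde\mR\Pi_{W_N})\big).
\]
I would then check that $R$ maps $X_0\cap W_N$ into itself, is $O(\e^2)$, and admits a tame estimate with a bounded loss of derivatives in $h$ and of regularity in $u$: the first summand combines \eqref{stima mR} ($\tilde\mR$ is $O(\e^2)$ and regularizing in the space variable) with the $3/2$-loss of the previous paragraph, the point being that the small-divisor loss occurs only on the modes with $\la(l,j)\ra \sim \la j\ra^2$, where the spatial smoothing of $\tilde\mR$ compensates it (away from that region $(\Pi_{W_N}\mD\Pi_{W_N})\inv$ gains derivatives); the second summand is $O(\e^2 \cdot \e^{-2} \cdot \e^2) = O(\e^2)$ because the factor $\e^{-2}$ carried by $(\Pi_{V_{0N}}\tilde\mL_4\Pi_{V_{0N}})\inv$ in \eqref{inv VLV} is flanked by two copies of $\tilde\mR$, each of size $\e^2$ and regularizing by \eqref{stima mR}, while the derivative losses of all three inverses are absorbed (that of $(\Pi_{V_{0N}}\tilde\mL_4\Pi_{V_{0N}})\inv$ being in fact a gain, by \eqref{inv VLV}). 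Since $R$ is small, for $\e < \e_0$ small $I + R$ is invertible on $X_0\cap W_N$ by Neumann series with tame estimates, so $\mA$ is invertible and $\mA\inv = (I+R)\inv(\Pi_{W_N}\mD\Pi_{W_N})\inv$. Collecting the bounds of this paragraph, of \eqref{inv VLV}, \eqref{stima mR} and Proposition \ref{prop:Phi mR}, and keeping track of the regularity indices, yields \eqref{inv mA}: the $+3/2$ in $\|h\|_{s+3/2}$ is the loss of $(\Pi_{W_N}\mD\Pi_{W_N})\inv$, the $+16+3/2$ in $\|u\|_{s+16+3/2}$ is produced by the $\|u\|_{s+17}$-term of \eqref{stima mR} (more precisely the composition with the other operators in Proposition \ref{prop:Phi mR}) combined with that $3/2$-shift, and the range $3/2 \leq s \leq r-12-3/2$ is exactly the one on which all the invoked estimates are available, the upper bound arising because the $3/2$-shift applied to \eqref{stima mR}, valid up to $s = r-12$, requires $s + 3/2 \leq r-12$.

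I expect the main obstacle to be the derivative-counting in the last paragraph, and in particular the verification that $R$ is genuinely bounded on $H^s$ (not merely bounded on each finite-dimensional $W_N$ with an $N$-dependent constant). This rests on the structural fact that the only place where the small divisors $\lm_{l,j}$ are weak is the ``parabolic'' region $\la(l,j)\ra \sim \la j\ra^2$, where the $3/2$-derivative loss of $\mD\inv$ is matched by the space-regularization of $\tilde\mR$. The complementary point to be careful about is that the factor $\e^{-2}$ inherited from Lemma \ref{lemma:inversione su VN} stays harmless: this works precisely because $\tilde\mR$ is of size $\e^2$ and is used twice, once on each side of the $V_{0N}$-inverse.
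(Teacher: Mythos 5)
Your high-level plan (perturb the diagonal part $\mD$, then absorb the remainder by a tame Neumann series) is the right one, and you correctly flag the crux — proving that the Neumann operator is genuinely bounded on $H^s$. But the factorization you chose does not let the available estimate do the work, and this is a real gap, not just a detail to be filled in. You write $\mA = \mD_{W_N}(I+R)$ with $R = \mD_{W_N}\inv\mR_{W_N}$, and you want to invoke Lemma \ref{lemma:tame estimates for operators}; but that lemma requires $R$ to be bounded on $H^s$ (no derivative loss), and for $R = \mD_{W_N}\inv\mR_{W_N}$ the only available route is $\|\mD_{W_N}\inv\mR_{W_N} f\|_s \leq C\|\mR_{W_N} f\|_{s+3/2}$, which by \eqref{stima mR} with $m=0$ is $\lesssim \e^2(\|f\|_{s+3/2}+\|u\|_{s+3/2+17}\|f\|_2)$: a loss of $3/2$ derivatives of $f$, so the Neumann series diverges in every fixed $H^s$. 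The heuristic you give — that the smoothing of $\tilde\mR$ compensates the small-divisor amplification on the parabolic region — is exactly right, but it only materializes when $\mR$ acts \emph{after} $\mD_{W_N}\inv$, because the regularizing estimate \eqref{stima mR} is one-sided: it controls $\mR\,\pa_y^m$ (derivatives applied to the input of $\mR$), not $\pa_y^m\,\mR$ (derivatives of the output). In $\mD_{W_N}\inv\mR_{W_N}$ the small divisors act on the output of $\mR_{W_N}$, and nothing in \eqref{stima mR} says that output is small in the bad Fourier region.

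The paper therefore uses the opposite factorization $\mA = (I+\mR_{W_N}\mD_{W_N}\inv)\,\mD_{W_N}$, and the key technical device — which your proposal lacks — is the insertion of the order-3 Fourier multiplier $\mU := \pa_y^3 + \Pi_T + \Pi_C$ between the two factors:
\[
\mR_{W_N}\mD_{W_N}\inv = (\mR_{W_N}\,\mU)\,(\mU\inv\mD_{W_N}\inv).
\]
Here $\mU\inv\mD_{W_N}\inv$ is bounded with operator norm $\leq 2$ on every $H^s$, because $|\lm_{l,j}|\,|\mU_j| > 1/2$ on $\mW_N$ by \eqref{dioph base} and $|\mU_j|=\la j\ra^3$ — this is how the assumption $|\lm_{l,j}| > 1/(2\la j\ra^3)$ is used exactly, with no slack; and $\mR_{W_N}\mU$ is $O(\e^2)$ and tame by \eqref{stima mR} (for the $\Pi_{W_N}\tilde\mR\Pi_{W_N}$ piece applied to $\pa_y^3 h$ and to $(\Pi_T+\Pi_C)h$) together with \eqref{inv VLV} (for the Schur-complement piece). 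So $\mR_{W_N}\mD_{W_N}\inv$ is bounded, $O(\e^2)$, tame, and Lemma \ref{lemma:tame estimates for operators} applies. Only at the very end is $\mD_{W_N}\inv$ applied on the left, $\mA\inv = \mD_{W_N}\inv(I+\mR_{W_N}\mD_{W_N}\inv)\inv$, and its single $3/2$-derivative loss gives the $\|h\|_{s+3/2}$ and $\|u\|_{s+16+3/2}$ in \eqref{inv mA}. Your estimate $|\lm_{l,j}|\inv \leq C\la l\ra^{3/2}$ on the region $|\lm_{l,j}|<1$ is correct and is the paper's last step; what is missing in your write-up is the $\mU$-insertion and the choice of factorization that makes the Neumann operator bounded rather than unbounded.
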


\begin{proof} 
Since $\tilde\mL_4 = \tilde\mD + \tilde\mR$, 
we have $\mA = \mD_{W_N} + \mR_{W_N}$, where 
\[
\mD_{W_N} := \Pi_{W_N} \mD \Pi_{W_N}, \quad  
\mR_{W_N} := \Pi_{W_N} \tilde\mR \Pi_{W_N} - 
(\Pi_{W_N} \tilde\mR \Pi_{V_{0N}}) 
(\Pi_{V_{0N}} \tilde\mL_4 \Pi_{V_{0N}})\inv 
(\Pi_{V_{0N}} \tilde\mR \Pi_{W_N}).
\]
Like $\mA$, also $\mD_{W_N}$ and $\mR_{W_N}$ map $X$ into $Y$. 
$\mD_{W_N} : W_N \to W_N$ is invertible because $\lm_{l,j} \neq 0$ for all $(l,j) \in \mW_N$. 
Let 
\[
\mU := \pa_{y}^3 + \Pi_T + \Pi_C, \quad 
\mU e^{i(l\t+jy)} = \mU_j \, e^{i(l\t+jy)}, \quad 
\mU_j = (ij)^3 \ \  \forall j \neq 0, \quad 
\mU_0 = 1.
\] 
$|\lm_{l,j}| |\mU_j| > 1/2$ for every $(l,j) \in \mW_N$ 
because $|\mU_j| = \la j \ra^3$.
As a consequence, 
\[
\| \mU\inv \mD_{W_N}\inv h \|_s \leq 2 \|h\|_s \quad \forall h \in W_N, \quad \forall s \geq 0.
\]
By \eqref{stima mR} and \eqref{inv VLV},
\[
\| \mR_{W_N} \mU h \|_s \leq 
\| \mR_{W_N} \pa_y^3 h \|_s + \| \mR_{W_N} (\Pi_T + \Pi_C) h \|_s 
\leq \e^2 C(s,K) (\|h\|_{s} + \| u \|_{s+16} \| h \|_2)
\]
for $3 \leq s \leq r-12$, whence 
\[
\| \mR_{W_N} \mD_{W_N}\inv h \|_s 
= \| (\mR_{W_N} \mU)(\mU\inv \mD_{W_N}\inv) h \|_s 
\leq \e^2 C(s,K) (\|h\|_{s} + \| u \|_{s+16} \| h \|_2), 
\quad 3 \leq s \leq r-12.
\]
For $s=3$, 
$\| \mR_{W_N} \mD_{W_N}\inv h \|_3 
\leq \e^2 C(K) \|h\|_3$. 
By Lemma \ref{lemma:tame estimates for operators}, 
$I+ \mR_{W_N} \mD_{W_N}\inv$ is invertible on $W_N$, with 
\[
\| (I+ \mR_{W_N} \mD_{W_N}\inv)\inv h \|_s 
\leq C(s,K) (\|h\|_{s} + \| u \|_{s+16} \| h \|_2), 
\quad 3 \leq s \leq r-12,
\]
if $\e^2 C(K) < 1/2$. Therefore $\mA = (I + \mR_{W_N} \mD_{W_N}\inv) \mD_{W_N}$ is also invertible. 
Now $\| \mD_{W_N}\inv h \|_s \leq C \| h \|_{s+3/2}$ 
because, for indices $(l,j) \in \mW$ such that $|\lm_{l,j}| < 1$, one has $|j|^2 \leq C|l|$ by the triangular inequality and \eqref{i coefficienti costanti}, 
so that $1/|\lm_{l,j}| \leq 2 \la j \ra^3 \leq C \la l \ra^{3/2}$. 
Hence \eqref{inv mA} follows.
\end{proof}

Remember the definition $P_\e := \e^2 \Pi_V + \Pi_W$.

\begin{lemma}[Inversion of $\Pi_N \tilde \mL_4 \Pi_N$]
\label{lemma:inversione mL4}
Assume the hypotheses of lemmata \ref{lemma:inversione su VN} and \ref{lemma:inversione su WN}.
Then for every $f \in Y_N$ there exists a unique $h \in X_{0N}$ such that $\Pi_N \tilde\mL_4 \Pi_N h = f$. The inverse operator $(\Pi_N \tilde\mL_4 \Pi_N)\inv$ maps $Y_N 
\to X_{0N}$, with
\begin{gather}
\label{inversa con eps-2}
\|(\Pi_N \tilde\mL_4 \Pi_N)\inv f \|_s 
\leq \e^{-2} C(s,K) ( \|f\|_{s+3/2} + \| u \|_{s+17+3/2} \| f \|_2),
\\
\label{inversa ottima}
\|(\Pi_N \tilde\mL_4 \Pi_N)\inv P_\e f \|_s 
+ \|P_\e (\Pi_N \tilde\mL_4 \Pi_N)\inv f \|_s 
\leq C(s,K) ( \|f\|_{s+3/2} + \| u \|_{s+17+3/2} \| f \|_2),
\end{gather}
$3/2 \leq s \leq r-12-3/2$.
\end{lemma}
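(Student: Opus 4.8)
The plan is to derive this from the two partial inversions already established, via the Lyapunov--Schmidt splitting $Z_{0N} = V_{0N} \oplus W_N$ of \eqref{sistema Lyapunov-Schmidt}. Given $f \in Y_N$, first use Lemma \ref{lemma:inversione su VN} to solve the $V_{0N}$-component of \eqref{sistema Lyapunov-Schmidt} for $\Pi_{V_{0N}} h$, which yields the explicit formula \eqref{hV}. Substituting \eqref{hV} into the $W_N$-component and using the off-diagonal identities \eqref{bell'off-diag}, the system reduces to the single equation \eqref{WN-equation}, namely $\mA(\Pi_{W_N} h) = f_1$ with $\mA$, $f_1$ as in \eqref{def mA},\eqref{def f1}. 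Under the Diophantine condition \eqref{dioph base}, Lemma \ref{lemma:inversione su WN} gives the invertibility of $\mA$, so $\Pi_{W_N} h := \mA\inv f_1$ is uniquely determined; inserting it back into \eqref{hV} fixes $\Pi_{V_{0N}} h$, and $h := \Pi_{V_{0N}} h + \Pi_{W_N} h$ is the unique solution of $\Pi_N \tilde\mL_4 \Pi_N h = f$. The fact that $h \in X_{0N}$ (rather than merely $Z_{0N}$) is automatic, because $\tilde\mL_4$, $\mA$ and both partial inverses respect the parity decomposition $Z = X \oplus Y$ --- sending $X$-data to $Y$-data and conversely --- as recorded in the two preceding lemmata.

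Next I would collect the quantitative bounds, keeping track of the powers of $\e$. The three ingredients scale differently: the inverse on $V_{0N}$ carries the factor $\e^{-2}$ of \eqref{inv VLV}; the inverse of $\mA$ in \eqref{inv mA} carries no power of $\e$; and each off-diagonal block $\Pi_{V_{0N}} \tilde\mR \Pi_{W_N}$ and $\Pi_{W_N} \tilde\mR \Pi_{V_{0N}}$ gains a factor $\e^2$ (and is smoothing in $y$, bounded in $t$) by \eqref{stima mR}. Hence in $f_1$ the coupling term $(\Pi_{W_N} \tilde\mR \Pi_{V_{0N}})(\Pi_{V_{0N}} \tilde\mL_4 \Pi_{V_{0N}})\inv \Pi_{V_{0N}} f$ is comparable to $f$ itself --- the $\e^2$ of the block cancelling the $\e^{-2}$ of the $V$-inverse --- so that $\|f_1\|_s$ is controlled by $\|f\|_s$ up to the usual $\|u\|$-weighted correction, and applying \eqref{inv mA} gives $\|\Pi_{W_N} h\|_s \leq C(s,K)(\|f\|_{s+3/2} + \|u\|_{s+17+3/2}\|f\|_2)$. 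Feeding this into \eqref{hV} and using \eqref{inv VLV} once more together with \eqref{stima mR}, the $V_{0N}$-component obeys the same bound multiplied by $\e^{-2}$ (the $\e^{-2}$ on $V$ is genuine and cannot be removed). Summing the two components proves \eqref{inversa con eps-2}.

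The improved estimate \eqref{inversa ottima} comes from inserting $P_\e = \e^2 \Pi_V + \Pi_W$, which is the identity on $W$-type components and multiplies $V$-type components by $\e^2$. For $P_\e(\Pi_N \tilde\mL_4 \Pi_N)\inv f$ this extra $\e^2$ multiplies precisely the $\e^{-2}$-bounded $V_{0N}$-component from the previous paragraph, so the negative power disappears, while the $W_N$-component needs no correction. For $(\Pi_N \tilde\mL_4 \Pi_N)\inv P_\e f$ one reruns the argument with datum $P_\e f$: on the $V$-equation the right-hand side is $\e^2 \Pi_{V_{0N}} f$, which compensates the $\e^{-2}$ of \eqref{inv VLV}; on the $W$-equation the datum $\Pi_{W_N} f$ is unchanged; and the coupling terms again pair factors $\e^{2}$ and $\e^{-2}$; in neither case does a negative power of $\e$ survive, which is \eqref{inversa ottima}.

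I expect the only real obstacle to be the exact Sobolev-index bookkeeping rather than any structural point. One has to check that the gain $s \mapsto s-1$ of \eqref{inv VLV}, the loss $s \mapsto s+3/2$ of \eqref{inv mA}, and the fact that $\tilde\mR$ in \eqref{stima mR} is smoothing in $y$ and of order zero in $t$ combine so that the worst surviving loss is exactly the $3/2$ of $\mA\inv$, plus a fixed upward shift in the $\|u\|$-weight (which is what produces $s+17+3/2$), and that all intermediate regularity indices stay inside the admissibility windows $3 \leq s \leq r-8$ of Lemma \ref{lemma:inversione su VN} and $3/2 \leq s \leq r-12-3/2$ of Lemma \ref{lemma:inversione su WN} --- using monotonicity of the Sobolev norms to cover the low-$s$ endpoint, where Lemma \ref{lemma:inversione su VN} is stated only for $s \geq 3$. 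This accounting is what pins down the final admissibility range $3/2 \leq s \leq r-12-3/2$; everything else is an immediate consequence of lemmata \ref{lemma:inversione su VN} and \ref{lemma:inversione su WN}.
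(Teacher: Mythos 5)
Your proof is correct and follows exactly the route the paper takes: solve the $V_{0N}$-equation via Lemma \ref{lemma:inversione su VN} to get \eqref{hV}, reduce the $W_N$-equation to $\mA(\Pi_{W_N}h)=f_1$, invert with Lemma \ref{lemma:inversione su WN}, and recombine; the paper's own proof is just the one-line citation of \eqref{sistema Lyapunov-Schmidt}, \eqref{hV}, \eqref{WN-equation}, \eqref{def mA}, \eqref{def f1}, \eqref{inv VLV}, \eqref{inv mA}. Your explicit bookkeeping of how the $\e^2$ of $\tilde\mR$ in \eqref{stima mR} cancels the $\e^{-2}$ of \eqref{inv VLV} in the coupling terms, and of how $P_\e$ absorbs the remaining $\e^{-2}$ on the $V$-component, is exactly the content that the paper leaves implicit.
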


\begin{proof} 
Use \eqref{sistema Lyapunov-Schmidt}, \eqref{hV}, \eqref{WN-equation}, 
\eqref{def mA}, \eqref{def f1}, \eqref{inv VLV} and \eqref{inv mA}.
\end{proof}

\begin{lemma}[Derivatives of $(\Pi_N \tilde\mL_4 \Pi_N)\inv$]
\label{lemma:mL4 der}
Let $K > 0$. There exists $\e_0 \in (0,1)$, depending on $K$, with the following property. 

Let $\e \in (0,\e_0)$, 
$\| u \|_{22} \leq K$, 
assume that $\|u\|_4,\e_0$ satisfy \eqref{pallata}, and that \eqref{dioph base} holds.
Then, for $2 \leq s \leq r-18$, 
\begin{align*}
\| \pa_u (\Pi_N \tilde\mL_4 \Pi_N)\inv [h] f \|_s 
& \leq \e^{-1} C(s,K)
\big( \|f\|_{s+6} \| h \|_{14} + \| f \|_8 ( \| h \|_{s+16} + \| u \|_{s+23} \| h \|_{14}) \big),
\\
\|\pa_\e (\Pi_N \tilde\mL_4 \Pi_N)\inv f \|_s 
& \leq \e^{-3} C(s,K) (\|f\|_{s+6} +  \| u \|_{s+23} \| f \|_8),
\\
\| \pa_u (\Pi_N \tilde\mL_4 \Pi_N)\inv [h] P_\e f \|_s 
& + \| P_\e \pa_u (\Pi_N \tilde\mL_4 \Pi_N)\inv [h] f \|_s 
\\ 
& \leq \e C(s,K) 
\big( \|f\|_{s+6} \| h \|_{14} + \| f \|_8 ( \| h \|_{s+16} + \| u \|_{s+23} \| h \|_{14}) \big),
\\
\| \{ \pa_\e (\Pi_N \tilde\mL_4 \Pi_N)\inv \} P_\e f \|_s 
& + \| P_\e \{ \pa_\e (\Pi_N \tilde\mL_4 \Pi_N)\inv \} f \|_s 
\leq \e^{-1} C(s,K) (\|f\|_{s+6} +  \| u \|_{s+23} \| f \|_8).
\end{align*}
\end{lemma}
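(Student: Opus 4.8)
\noindent\emph{Plan of proof.} The idea is to differentiate the operator identity rather than to redo the Lyapunov--Schmidt reduction of Lemmata \ref{lemma:inversione su VN}--\ref{lemma:inversione mL4}. Since $X_{0N}\to Y_N$ is a map between finite-dimensional spaces and $\tilde\mL_4$ depends on $(u,\e)$ in a $C^1$ way (Propositions \ref{prop:a12345}, \ref{prop:Psi mM mR3}, \ref{prop:Phi mR}), the operator $T:=\Pi_N\tilde\mL_4\Pi_N$ is $C^1$ in $(u,\e)$ and, under the present hypotheses, invertible, so that
\[
\pa_u T\inv[h] = -\,T\inv\,\big(\Pi_N\,\pa_u\tilde\mL_4[h]\,\Pi_N\big)\,T\inv,
\qquad
\pa_\e T\inv = -\,T\inv\,\big(\Pi_N\,\pa_\e\tilde\mL_4\,\Pi_N\big)\,T\inv.
\]
Hence everything reduces to two ingredients: the tame bounds for $T\inv$ proved in Lemma \ref{lemma:inversione mL4} (both \eqref{inversa con eps-2} and the $P_\e$-versions \eqref{inversa ottima}), and tame bounds for the operators $\pa_u\tilde\mL_4[h]$, $\pa_\e\tilde\mL_4$ acting on $Z_{0N}$.

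For the second ingredient I would differentiate the decomposition $\tilde\mL_4=\pp(\mD+\mR)\pp$ of \eqref{mL4}--\eqref{DR}. The constant-coefficient part $\mD=\om\pa_\t+\mu_2\mH\pa_{yy}+\mu_1\pa_y+\mu_0\mH+\mu_{-2}\mH\pa_y^{-2}$ is differentiated term by term, so $\pa_u\mD[h]$ (resp. $\pa_\e\mD$) is the same second-order operator with the $\mu_i$ replaced by $\pa_u\mu_i[h]$ (resp. $\pa_\e\mu_i$), whose sizes are \eqref{stima mu2}, \eqref{stima mu1}, \eqref{stima mu0}, \eqref{stima mu-2}; thus $\pa_u\mD[h]$ is an order-$\le2$ operator of size $O(\e^3)$ (with a fixed low-index loss in $h$), while $\pa_\e\mD$ is of size $O(\e)$ because of the term $\pa_\e\om\,\pa_\t=6\e\,\pa_\t$. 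The rest $\mR=\tilde\Phi\inv\pp\mR_4$ is differentiated by the Leibniz rule: $\pa_u\tilde\Phi\inv[h]$ is controlled by \eqref{stima Phi der u}, while $\pa_u\mR_4[h]$ is reassembled, exactly as $\mR_4$ itself was in the proof of \eqref{stima mRmH}--\eqref{stima mR}, from the $u$-derivatives of the ingredients $a_i,\a\k,\b\k,\tilde\mM$ collected in \eqref{coeff a12345 der u}, \eqref{stima alfa beta der u}, \eqref{stima Psi der u}, \eqref{stima mM der u}, \eqref{stima Phi der u}, \eqref{a 6789 epsilon}. The outcome is that $\pa_u\mR[h]$ is still regularizing in $y$ and bounded in $t$ --- $\|(\pa_u\mR[h])\pa_y^m f\|_s$ obeys a tame estimate for $0\le m\le 3$ --- and is $O(\e^3)$, one power of $\e$ better than $\mR$; and $\pa_\e\mR$ is likewise regularizing and of size $O(\e)$. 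All the $\pa_\e$-statements follow from \eqref{coeff a12345 der epsilon}, \eqref{stima alfa beta der epsilon}, \eqref{stima Psi der epsilon}, \eqref{stima mM der epsilon}, \eqref{stima Phi der epsilon} by the same book-keeping.

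It then remains to multiply the three factors. Combining $\pa_u\tilde\mL_4[h]=O(\e^3)$ with two applications of \eqref{inversa con eps-2} (each costing a factor $\e^{-2}$) gives $\e^{-2}\cdot\e^{3}\cdot\e^{-2}=\e^{-1}$, and the loss of Sobolev indices is the sum of two $3/2$-losses, the two derivatives lost by the $\mH\pa_{yy}$-part of $\mD$, and the extra derivative lost by $\tilde\Psi-I$ inside $\pa_u\mR_4[h]$ (Remark \ref{rem:loss of 1 for Psi-I}), which produces the $\|f\|_{s+6}$, $\|h\|_{14}$, $\|h\|_{s+16}$ and $\|u\|_{s+23}$ appearing in the statement; likewise $\pa_\e\tilde\mL_4=O(\e)$ yields $\e^{-2}\cdot\e\cdot\e^{-2}=\e^{-3}$. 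For the estimates with $P_\e$, write $P_\e\,\pa_u T\inv[h]f=-(P_\e T\inv)(\Pi_N\pa_u\tilde\mL_4[h]\Pi_N)(T\inv f)$ and $\pa_u T\inv[h]P_\e f=-(T\inv)(\Pi_N\pa_u\tilde\mL_4[h]\Pi_N)(T\inv P_\e f)$: in each case one of the two inverses is now controlled by \eqref{inversa ottima} (no $\e^{-2}$) and only one factor $\e^{-2}$ survives, so $1\cdot\e^{3}\cdot\e^{-2}=\e$, and analogously $1\cdot\e\cdot\e^{-2}=\e^{-1}$ for $\pa_\e$. The admissible range $2\le s\le r-18$ is obtained by adding the losses of Lemma \ref{lemma:inversione mL4} and of Propositions \ref{prop:Psi mM mR3}, \ref{prop:Phi mR} through the two compositions.

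The only real difficulty is the book-keeping concentrated in the second step: one has to track simultaneously the derivatives lost (the order-$2$ part of $\mD$, the factor $\tilde\Psi-I$, the terms of $\mR_4$), the derivatives \emph{gained} (from the regularizing $\mR$ and, inside $T\inv$, from $\mD_{W_N}\inv$), and the powers of $\e$ (gained by differentiating the $O(\e)$-small coefficients, lost by the $\Pi_V$-inversion), and one must insert $P_\e,P_\e\inv$ precisely where the off-diagonal $V$--$W$ couplings of $\tilde\mR$ and of its derivatives would otherwise reintroduce a factor $\e^{-2}$. No new analytical input is needed beyond the estimates already established; the statement is a multilinear tame inequality produced essentially mechanically from a long chain of conjugations.
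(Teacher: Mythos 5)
Your argument is correct at its core: it rests on the same central formula
\[
\pa_\lm T\inv = -\,T\inv\,(\pa_\lm T)\,T\inv
\]
(Lemma \ref{lemma:tame estimates for operators}$(ii)$, formula \eqref{formula der lm inv}), applied to $T=\Pi_N\tilde\mL_4\Pi_N$, combined with the tame bounds for $T\inv$ and $T\inv P_\e$, $P_\e T\inv$ from Lemma \ref{lemma:inversione mL4}, and your insertion of $P_\e$ on the correct side so that exactly one factor $\e^{-2}$ survives is precisely what produces the $\e^{\pm1}$ powers in the statement. Where you diverge from the paper is in the intermediate step, namely the tame bounds for $\pa_u\tilde\mL_4[h]$ and $\pa_\e\tilde\mL_4$. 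The paper differentiates the conjugation identity \eqref{catenella}, $\tilde\mL_4 = \tilde\Phi\inv\tilde\mM\inv\tilde\Psi\inv\tilde\mL\tilde\Psi\tilde\Phi$, and simply inserts the already-proved derivative estimates \eqref{stima Psi der u}--\eqref{stima Psi der epsilon}, \eqref{stima mM der u}--\eqref{stima mM der epsilon}, \eqref{stima Phi der u}--\eqref{stima Phi der epsilon} for the conjugating operators and the bounds on $\pa_u\tilde\mL[h]$, $\pa_\e\tilde\mL$ that come directly from Proposition \ref{prop:a12345}; this yields $\|\pa_u\tilde\mL_4[h]f\|_s\le\e^3C(s,K)(\|f\|_{s+3}\|h\|_{14}+\dots)$ in one pass, with no need to open up $\mR_4$. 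You instead propose to differentiate the normal-form decomposition $\tilde\mL_4=\pp(\mD+\mR)\pp$ term by term. This is workable, but be careful about the $\e$-bookkeeping: $\pa_u\mD[h]$ is in fact $O(\e^4)$, not $O(\e^3)$, since by \eqref{stima mu2}--\eqref{stima mu-2} all of $\pa_u\mu_2[h],\pa_u\mu_1[h],\pa_u\mu_0[h]$ are $O(\e^4)$ and $\om$ is $u$-independent; the leading $\e^3$ therefore has to be carried by $\pa_u\mR[h]$, which one can see by tracing the $\e^3\,6\bar v\,h$ contributions of $\pa_u a_3[h]$, $\pa_u a_5[h]$ through $a_7,a_9$ into the explicit formula for $\mR_4$. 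That is exactly the extra work the conjugation route avoids, since Proposition \ref{prop:Phi mR} gives $\|\mR\,\pa_y^m f\|_s$ but not its $u$- or $\e$-derivatives, so your path requires a nontrivial supplement to the paper's stated estimates. Your accounting of the Sobolev losses ($3/2+3/2$ from the two inverses plus $3$ from $\pa\tilde\mL_4$, giving $\|f\|_{s+6}$) and the treatment of the $P_\e$-weighted versions via \eqref{inversa ottima} are both correct and match the paper's conclusion.
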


\begin{proof}[Proof of Lemma \ref{lemma:mL4 der}] 
By Proposition \ref{prop:a12345}, for all $0 \leq s \leq r$,
\begin{align*}
\| \tilde\mL f \|_s 
& \leq C(s,K) ( \| f \|_{s+2} + \| u \|_{s+4} \| f \|_2),
\\
\| \pa_u \tilde\mL[h] f \|_s 
& \leq \e^3 C(s,K) \big( \| f \|_{s+2} \| h \|_4 + \| f \|_2 
(\| h \|_{s+4} + \| u \|_{s+4} \| h \|_4) \big),
\\
\| \pa_\e \tilde\mL f \|_s 
& \leq \e C(s,K) (\| f \|_{s+2} + \| u \|_{s+4} \| f \|_2).
\end{align*}
Hence, from formula \eqref{catenella}, using the estimates 
\eqref{stima Psi der u},
\eqref{stima Psi der epsilon},
\eqref{stima mM der u},
\eqref{stima mM der epsilon},
\eqref{stima Phi der u},
\eqref{stima Phi der epsilon}
for $\tilde\Phi,\tilde\Psi,\tilde\mM$ and their inverse, 
\begin{align*}
\| \tilde\mL_4 f \|_s 
& \leq C(s,K) ( \| f \|_{s+2} + \| u \|_{s+14} \| f \|_2),
\\
\| \pa_u \tilde\mL_4[h] f \|_s 
& \leq \e^3 C(s,K) \big( \| f \|_{s+3} \| h \|_{14} + \| f \|_5 
(\| h \|_{s+14} + \| u \|_{s+15} \| h \|_{14}) \big),
\\
\| \pa_\e \tilde\mL_4 f \|_s 
& \leq \e C(s,K) (\| f \|_{s+3} + \| u \|_{s+15} \| f \|_5),
\end{align*}
for $2 \leq s \leq r-10$.
The Lemma follows from formula \eqref{formula der lm inv} and Lemma \ref{lemma:inversione mL4}.
\end{proof}

\subsection{\normalsize{Further estimates}}

In this section we collect some tame estimates that will be used in the Nash-Moser iteration.

\begin{lemma}[Tame estimates for $F$] \label{lemma:tame per F}
$(i)$ There exists $\e_0 \in (0,1)$, depending only on $\| \bar v_1 \|_5$, such that 
\begin{gather}  \label{stima v2}
\e \| \bar v_1 \|_4 + \e^2 \| \bar v_2 \|_4 < \d_0, \quad 
\| \bar v_2(\e) \|_s \leq C(s), \quad  
\| \pa_\e \bar v_2(\e) \|_s \leq \e^{-1} C(s), 
\\
\label{F(u0)}
\| F(\bar v_2(\e),\e) \|_s \leq \e C(s), \quad 
\| \pa_\e \{ F(\bar v_2(\e),\e) \} \|_s \leq C(s),
\end{gather}
for every $\e \in (0,\e_0)$, $2 \leq s \leq r$.

$(ii)$ Assume that $\e_0, u,h$ satisfy $\e_0 \| \bar v_1 \|_4 + \e_0^2 (\| u \|_4 + \| h \|_4) < \d_0$ ($\d_0$ is the universal constant of \eqref{pallata}), and $\| u \|_4 + \| h \|_4 \leq K$. 
Let 
\begin{equation} \label{def Q}
Q(u,h,\e) := F(u+h,\e) - F(u,\e) - \pa_u F(u,\e)[h].
\end{equation}
Then, for $2 \leq s \leq r$, $\e \in (0,\e_0)$, 
\begin{equation} \label{stima Q generica}
\| Q(u,h,\e) \|_s \leq C(s,K) \| h \|_4 (\|h\|_{s+2} + \|u\|_{s+2} \|h\|_4). 
\end{equation}

$(iii)$ Assume that $\e_0 \| \bar v_1 \|_4 + \e_0^2 \| u \|_4 < \d_0$, namely \eqref{pallata}, and $\| u \|_4 \leq K$.
Then 
\begin{align} 
\label{tame F basic}
\| F(u,\e) \|_s 
& \leq C(s,K) (1 + \|u\|_{s+2}), 
\\
\| \pa_u F(u,\e)[h] \|_s 
& \leq C(s,K) (\| h \|_{s+2} + \| u \|_{s+2} \| h \|_4), 
\\
\| \pa_\e F(u,\e)[h] \|_s 
& \leq \e^{-1} C(s,K) (1 + \| u \|_{s+2}), 
\end{align}
for all $2 \leq s \leq r$, $\e \in (0,\e_0)$.
\end{lemma}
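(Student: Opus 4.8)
All three parts rest on the explicit formula \eqref{formula F} for $F$, the expansion \eqref{mF bar v1 bar v2}, the Moser-type tame product and composition estimates in Sobolev spaces (Appendix \ref{sec:Appendix B. Tame estimates}), and the fourth-order vanishing \eqref{g order 4} of the $g_i$. The one structural remark used repeatedly is this: writing $U := \e \bar v_1 + \e^2 u = \e w$ with $w := \bar v_1 + \e u$, one has $\| w \|_s \le C(s,K)$, and the arguments of $g_i$ in $\mN_4(U)$ are $\e$ times functions bounded by $C(s,K)(1 + \|u\|_{s+2})$ and contained in the ball $B_1$ by \eqref{pallata} and \eqref{piccolezza base}; hence, by the integral form of Taylor's formula at $y = 0$, $g_i(x, \e \, \cdot\,) = \e^4 \cdot (\text{bounded composition})$, which absorbs the factor $\e^{-4}$ on the $\Pi_V$ component (and leaves $\e^2$ from $\e^4$ against the $\e^{-2}$ on $\Pi_W$).

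\textbf{Part $(iii)$.} I would estimate each summand of \eqref{formula F} separately. The polynomial terms $\Pi_V \{ 3 \pa_t u + \pa_x(3 \bar v_1^2 u + 3 \e \bar v_1 u^2 + \e^2 u^3) \}$ and $\Pi_W \{ L u + 3\e^2 \pa_t u + \e \pa_x[(\bar v_1 + \e u)^3] \}$ are handled by the tame product estimate, with $\| \bar v_1 \|_s \le C(s)$ and $\| u \|_4 \le K$; the $+2$ in the norms comes from $L$. The two terms containing $\mN_4$ are handled by the composition estimate and the remark above. Differentiating \eqref{formula F} in $u$ and in $\e$ and applying the same estimates to $\pa_y g_i$ gives the bounds for $\pa_u F[h]$ and $\pa_\e F[h]$; the only negative power of $\e$ appears when $\pa_\e$ hits the prefactor $\e^{-4}$ of the $\Pi_V$ component of $\mN_4(U)$ (which is $O(\e^4)$), and it is exactly $\e^{-1}$.

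\textbf{Part $(i)$.} By construction $\bar v_2(\e)$ is the unique solution in $V_0 \cap X$ of \eqref{linearized unperturbed bif eq}, with right-hand side $-\e^{-4} \Pi_V \mN_4(\e \bar v_1)$. Since $\bar v_1$ is a fixed trigonometric polynomial, the remark above gives $\| \e^{-4} \mN_4(\e \bar v_1) \|_s \le C(s)$ uniformly in $\e$, and Proposition \ref{prop:bif} (which gains one derivative) yields $\| \bar v_2(\e) \|_s \le C(s)$ for $2 \le s \le r$; in particular $\e \| \bar v_1 \|_4 + \e^2 \| \bar v_2 \|_4 < \d_0$ for $\e_0$ small. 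Differentiating \eqref{linearized unperturbed bif eq} in $\e$ and inverting again reduces $\| \pa_\e \bar v_2 \|_s \le \e^{-1} C(s)$ to $\| \pa_\e [\e^{-4} \mN_4(\e \bar v_1)] \|_s \le \e^{-1} C(s)$, which holds because $\mN_4(\e \bar v_1) = O(\e^4)$ and $\pa_\e \mN_4(\e \bar v_1) = O(\e^3)$. For \eqref{F(u0)} I would set $u = \bar v_2(\e)$ in \eqref{mF bar v1 bar v2}: by the choice of $\bar v_1, \bar v_2$ the $\Pi_V$-component of $\mF(\e \bar v_1 + \e^2 \bar v_2, 1 + 3\e^2)$ is $O(\e^5)$ and the $\Pi_W$-component is $O(\e^3)$, so $F(\bar v_2, \e) = (\e^{-4} \Pi_V + \e^{-2} \Pi_W) \mF(\e \bar v_1 + \e^2 \bar v_2, 1+3\e^2) = O(\e)$, and the tame estimates turn this into $\| F(\bar v_2(\e), \e) \|_s \le \e C(s)$. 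The bound on $\pa_\e \{ F(\bar v_2(\e), \e) \}$ follows by differentiating \eqref{mF bar v1 bar v2} term by term, using the bounds on $\bar v_2$ and $\pa_\e \bar v_2$ just obtained: each occurrence of $\bar v_2$ in \eqref{mF bar v1 bar v2} carries at least one extra power of $\e$ beyond the projection weight, so the worst contribution of $\pa_\e \bar v_2$ is $O(\e^2) \cdot \e^{-1} = O(\e)$.

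\textbf{Part $(ii)$.} I would use the integral form of the Taylor remainder,
\[
Q(u,h,\e) = \int_0^1 (1-\t) \, \pa_u^2 F(u+\t h, \e)[h,h] \, d\t ,
\]
and bound $\pa_u^2 F(w, \e)[h,h]$ for $w = u + \t h$. From \eqref{formula F}, $\pa_u^2 F$ receives contributions only from the quadratic and cubic polynomial terms and from $\mN_4$, whose second differential vanishes to order $2$ at $0$ by \eqref{g order 4}; the tame product and composition estimates give $\| \pa_u^2 F(w, \e)[h,h] \|_s \le C(s,K) \| h \|_4 ( \| h \|_{s+2} + \| w \|_{s+2} \| h \|_4)$, and integrating in $\t$ with $\| u + \t h \|_{s+2} \le \| u \|_{s+2} + \| h \|_{s+2}$ and $\| h \|_4 \le K$ gives \eqref{stima Q generica}. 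The main difficulty throughout is the bookkeeping of the powers of $\e$: one must check, term by term in \eqref{formula F} and \eqref{mF bar v1 bar v2}, that the factors $\e^{-4}$ on $\Pi_V$ and $\e^{-2}$ on $\Pi_W$ are always compensated, and that differentiation in $\e$ loses at most the single power $\e^{-1}$; this is where the composition estimate must be used in the form that makes the $\e^4$ coming from \eqref{g order 4} explicit.
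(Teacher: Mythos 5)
Your proposal matches the paper's proof in all essentials: part $(i)$ is proved using the invertibility of $\Pi_V A \Pi_V$ from Proposition \ref{prop:bif} and the estimate $\| \mN_4(h) \|_s \leq C(s) \| h\|_4^3 \|h\|_{s+2}$ coming from \eqref{g order 4} and \eqref{Taylor composition}, part $(iii)$ by applying the standard tame estimates to \eqref{formula F}, and part $(ii)$ by reducing $Q$ to a Taylor remainder. The only (cosmetic) deviation is in $(ii)$: you use the integral form $Q = \int_0^1 (1-\t)\,\pa_u^2 F(u+\t h,\e)[h,h]\,d\t$, whereas the paper writes $Q$ out explicitly as $\e^{-2}P_\e\inv$ applied to the polynomial remainder plus the first-order Taylor remainder of $\mN_4$ and then invokes \eqref{Taylor composition} directly; both are correct and give the same bound.
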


\begin{proof} In Section \ref{sec:appendix proofs}. 
\end{proof}

\begin{remark}
Estimate \eqref{stima Q generica} actually holds with an additional factor $\e$ on the right-hand side. However, this makes no essential difference in our iteration proof below.
\end{remark}

\begin{lemma} \label{lemma:h<F(u)}
Assume the hypotheses of Lemma \ref{lemma:mL4 der}.
Then 
\begin{equation} \label{h<F(u)}
\| \tilde\Psi \tilde\Phi 
(\Pi_N \tilde\mL_{4} \Pi_N)\inv \Pi_N \tilde\Phi\inv \tilde\mM\inv \tilde\Psi\inv P_\e f \|_s 
\leq C(s,K) (\| f \|_{s+5/2} + \| u \|_{s + 17 + 5/2} \| f \|_2)
\end{equation}
for $2 \leq s \leq r-12-3/2$.
\end{lemma}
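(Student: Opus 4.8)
The plan is to estimate the composite operator in \eqref{h<F(u)} factor by factor, propagating the tame bounds of Propositions \ref{prop:Psi mM mR3} and \ref{prop:Phi mR} and of Lemma \ref{lemma:inversione mL4}. Recall from \eqref{catenella} that $\tilde\mL_4$ is the operator obtained from $\tilde\mL$ by conjugation with the chain $\tilde\Psi,\tilde\mM,\tilde\Phi$; thus, up to the truncations $\Pi_N$ and the harmless replacement of $\tilde\Psi\tilde\mM\tilde\Phi$ by $\tilde\Psi\tilde\Phi$ (they differ by $O(\e^3)$), the expression in \eqref{h<F(u)} is essentially $\tilde\mL\inv P_\e f$ at the $N$-truncated level, and the content of the lemma is that this composite recovers the factor $\e^{-2}$ that \eqref{inversa con eps-2} loses on $V_0$.

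First I would dispose of the two rightmost factors. Since $\Pi_N$ commutes with $P_\e=\e^2\Pi_V+\Pi_W$ and, by \eqref{S property}, $\tilde\Phi\inv\tilde\mM\inv\tilde\Psi\inv=I+S$ with $\|Sg\|_s\le\e^2 C(s,K)(\|g\|_{s+1}+\|u\|_{s+12}\|g\|_2)$, I write $\Pi_N\tilde\Phi\inv\tilde\mM\inv\tilde\Psi\inv P_\e f=P_\e\Pi_N f+\Pi_N SP_\e f$, with $\|P_\e f\|_s\le\|f\|_s$ and $\|SP_\e f\|_s\le\e^2 C(s,K)(\|f\|_{s+1}+\|u\|_{s+12}\|f\|_2)$. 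Now I apply $(\Pi_N\tilde\mL_4\Pi_N)\inv$ to each summand separately. On $P_\e\Pi_N f$ the input has the special form $P_\e(\cdot)$, so the sharp bound \eqref{inversa ottima} applies without any power of $\e$. On $\Pi_N SP_\e f$ I use the general bound \eqref{inversa con eps-2}, which carries an $\e^{-2}$; this $\e^{-2}$ is exactly compensated by the $\e^2$ in the bound for $SP_\e f$, and the single extra derivative in that bound is precisely what upgrades the loss $s+3/2$ of \eqref{inversa ottima} to the loss $s+5/2$ of \eqref{h<F(u)}. A Moser interpolation — together with $\|u\|_{22}\le K$, which lets me absorb the low-index $u$-factors that appear — collapses the resulting mixed products into the form $\|f\|_{s+5/2}+\|u\|_{s+17+5/2}\|f\|_2$.

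It then remains to apply the front operator $\tilde\Psi\tilde\Phi$, which by \eqref{stima Psi} and \eqref{stima Phi-I} loses no derivative on the main term: $\|\tilde\Psi\tilde\Phi g\|_s\le C(s,K)(\|g\|_s+\|u\|_{s+12}\|g\|_2)$. Inserting the bound for $g$ obtained above and interpolating once more yields \eqref{h<F(u)}. One should also check the admissible range of $s$: each invoked estimate requires $s$ below a threshold of the form $r-c$, and the binding constraint comes from \eqref{inversa ottima}--\eqref{inversa con eps-2}, namely $s\le r-12-3/2$, which is exactly the range in the statement. The main obstacle is the bookkeeping: one has to organize the powers of $\e$ so that every $\e^{-2}$ produced by the $V_0$-inversion is matched by an $\e^2$ from the ``$I+S$'' splitting of the transformation operators (this splitting is what costs one derivative, hence the $5/2$), and one has to choose the interpolation exponents so that, after all the compositions, the high-order norm of $f$ is exactly $s+5/2$ and the coefficient of $\|f\|_2$ is exactly $\|u\|_{s+17+5/2}$.
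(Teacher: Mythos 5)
Your proposal follows essentially the same approach as the paper: split $\tilde\Phi^{-1}\tilde\mM^{-1}\tilde\Psi^{-1} = I + S$ via \eqref{S property}, use $\Pi_N P_\e = P_\e \Pi_N$ to feed $P_\e \Pi_N f$ into the sharp bound \eqref{inversa ottima} and $\Pi_N S P_\e f$ into \eqref{inversa con eps-2} (whose $\e^{-2}$ is cancelled by the $\e^2$ in $S$, while the extra derivative in \eqref{S property} accounts for the $5/2$), and finally apply $\tilde\Psi\tilde\Phi$ via \eqref{stima Psi} and \eqref{stima Phi-I}. The only difference is presentational: the paper estimates the front operator first and then decomposes, whereas you decompose first; this is immaterial.
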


\begin{proof}[Proof of Lemma \ref{lemma:h<F(u)}]
By \eqref{stima Psi} and \eqref{stima Phi-I}, the term on the left-hand side in \eqref{h<F(u)} is 
\[
\leq C(s,K) 
\big( \| (\Pi_N \tilde\mL_{4} \Pi_N)\inv \Pi_N \tilde\Phi\inv \tilde\mM\inv \tilde\Psi\inv P_\e f \|_s 
+ \| u \|_{s+12} \| (\Pi_N \tilde\mL_{4} \Pi_N)\inv \Pi_N \tilde\Phi\inv \tilde\mM\inv \tilde\Psi\inv P_\e f \|_2 \big)
\]
for $2 \leq s \leq r-7$. 
Write $\tilde\Phi\inv \tilde\mM\inv \tilde\Psi\inv$ as $I + S$, where 
$S$ satisfies \eqref{S property}.
Since $\Pi_N P_\e = P_\e \Pi_N$, 
\[
(\Pi_N \tilde\mL_{4} \Pi_N)\inv \Pi_N \tilde\Phi\inv \tilde\mM\inv \tilde\Psi\inv P_\e f 
= 
(\Pi_N \tilde\mL_{4} \Pi_N)\inv P_\e \Pi_N f 
+
(\Pi_N \tilde\mL_{4} \Pi_N)\inv \Pi_N S P_\e f,
\]
then use \eqref{inversa ottima} 
for $(\Pi_N \tilde\mL_{4} \Pi_N)\inv P_\e \Pi_N f$, 
and use \eqref{inversa con eps-2}, \eqref{S property} 
for $(\Pi_N \tilde\mL_{4} \Pi_N)\inv \Pi_N S P_\e f$.
\end{proof}

\section{\large{Nash-Moser iteration and Cantor set of parameters}}
\label{sec:iteration}

Let 
\begin{equation} \label{Nn}
\chi := 3/2, \quad 
\para > 0, \quad 
N_n := \exp( \para \chi^n ), \quad 
n \in \N,
\end{equation}
with $N_0 = \exp(\para)$ sufficiently large to have $\mK \subseteq [-N_0,N_0]$ ($\mK$ is defined in Section \ref{sec:bifurcation}). 
Consider the corresponding increasing sequence of finite-dimensional subspaces $Z_n := Z_{N_n}$, with respective projections $\Pi_n := \Pi_{N_n}$.
For all $s,\a \geq 0$, $\Pi_n$ enjoys the smoothing properties
\begin{align} 
 \| \Pi_n u \|_{s+\a} & \,\leq\, N_n^\a \| u \|_s
\quad \forall u \in H^s,
\label{S1} \\
 \| \Pi_n^\perp u \|_s  & \,\leq\, N_n^{-\a} \, \| u \|_{s+\a} 
\quad \forall u \in H^{s+\a},
\label{S2} 
\end{align}
where $\Pi_n^\perp = I - \Pi_n$. 
Note that \eqref{S1}, \eqref{S2} hold even if $N_n > 0$ is not an integer number.
 
In the previous sections we have proved the transformation
\begin{equation} \label{conj completa}
F'(u,\e) = P_\e\inv \mL(u,\e) = P_\e\inv \tilde\mL(u,\e) 
= P_\e\inv \tilde\Psi \tilde\mM \tilde\Phi \tilde\mL_4 \tilde\Phi\inv \tilde\Psi\inv
\end{equation}
where $\tilde\Psi, \tilde\mM, \tilde\Phi, \tilde\mL_4$ all depend on $(u,\e)$.
Following a suitable Nash-Moser scheme, we construct a sequence $(u_n) \subset C^\infty(\T^2)$ of $\e$-dependent trigonometric polynomials by setting $u_0 := \bar v_2$ as defined in Section \ref{sec:bifurcation}, $h_0 := 0$, and 
\begin{equation} \label{sequence un}
u_{n+1} := u_n + h_{n+1}, \quad 
h_{n+1} := - \Pi_{n+1} \tilde\Psi_n \tilde\Phi_n 
(\Pi_{n+1} \tilde\mL_{4,n} \Pi_{n+1})\inv \Pi_{n+1} \tilde\Phi_n\inv \tilde\mM_n\inv \tilde\Psi_n\inv P_\e F(u_n),
\end{equation}
provided that the inverse operator $\mI_n := (\Pi_{n+1} \tilde\mL_4(u_n) \Pi_{n+1})\inv$ is well defined on $Z_{n+1}$. 
The notation in \eqref{sequence un} means
\[
\tilde\mL_{4,n} := \tilde\mL_4(u_n) = \tilde\mL_4(u_n(\e),\e), \quad 
\Psi_n := \Psi(u_n) = \Psi(u_n(\e),\e), 
\]
and similarly for $\tilde\mM, \tilde\Phi$. 
Also, $\mL_{4,n} = \mD_n + \mR_n$. 
We omit to write explicitly the dependence on $\e$ only to shorten the notation.
At a first glance, \eqref{sequence un} could seem an unusual and excessively complicated Nash-Moser scheme. However, in some sense it is ``the most natural'' for the present problem, as the ``normal form'' for the linearized operator is given by $\mL_{4,n} = \mD_n + \mR_n$, 
therefore it is natural to impose Diophantine conditions on the eigenvalues of $\mD_n$ and to insert smoothing operators $\Pi_n$ before and after it.

With $h_{n+1}$ defined by \eqref{sequence un}, one has 
$h_{n+1} = - \Pi_{n+1} \tilde\Psi_n \tilde\Phi_n \mI_n \Pi_{n+1} c_n$, 
\begin{equation} \label{rn}
F(u_n) + F'(u_n) h_{n+1} 
= r_{n} 
:= P_\e\inv \tilde\Psi_n \tilde\mM_n \tilde\Phi_n 
\big\{ \Pi_{n+1}^\perp c_n - \Pi_{n+1}^\perp\, \tilde\mR_n \Pi_{n+1} \mI_n \Pi_{n+1} c_n  + \tilde\mL_{4,n} b_n \big\}
\end{equation}
where
\begin{equation*} 
c_n := \tilde\Phi_n\inv \tilde\mM_n\inv \tilde\Psi_n\inv P_\e F(u_n),
\qquad
b_n := \tilde\Phi_n\inv \tilde\Psi_n\inv \Pi_{n+1}^\perp \, \tilde\Psi_n \tilde\Phi_n \mI_n \Pi_{n+1} c_n.
\end{equation*}
\eqref{rn} follows directly from \eqref{sequence un}, 
and is proved in Section \ref{sec:appendix proofs}. 
Hence 
\begin{equation} \label{F=r+Q}
F(u_{n+1}) = r_n + Q(u_n,h_{n+1}),
\end{equation}
where $Q$ is defined in \eqref{def Q}.

By Lemma \ref{lemma:inversione mL4}, $\Pi_{n+1} \tilde\mL_4(u_n) \Pi_{n+1}$ is invertible if the eigenvalues $\lm_{l,j}(u_n,\e)$ of $\mD_n$ satisfy the Diophantine condition \eqref{dioph base} for $u=u_n$ and $N=N_{n+1}$. Let $\mW_n := \mW_{N_n}$. 
Define recursively the set of the ``good'' parameters $\e$, those for which \eqref{dioph base} holds: let 
$\mG_0 := (0,\e_0)$, and define 
\begin{equation}  \label{An+1}
\mG_{n+1} := 
\Big\{ \e \in \mG_n : |\lm_{l,j}(u_n,\e)| > \frac{1}{2 \la j \ra^3} 
\quad \forall (l,j) \in \mW_{n+1} \Big\}, 
\quad n \geq 0.
\end{equation}
$\mG_{n}$ is the set of the parameters $\e$ for which  $(u_k,h_k,A_k,\mG_k)$ can be defined recursively for $k=0,\ldots,n$. 
On the contrary, after constructing $(u_k,h_k,A_k,\mG_k)$ for $k \leq n$, 
\begin{equation*} 
\mB_{n+1} := \mG_{n} \setminus \mG_{n+1} 
\end{equation*}
is the set of the ``bad'' parameters $\e$ for which the Diophantine condition \eqref{dioph base} on the eigenvalues $\lm_{l,j}(u_n,\e)$ is violated on $|l|+|j| \leq N_{n+1}$, 
the inverse of $(\Pi_{n+1} \mL_4(u_n) \Pi_{n+1})$ is not well-defined, 
$h_{n+1}$ cannot be defined by \eqref{sequence un}, 
and the recursive construction stops.
Therefore at the $n$-th step we eliminate the bad set $\mB_{n+1}$, and restrict the parameter set to the subset $\mG_{n+1} \subseteq \mG_{n}$. 
For convenience, put $\mB_0 := \emptyset$.

\begin{proposition}[Nash-Moser induction and measure estimate for the  parameter set]
\label{prop:NM}
There exist universal constants $r_0, s_0 > 0$ and constants $C, C', c_0, \para, \parb, \e^*_0 >0$ depending only on $\bar v_1, K_{g,r_0}$ such that 
if $\mG_0 = (0,\e_0)$, $\e_0 \leq \e^*_0$, $r \geq r_0$, and $\para$ defines $N_n$ in \eqref{Nn}, 
then the following induction hold.

Let $(P_n)=\{(P_n)(i), (P_n)(ii)\}$, $n \geq 1$, be the following set of statements.

\begin{itemize} 

\item 
$(P_n)(i)$. 
$\mG_{n}$ is an open set. 
The Lebesgue measure of $\mB_{n}$ satisfies $|\mB_{n}| \leq \e_0^2 C b_n$, 
where the sequence $(b_n)$ satisfies $\sum_{n=0}^\infty b_n = C' < \infty$. 

\item 
$(P_n)(ii)$. 
For every $\e \in \mG_n$, $h_n(\e) \in Z_n$ is well-defined. 
$h_n : \mG_{n} \to Z_n$, $\e \mapsto h_n(\e)$ 
is of class $C^1$ as a function of $\e$, with
\begin{gather}  \label{stima hk}
\| h_n(\e) \|_{s_0} < \exp(- \parb \chi^n), 
\quad 
\| \partial_\e h_n(\e) \|_{s_0} \leq \e^{-1} \exp(- \parb \chi^n).
\end{gather}
\end{itemize}

$(P_1)$ holds. 
If $(P_n)$ holds, then, using \eqref{sequence un},\eqref{An+1} to define $h_{n+1}$ and $\mG_{n+1}$, $(P_{n+1})$ also holds.

As a consequence, the Cantor set $\mG_\infty := \bigcap_{n \geq 0} \mG_n \subset (0,\e_0)$ has Lebesgue measure 
\[
| \mG_\infty | \geq \e_0 (1 - \e_0 C).
\]
For every $\e \in \mG_\infty$, the sequence $(u_n(\e))$ converges in $H^{s_0}(\T^2)$ to a limit $u_\infty(\e)$, which solves 
\[
F(u_\infty(\e),\e) = 0.
\]
Moreover, $u_\infty(\e) \in H^s(\T^2)$ for every $s$ in the interval $s_0 \leq s < (r+c_0)/2$.  

If $g_i$, $i=0,1,2$ in \eqref{g1 g2},\eqref{g0} is of class $C^\infty$, then also $u_\infty(\e) \in C^\infty(\T^2)$. 

$s_0$, $r_0$ and $c_0$ can be explicitly calculated: $s_0 = 22$, $c_0 = 28$; for $r_0$ see \eqref{parametri 800} and below.
\end{proposition}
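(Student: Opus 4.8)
The plan is to run a Nash-Moser quadratic iteration and a measure estimate for the excised parameter sets in parallel, proving $(P_n)$ by induction on $n$. First I would fix the free parameters in the right order: choose $r_0$ large enough that every ``$r - c$'' upper index appearing in the lemmata of Sections \ref{sec:reduction}--\ref{sec:inversion} (in particular $r - 18$ in Lemma \ref{lemma:mL4 der} and $r - 12 - 3/2$ in Lemma \ref{lemma:inversione su WN}) stays comfortably above the working index $s_0 = 22$ and above the large fixed indices ($14$, $16$, $17$, $23$, $\dots$) that occur in the tame estimates; then pick $\parb$ and $\para$ (the two exponents governing $N_n = \exp(\para \chi^n)$ and the decay $\exp(-\parb \chi^n)$) so that the standard Nash-Moser inequalities close, i.e. so that the quadratic gain $\chi = 3/2$ beats the loss of derivatives coming from the smoothing estimates \eqref{S1},\eqref{S2} combined with \eqref{stima Q generica} and \eqref{h<F(u)}. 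This is bookkeeping of the type $\parb \chi < \para(\chi - 1)\cdot(\text{fixed number of derivatives lost})$, and it is where $s_0 = 22$, $c_0 = 28$ get pinned down.

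For the inductive step I would proceed as follows. Given $(P_n)$, for $\e \in \mG_{n+1}$ the Diophantine condition \eqref{dioph base} holds at $u = u_n$, $N = N_{n+1}$, so Lemma \ref{lemma:inversione mL4} (together with Lemmata \ref{lemma:inversione su VN}, \ref{lemma:inversione su WN}) makes $\mI_n$ well-defined, and \eqref{sequence un} defines $h_{n+1} \in Z_{n+1}$; by Proposition \ref{prop:Phi mR} and Proposition \ref{prop:Psi mM mR3} the operators $\tilde\Psi_n, \tilde\mM_n, \tilde\Phi_n$ and their inverses are tame, so $h_{n+1}$ is $C^1$ in $\e$. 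The core estimate is to bound $\| F(u_{n+1}) \|_{s_0}$ via \eqref{F=r+Q}: for the linearized residual $r_n$ in \eqref{rn} I would use \eqref{h<F(u)} on the $\Pi_{n+1}$-part and the smoothing \eqref{S2} on the $\Pi_{n+1}^\perp$-parts, losing a fixed number of derivatives and gaining a negative power of $N_{n+1}$; for the quadratic term $Q(u_n, h_{n+1})$ I would use \eqref{stima Q generica}. Interpolating the resulting estimates with the high-norm bound on $u_n$ (which must be carried as an auxiliary part of the induction, controlled by $\| u_n \|_s \leq N_{n-1}^{\text{const}}$ type bounds so that the $\| u \|_{s+\text{const}}$ factors in all the tame estimates are harmless) gives $\| F(u_{n+1}) \|_{s_0} \lesssim \exp(-\parb \chi^{n+1})$ and hence \eqref{stima hk} at step $n+1$ through \eqref{sequence un} and \eqref{h<F(u)} again. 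The $\partial_\e$ bound is obtained by differentiating \eqref{sequence un}, using Lemma \ref{lemma:mL4 der} for $\partial_\e \mI_n$ and $\partial_\e (\tilde\Psi_n \tilde\Phi_n)$, and the same interpolation scheme; the extra $\e^{-1}$ in \eqref{stima hk} matches the $\e^{-1}$, $\e^{-3}$ losses there, which is exactly why the $P_\e$-weighted estimates \eqref{inversa ottima} (and their $\e$-derivative versions) were arranged — they absorb the $\e^{-2}$ of \eqref{inv VLV}.

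For the measure estimate I would show $|\mB_{n+1}| \leq \e_0^2 C b_{n+1}$ with $b_{n+1}$ summable. The set $\mB_{n+1}$ is the union over $(l,j) \in \mW_{n+1}$ of the ``resonant'' sets where $|\lm_{l,j}(u_n,\e)| \leq \tfrac12 \langle j\rangle^{-3}$. Using \eqref{eigenvalues} and $\om = 1 + 3\e^2$, the map $\e \mapsto \lm_{l,j}(u_n,\e)/i$ has derivative in $\e$ bounded below: the dominant term is $\partial_\e(\om l) = 6\e l$, and the corrections $\partial_\e \mu_i$ are $O(\e^2)$ by \eqref{stima mu2}--\eqref{stima mu-2} (the $\partial_\e$ bounds there), plus the $\partial_\e u_n$-contributions which are $\exp(-\parb\chi^n)$-small by \eqref{stima hk} composed with \eqref{stima mu2} etc.; so for $(l,j) \in \mW_{n+1}$ one has $|l| \sim |j|^2$ or $|l| \gtrsim 1$, whence $|\partial_\e \lm_{l,j}| \gtrsim \e |l| \gtrsim \e$, giving each resonant set measure $\lesssim (\e \langle j\rangle^3)^{-1}$, and only indices with $|l| \lesssim |j|^2 \lesssim N_{n+1}^2$ and $|j|$ in a dyadic range beyond $N_n$ actually contribute new bad parameters (the smaller indices were already excised at earlier steps because $u_{n-1}$ and $u_n$ are close in low norm by \eqref{stima hk}, so $\lm_{l,j}(u_n,\e)$ and $\lm_{l,j}(u_{n-1},\e)$ differ by less than, say, $\tfrac14\langle j\rangle^{-3}$ for $|j| \leq N_n$). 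Summing the geometric-type series over $|j| > N_n$ gives $|\mB_{n+1}| \lesssim \e_0^2 N_n^{-1}$ (or a similar negative power of $N_n$), which is summable since $N_n \to \infty$ super-exponentially; this is the content of $(P_{n+1})(i)$, and openness of $\mG_{n+1}$ follows from continuity of $\lm_{l,j}(u_n,\cdot)$. Finally, $\mG_\infty = \bigcap_n \mG_n$ has $|\mG_\infty| \geq \e_0 - \sum_n |\mB_n| \geq \e_0(1 - \e_0 C)$; for $\e \in \mG_\infty$, \eqref{stima hk} gives $\sum_n \|h_n\|_{s_0} < \infty$, so $u_n \to u_\infty$ in $H^{s_0}$ with $F(u_\infty,\e) = 0$ by passing to the limit in \eqref{F=r+Q}, and the higher regularity $u_\infty \in H^s$ for $s_0 \leq s < (r+c_0)/2$ (respectively $C^\infty$ when $g_i \in C^\infty$) comes from a standard bootstrap: re-run the high-norm part of the iteration with the index $s$ in place of $s_0$, which is legitimate as long as $s + (\text{fixed loss}) \leq r$, i.e. $2s - c_0 < r$.

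\medskip

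The main obstacle I expect is not any single estimate but the \emph{simultaneous consistency} of the parameter choices: the iteration \eqref{sequence un} inserts the smoothings $\Pi_{n+1}$ in a nonstandard position (before and after $\tilde\mL_{4,n}$, with the conjugating maps $\tilde\Psi_n\tilde\Phi_n$ outside), so the derivative losses accumulated in one step are those of $\tilde\Psi, \tilde\mM, \tilde\Phi$ (about $12$--$17$ derivatives, cf. \eqref{stima Phi-I}, \eqref{stima mR}) \emph{plus} those of $\mI_n$ (another $3/2 + 17$ or so, cf. \eqref{inversa con eps-2}) \emph{plus} the $\e$-derivative losses $\e^{-1}, \e^{-3}$ of Lemma \ref{lemma:mL4 der}; one must verify that after interpolation against the slowly-growing high norm $\|u_n\|_s$ the net gain per step is still a fixed negative power of $N_n$ large enough to beat $\exp(-\parb\chi^n) / \exp(-\parb\chi^{n+1}) = \exp(\parb\chi^n(\chi-1))$, which forces $\para$ large relative to $\parb$ and $\parb$ large relative to the total derivative loss — a chain of inequalities that determines $s_0 = 22$, $c_0 = 28$, and the lower bound $r_0$ for $r$.
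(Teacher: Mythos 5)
Your overall strategy matches the paper's: run the Nash-Moser iteration on \eqref{sequence un} in low norm, carry a high-norm bound $\| h_k \|_{s+\b+2}$ in parallel, fix $\a$, $\b$, $\parb$, $\para$ in that order so the quadratic gain $\chi = 3/2$ beats the finite derivative losses of $\tilde\Psi,\tilde\mM,\tilde\Phi, \mI_n$, get the $C^1$ dependence on $\e$ from Lemma \ref{lemma:mL4 der}, and observe that the $P_\e$-weighted inverses \eqref{inversa ottima} absorb the $\e^{-2}$ of \eqref{inv VLV}. That part is essentially correct, and the bootstrap for $u_\infty \in H^s$ with $s < (r+c_0)/2$ by interpolation between the convergent low norm and the slowly-growing high norm is also what the paper does.

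However, your measure estimate has genuine gaps, all stemming from the derivative lower bound for $\e \mapsto \lm_{l,j}(u_n(\e),\e)$. You claim $|\pa_\e \lm_{l,j}| \gtrsim \e |l| \gtrsim \e$ with dominant term $\pa_\e(\om l) = 6\e l$. First, $\pa_\e \mu_1 = 6b\e + O(\e^2)$ is $O(\e)$, not $O(\e^2)$, so the term $(\pa_\e\mu_1) j$ is not of lower order than you assume, and $6\e l \approx -6\e j|j|$ does not trivially dominate: factoring out $\om$ as the paper does, one finds $\pa_\e p_j^n(\e) = j|j|\e(-6 + 6b/|j| + O(\e))$, and the lower bound $|\!-6 + 6b/|j|| \geq 6\d$ is precisely the non-degeneracy estimate $|b - |j|| \geq \d |j|$ from Proposition \ref{prop:bif}, which you never invoke; without it (or without using the cut-off $\e|j| \geq C$ to force $|j|$ large), the claimed transversality fails near $|j| \approx b$. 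Second, your weaker bound $\gtrsim \e$ gives a per-set measure $\lesssim (\e\langle j\rangle^3)^{-1}$, and summing $\sum_{N_n < |j| \leq N_{n+1}} \e^{-1}|j|^{-3} \cdot |j|^2$ diverges logarithmically, not to $\e_0^2/N_n$; the paper obtains the needed $|\tilde\Om_{l,j}^n| \leq C|j|^{-4}$ only after combining the cut-off (so $\e|j| \geq C$ on the bad set) with the sharper $|\pa_\e p_j^n| \geq \d\e|j|^2 \geq C|j|$. Third, your handling of the ``old'' indices $(l,j) \in \mW_n$ is incorrect: you argue that since $|\lm_{l,j}(u_n,\e) - \lm_{l,j}(u_{n-1},\e)| < \tfrac14\langle j\rangle^{-3}$, these ``don't contribute.'' With the fixed threshold $\tfrac{1}{2}\langle j\rangle^{-3}$ that reasoning only gives $|\lm_{l,j}(u_n,\e)| > \tfrac14\langle j\rangle^{-3}$, which does not verify \eqref{An+1}. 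The paper instead shows that for $(l,j) \in \mW_n$ and $\e \in \mG_n$, the bad set $\tilde\Om_{l,j}^n$ lies in the thin annulus $\{\, \tfrac{1}{2|j|^3} < |l + p_j^{n-1}(\e)| \leq \tfrac{1}{2|j|^3} + C\e^4|j|^2\|h_n\|_{12} \,\}$, then uses monotonicity of $p_j^{n-1}$ in $\e$ to bound its measure by $C\e_0^4 N_n^4 \exp(-\parb\chi^n)$, summable because $\parb - 4\para > 0$; either this transversality argument, or a switch to a decreasing sequence of Diophantine constants $\g_n \downarrow \g_\infty > 0$, would be needed to close your version.
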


We split the proof of Proposition \ref{prop:NM} into two parts: 
the Nash-Moser sequence $(P_n)(ii)$ with its regularity in subsection \ref{subsec:NM}, then the measure estimate $(P_n)(i)$ for the parameter set in subsection \ref{subsec:measure}

\subsection{Proof of the Nash-Moser iteration} \label{subsec:NM}

\emph{First step}. Let us prove $(P_1)(ii)$. 
For $\e \in \mG_1$, \eqref{sequence un} defines $h_1 = h_1(\e)$.  
By \eqref{stima v2}, the condition \eqref{pallata} holds. 
By \eqref{stima v2}, if $22 \leq r$, then 
$\| \bar v_2(\e) \|_{22} \leq C$ for all $\e \in (0,\e_0)$, for some constant $C$. Take this constant $C$ as the ``$K$'' in all the lemmata of the previous sections, so that the assumption $K \geq \| u \|_{22}$ is satisfied for $u = u_0 = \bar v_2(\e)$, for all $\e \in (0,\e_0)$.  
In this way, to indicate the dependence on $K$ in all the constants $C(s,K)$ is redundant, and we simply write $C(s,K) = C(s)$. 
By \eqref{sequence un}, \eqref{h<F(u)}, \eqref{stima v2} and \eqref{F(u0)},
\[
\| h_{1} \|_s 
= \| \tilde\Psi_0 \tilde\Phi_0 \mI_0 \Pi_{1} c_0 \|_{s} 
\leq C(s) \big( \| F(u_0) \|_{s+5/2} 
+ \| u_0 \|_{s+17+5/2} \| F(u_0) \|_2 \big)
\leq \e C(s)
\]
if $s+17+5/2 \leq r$.
Hence the first inequality in $(P_1)(iii)$ holds if 
\begin{equation} \label{parametri 00}
\e_0 C(s) \leq \exp(- \parb \chi).
\end{equation}
$\pa_\e h_1$ is obtained by differentiating every term in formula \eqref{sequence un} with respect to $\e$ and applying the estimates for $\pa_\e \tilde\Psi$, $\pa_\e \tilde\Phi$, $\pa_\e \{ (\Pi_1 \tilde\mL_4(u_0(\e),\e) \Pi_1)\inv \}$, etc; using \eqref{stima v2} for $\pa_\e \bar v_2$, and \eqref{F(u0)} for $\pa_\e \{ F(\bar v_2(\e),\e) \}$, we get
\[
\| \pa_\e h_1(\e) \|_{s} \leq C(s)
\]
for $\e \in (0,\e_0)$, $s +17 + 5/2 \leq r$. 
Therefore the second inequality in $(P_1)(iii)$ holds if \eqref{parametri 00} holds (with a possibly different constant $C(s)$, as usual).

\emph{Inductive step}. 
Now assume that $(P_n)$ holds, $n \geq 1$, and prove $(P_{n+1})(ii)$. 
By \eqref{stima hk}, 
\begin{equation} \label{sommina un s}
\| u_n \|_{s} 
\leq 
\| u_0 \|_{s} + \sum_{k=1}^n \| h_k \|_{s} 
\leq 
\| \bar v_2 \|_{s} + C(\parb),
\quad 
C(\parb) := \sum_{k=1}^\infty \exp(- \parb \chi^k).
\end{equation}
Note that $C(\parb)$ is independent on $n$, it is decreasing as a function of $\parb$, and 
$C(\parb) \to 0$ as $\parb \to +\infty$. 
Hence, for $s \geq 22$, $\| u_n \|_{22} \leq \| \bar v_2 \|_{22} + C(\parb) 
\leq 2 \| \bar v_2 \|_{22} = C$ for all $\e \in (0,\e_0)$ 
if 
\begin{equation} \label{parametri 10}
\parb \geq C,
\end{equation}
for some $C>0$. 
As in the previous step, take this constant $C$ as the ``$K$'', and replace $C(s,K)$ with $C(s)$ in all the lemmata of the previous sections. 
Moreover, \eqref{pallata} is satisfied for $u=u_n$ if $\e_0$ is sufficiently small, independently on the parameters. 
Also, $\| u_n \|_s \leq C(s)$.

By \eqref{sequence un}, \eqref{S1} and \eqref{h<F(u)}, 
for $\a \geq 0$, $2 \leq s-\a \leq r-12-3/2$,  
\begin{align} \notag
\| h_{n+1} \|_s 
& \leq N_{n+1}^\a \| \tilde\Psi_n \tilde\Phi_n \mI_n \Pi_{n+1} c_n \|_{s-\a} 
\\
& \leq N_{n+1}^\a C(s-\a) ( \| F(u_n) \|_{s-\a+5/2} 
+ \| u_n \|_{s-\a+17+5/2} \| F(u_n) \|_2 ).
\label{radio 1}
\end{align}
Take $\a := 17 + 5/2$, and denote $s':= s - 17$. Since $s' \geq 2$, 
\[
\| h_{n+1} \|_s 
\leq \eqref{radio 1} 
\leq N_{n+1}^\a C(s)(\| F(u_n) \|_{s'} + \| u_n \|_{s} \| F(u_n) \|_2) 
\leq N_{n+1}^\a C(s) \| F(u_n) \|_{s'} 
\]
because $\| u_n \|_s \leq C(s)$ by \eqref{sommina un s}. 
By \eqref{F=r+Q}, $F(u_n) = r_{n-1} + Q(u_{n-1}, h_n)$. Therefore
\begin{gather} \label{12 12}
\| h_{n+1} \|_s \leq A_r + A_Q,  \qquad 
A_r := N_{n+1}^\a C(s) \| r_{n-1} \|_{s'}, 
\quad
A_Q := N_{n+1}^\a C(s) \| Q(u_{n-1}, h_n) \|_{s'}.
\end{gather}
By \eqref{rn}, $r_{n-1}$ is the sum of 3 terms, say (I)+(II)+(III). 
The first one is 
\[
\text{(I)} = P_\e\inv \tilde\Psi_{n-1} \tilde\mM_{n-1} \tilde\Phi_{n-1} \Pi_n^\perp 
\tilde\Phi_{n-1}\inv \tilde\mM_{n-1}\inv \tilde\Psi_{n-1}\inv P_\e F(u_{n-1}).
\]
Using \eqref{S property}, like in the proof of Lemma \ref{lemma:h<F(u)}, no negative power of $\e$ appears in the estimate of (I). Using \eqref{S2} to deal with $\Pi_n^\perp$, for $\b \geq 0$, 
$ 2 \leq s'+\b \leq r-8$, one has 
\[
\| \text{(I)} \|_{s'} 
\leq C(s+\b) N_n^{-\b} 
( \| F(u_{n-1}) \|_{s'+\b+2} +  \| u_{n-1} \|_{s'+\b+13} \| F(u_{n-1}) \|_{2} ).
\]
The same argument applies to (II) and (III), whence
\[
\| r_{n-1} \|_{s'} 
\leq C(s'+\b) N_n^{-\b} 
( \| F(u_{n-1}) \|_{s'+\b+8} +  \| u_{n-1} \|_{s'+\b+19} \| F(u_{n-1}) \|_{2} ),
\]
$ 2 \leq s'+\b \leq r-16$. Applying \eqref{tame F basic}, 
\begin{equation} \label{radio 2}
\| r_{n-1} \|_{s'} 
\leq C(s'+\b) N_n^{-\b} (1 + \| u_{n-1} \|_{s'+\b+19})
= C(s+\b) N_n^{-\b} (1 + \| u_{n-1} \|_{s+\b+2}).
\end{equation}
Now estimate the ``high norm'' $B_k := \| h_{k} \|_{s+\b+2}$. 
To each $k=0,\ldots,n$, apply \eqref{radio 1} with $s+\b+2$ instead of $s$, 
and use \eqref{tame F basic}: 
for $2 \leq (s+\b+2)-\a \leq r-12-3/2$,
\begin{align} \label{radio 3}
\notag 
\| h_{k+1} \|_{s+\b+2}  
& \leq N_{k+1}^{\a} C(s+\b+2-\a) ( \| F(u_k) \|_{s+\b+2-\a+5/2} 
+ \| u_k \|_{s+\b+2-\a+17+5/2} \| F(u_k) \|_2 )
\\
& \leq N_{k+1}^{\a} C(s+\b) ( 1 + \| u_k \|_{s+\b+2})
\end{align}
where, as above, $\a := 17+5/2$. 
For \eqref{stima v2}, $\| u_0 \|_{s+\b+2} \leq C(s+\b)$ if $s+\b+2 \leq r$. 
Then, by \eqref{radio 3}, 
$B_1 = \| h_{1} \|_{s+\b+2} \leq N_{1}^{\a} C(s+\b)$, and 
\begin{equation}
\label{Bk+1}
B_{k+1} 
\leq N_{k+1}^{\a} C(s+\b) \Big( 1 + \| u_0 \|_{s+\b+2} + \sum_{j=1}^k \| h_j \|_{s+\b+2} \Big)
\leq N_{k+1}^{\a} C(s+\b) \Big( 1 + \sum_{j=1}^k B_j \Big)
\end{equation}
for $1 \leq k \leq n$. 
By \eqref{Nn}, this implies that 
\begin{equation} \label{Bn exp}
\| h_{k} \|_{s+\beta+2} = B_{k} \leq \exp(\lmcioeparb \chi^{k}), 
\end{equation}
$k=1,\ldots,n+1$. 
For, by induction: \eqref{Bn exp} holds for $k=1$ if 
$C(s+\b) \exp[(\para \a - \lmcioeparb)\chi]  \leq 1$, 
namely if $(\lmcioeparb - \para \a)$ is larger than some constant depending on $(s+\b)$. 
Suppose that \eqref{Bn exp} holds for all $j \in [1,k]$, $k \geq 1$. 
For $\lmcioeparb \geq 1$, 
\[
1 + \sum_{j=1}^k \exp(\lmcioeparb \chi^{j}) \leq C \exp(\lmcioeparb \chi^{k}), 
\quad \forall k \in \N,
\]
for some universal constant $C$.  
Then, by \eqref{Bk+1}, \eqref{Bn exp} also holds for $k+1$ if 
$C(s+\b) \exp[\chi^k (\para \a \chi - \lmcioeparb \chi + \lmcioeparb)]$ $ \leq 1$,
namely if 
\begin{equation} \label{parametri 80}
\lmcioeparb - 3 \para \a \geq C(s+\b)
\end{equation} 
for some $C(s+\b)>0$, and \eqref{Bn exp} is proved. 
Thus $\| u_{n-1} \|_{s+\b+2} \leq C(s+\b) \exp(\lmcioeparb \chi^{n-1})$, and, by \eqref{radio 2},
\begin{equation*} 
\| r_{n-1} \|_{s'} \leq C(s+\b) \exp[ \chi^{n-1}(\lmcioeparb - \b \para \chi)], 
\quad 
A_r \leq C(s+\b) \exp[ \chi^{n-1} (\lmcioeparb + \a \para \chi^2 - \b \para \chi)].
\end{equation*}
As a consequence, $A_r \leq \frac{1}{2} \exp(-\parb \chi^{n+1})$ if
\begin{equation} \label{parametri 100}
\para (\b \chi - \a \chi^2) - \lmcioeparb (1 + \chi^2) \geq C(s+\b)
\end{equation}
for some $C(s+\b) > 0$. 

Estimate $A_Q$. 
Since $\| u_{n-1} \|_{s'+2} = \| u_{n-1} \|_{s-15} \leq C(s)$, by \eqref{stima Q generica} we have $A_Q \leq N_{n+1}^\a C(s) \| h_n \|_{s}^2$. 
This is $\leq \frac{1}{2} \exp(-\parb \chi^{n+1})$ if 
\begin{equation} \label{parametri 90}
\parb - 3 \a \para \geq C(s)
\end{equation} 
for some $C(s) > 0$. 
Now fix 
\begin{equation} \label{parametri 800}
\parb := (3\a +1) \para, \quad 
\b := [\a \chi^2 + (1+\chi^2)(3\a+1)] \chi\inv.
\end{equation} 
Since $\chi=3/2$ and $\a = 17+5/2$, $\b$ is a universal constant, 
and the constants $C(s+\b)$ can be written as $C(s)$. 
Fix $\para \geq C(s)$ sufficiently large to satisfy \eqref{parametri 80}, \eqref{parametri 100}, \eqref{parametri 90} and \eqref{parametri 10}.
Then fix $\e_0 \leq C(s)$ sufficiently small to satisfy \eqref{parametri 00}.  
All the above conditions on $s$ hold if 
\[
22 \leq s \leq r-2-\b.
\]
Hence the minimal value for $r$ is $r_0 := 24 + \b$. Put $s_0 := 22$. 
For $s = s_0 = 22$ and $r = r_0$, all the above constants that depend on $s$ and $K_{g,r}$ become constants depending only on $K_{g,r_0}$. 
With this choice of parameters, the first estimate of $(P_{n+1})(iii)$ is proved.

The second estimate of $(P_{n+1})(iii)$ can be proved by the same arguments. Observe that in every estimate for $\pa_\e$ there is an additional factor $1/\e$: indeed, terms like $\e^p$ or $P_\e$, after being differentiated, have one degree less as powers of $\e$. 
Terms like $F(u_n,\e)$, $\tilde\Psi(u_n,\e)$, \ldots, after being differentiated with respect to $\e$, contain also terms like $\pa_u F(u_n,\e)[\pa_\e u_n]$, $\pa_u \tilde\Psi(u_n,\e)[\pa_\e u_n]$, \ldots,  and the loss of one degree as a power of $\e$ comes from \eqref{stima hk}. 
The estimates for $\pa_u$ and $\pa_\e$ of all the terms are given in the previous sections (and remind formula \eqref{formula F} for $F(u,\e)$).

\medskip

For each $\e$ for which the sequence $(u_n(\e))$ can be constructed, 
by \eqref{stima hk} $u_n = u_0 + \sum_{k=1}^n h_k$ 
is a Cauchy sequence in $H^{s_0}(\T^2)$, 
therefore $u_n(\e)$ converges in $H^{s_0}$ to some limit $u_\infty(\e) \in H^{s_0}$ as $n \to \infty$.  
Since the map $H^{s_0} \to H^{s_0-2}$, $u \mapsto F(u,\e)$ is continuous, 
$\| F(u_n,\e) - F(u_\infty, \e)\|_{s_0-2} \to 0$. 
On the other hand, we have proved that 
\[
\| F(u_n,\e) \|_{s'} \leq \| r_{n-1} \|_{s'} +  \| Q(u_{n-1}, h_n) \|_{s'} 
= C(s_0) N_{n+1}^{-\a} (A_r + A_Q) 
\leq C(s_0) N_{n+1}^{-\a} \exp(-\parb \chi^{n+1}) \to 0 
\]
as $n \to \infty$, where $s' = s_0 - 17 = 5$. Thus $F(u_\infty,\e) = 0$.

\medskip

Now let $22 = s_0 < s_1 < s_2$, with $s_1 = \lm s_0 + (1-\lm) s_2$, and $\lm \in (1/2 , 1)$. 
Apply \eqref{radio 3} with $s_2$ instead of $s+\b+2$: 
for $s_2 - \a \leq r - 12 - 3/2$ we get 
\[
\| h_{k+1} \|_{s_2} \leq N_{k+1}^\a C(s_2) (1 + \| u_k \|_{s_2}) \quad \forall k \geq 0,
\]
for some constant $C(s_2)$ depending on $s_2$. 
For \eqref{stima v2}, $\| u_0 \|_{s_2} \leq C(s_2)$ if $s_2 \leq r$. 
Then the ``very high norms'' $B_k' := \| h_k \|_{s_2}$ satisfy
$B_1' = \| h_{1} \|_{s_2} \leq N_{1}^{\a} C(s_2)$, and 
\begin{equation*} 
B_{k+1}' \leq N_{k+1}^{\a} C(s_2) \Big( 1 + \sum_{j=1}^k B_j' \Big), \quad k \geq 1.
\end{equation*}
Therefore there is a constant $K(s_2)$ such that 
\begin{equation} \label{Bn exp s2}
\| h_{k} \|_{s_2} = B_{k}'  \leq  K(s_2) \exp(\lmcioeparb \chi^{k}), 
\quad 
k \geq 1.
\end{equation}
Let us prove \eqref{Bn exp s2}. 
Since $\parb - 3 \a \para > 0$, where $\para, \parb$ have been fixed above, 
the inductive step $(k \Rightarrow k+1)$ holds for all $k \geq k_0(s_2)$, for some $k_0(s_2)$ depending on $s_2$ which is sufficiently large. Note that the constant $K(s_2)$ have no role in the inductive step. 
Then choose $K(s_2) := \max \{\| h_{k} \|_{s_2} \exp(-\lmcioeparb \chi^{k}) : 1 \leq k \leq k_0(s_2)\}$, so that \eqref{Bn exp s2} holds for all $k \geq 1$. 
Now, by \eqref{interpolation GN}, \eqref{Bn exp s2} and \eqref{stima hk}, 
\[
\| h_k \|_{s_1} \leq 2 \| h_k \|_{s_0}^\lm  \| h_k \|_{s_2}^{1-\lm} 
\leq 2 K(s_2)^{1-\lm} \exp(- \lm \parb \chi^k) \exp((1-\lm) \parb \chi^k) 
= C(s_2,\lm) \exp((1-2\lm) \parb \chi^k),
\]
and the series $\sum_{k \geq 1} \exp((1-2\lm) \parb \chi^k)$ converges because $(1-2\lm) < 0$. 
This implies that $\| u_\infty \|_{s_1} \leq \| u_0 \|_{s_1} + \sum_{k \geq 1} \| h_k \|_{s_1} 
< \infty$.
Since $s_1 < (s_0 + s_2)/2$ and $s_2 < r-12-3/2 + \a$, $\a = 17 + 5/2$, this argument holds if 
\[
s_1 < \frac{r + 28}{2}\,.
\]
If $g_i$, $i=0,1,2$ that defines the nonlinearity $\mN$ is of class $C^\infty$, 
then there is no upper bound for $s_1$, and the argument applies for every $s_1 \geq s_0$, whence $u_\infty \in C^\infty$.

\subsection{Proof of the measure estimate} \label{subsec:measure}

$\mG_0 = (0,\e_0)$, $\mB_{0} = \emptyset$. Let us estimate $\mG_{n+1}, \mB_{n+1}$, $n \geq 0$. 

The set $\mG_{n+1}$ is defined by \eqref{An+1}. 
$u_n(\e)$ is a $C^1$ function of $\e$, and $\mu_k(u,\e)$, $k=2,1,0,-2$ is a $C^1$ function of $(u,\e)$. Therefore each eigenvalue $\lm_{l,j}(u_n(\e),\e)$ is $C^1$ in $\e$. 
$\mB_{n+1}$ is the union  
\begin{equation} \label{union}
\mB_{n+1} = \bigcup_{(l,j) \in \mW_{n+1}} \Om_{l,j}^{n},
\qquad 
\Om_{l,j}^{n} := \Big\{ \e \in \mG_n : |\lm_{l,j}(u_n,\e)| \leq  \frac{1}{2 \la j \ra^3} \Big\}.
\end{equation}
Write the eigenvalues $\lm_{l,j}(u_n(\e),\e)$ as 
\[
\lm_{l,j}(u_n(\e),\e) = i \om \big( l + p_j^n(\e) \big),
\]
\[
p_j^n(\e) := \frac{\mu_2(u_n(\e),\e)}{1+3\e^2}\,j|j| 
+ \frac{\mu_1(u_n(\e),\e)}{1+3\e^2}\,j 
+ \frac{- \mu_0(u_n(\e),\e)}{1+3\e^2}\,\sgn(j) 
+ \frac{\mu_{-2}(u_n(\e),\e)}{1+3\e^2}\,\frac{\sgn(j)}{j^2}
\]
(where we mean $\sgn(j) j^{-2} = 0$ for $j=0$).
Since $\om = 1+3\e^2 > 1$, $|\lm_{l,j}(u_n(\e),\e)| \geq |l + p_j^n(\e)|$, and 
\begin{equation} \label{subset Om}
\Om_{l,j}^n \subseteq \tilde{\Om}_{l,j}^n 
:= \Big\{ \e \in \mG_n : |l + p_j^n(\e)| \leq  \frac{1}{2 \la j \ra^3} \Big\} \qquad \forall (l,j) \in \mW_{n+1}.
\end{equation}
For $j=0$, $p_j^n(\e) = p_0^n(\e) = 0$, therefore $\tilde{\Om}_{l,0}^n = \emptyset$ for all $l \neq 0$. 
The pair $(l,j)=(0,0)$ does not belong to $\mW_{n+1}$, 
hence the case $j=0$ gives no contribution to the union \eqref{union}.
So let $j \neq 0$.
\[
\frac{\mu_2(u_n(\e),\e)}{1+3\e^2} \, = 1 - 3\e^2 + O(\e^3), \quad 
\frac{\mu_1(u_n(\e),\e)}{1+3\e^2} \, = 3b \e^2 + O(\e^3), \quad 
\frac{\mu_k(u_n(\e),\e)}{1+3\e^2}\, = O(\e^3), \quad k=0,-2,
\]
where $b := \Pi_C(\bar v_1^2)$, and the precise meaning of $O(\e^3)$ is given by  
\eqref{stima mu2}, \eqref{stima mu1}, \eqref{stima mu0}, \eqref{stima mu-2}.
Therefore 
\[
p_j^n(\e) = j|j| (1 + \e^2 r_j^n(\e)), \quad 
r_j^n(\e) := \frac{1}{\e^2} \Big( \frac{p_j^n(\e)}{j|j|}\,-1 \Big)
= -3 + \frac{3b}{|j|} + O(\e).
\]
$|r_j^n(\e)| \leq C$ for some $C>0$ independent of $j,n,\e$. 
Also, by Proposition \ref{prop:bif}, 
\[
| b-|j| | \geq \d |j|, \quad 
\Big| -3 + \frac{3b}{|j|} \Big| \geq 3\d  
\quad \forall j \in \N, \ j \neq 0.
\]
As a consequence, 
\[
2\d \, \leq |r_j^n(\e)| \leq C
\]
for $\e < \e_0$ sufficiently small to have 
$|r_j^n(\e) + 3 - 3b/|j|| \leq \d$.
Suppose that $\e \in \tilde{\Om}_{l,j}^n \neq \emptyset$. 
Then, by the triangular inequality, 
\begin{equation} \label{tria}
|l+j|j|| 
\leq |l + p_j^n(\e)| + |- p_j^n(\e) + j|j|\,|
\leq \frac{1}{2\la j \ra^3}\, + \e^2 |j|^2 |r_j^n(\e)| 
\leq \frac{1}{2}\, +  C \e^2|j|^2. 
\end{equation}
$|l+j|j|| \geq 1$ because $l+j|j|$ is a nonzero integer. 
Thus we have a ``cut-off'': if $\tilde{\Om}_{l,j}^n \neq \emptyset$, then $1 \leq 1/2 + C \e^2|j|^2$, and
\begin{equation} \label{cut-off}
C \leq \e |j| \leq \e_0 |j|,
\end{equation}
for some $C>0$. 
Moreover, by \eqref{tria}, $l$ belongs to the interval 
\begin{equation} \label{interval lj}
-j|j| - 1/2 - C \e_0^2 |j|^2 \leq l \leq -j|j| + 1/2 + C \e_0^2 |j|^2.
\end{equation}
As a consequence, for any fixed $j$ with $|j| \geq C/\e_0$, 
the number of integers $l$ such that $\tilde{\Om}_{l,j}^n \neq \emptyset$ does not exceed the number of integers $l$ in the interval \eqref{interval lj}, namely
\begin{equation} \label{number of l}
\sharp\{ l : \tilde{\Om}_{l,j}^n \neq \emptyset \} 
\, \leq \, 2(1/2 + C \e_0^2 |j|^2) + 1 
\, \leq \, C' \e_0^2 |j|^2
\end{equation}
because $2 \leq C \e_0^2 |j|^2$ by \eqref{cut-off} (and the number of integers in an interval $[a,b]$ is at most $(b-a+1)$).
By \eqref{subset Om}, \eqref{number of l} implies that $\mB_{n+1}$ is the union of a finite number of closed sets, hence $\mG_{n+1}$ is open.

From the chain rule, 
\eqref{stima mu2}, \eqref{stima mu1}, \eqref{stima mu0}, \eqref{stima mu-2},
and $\| \pa_\e u_n(\e) \|_{12} \leq \e\inv C$ (which follows from \eqref{stima hk}), 
\[
\pa_\e p_j^n(\e) 
= j|j| \e \Big( -6 + \frac{6b}{|j|} + O(\e) \Big).
\]
Hence, for any fixed $j$, the sign of $\pa_\e p_j^n(\e)$ is the sign of $j ( -1 + b/|j|)$, which is constant with respect to $\e$. 
By \eqref{cut-off}, 
\[
|\pa_\e p_j^n(\e)| = |j|^2 \e \Big| -6 + \frac{6b}{|j|} + O(\e) \Big| 
\geq |j|^2 \e \d \geq C|j|
\]
if $\e_0$ is sufficiently small.
So $p_j^n$ is strictly monotone as a function of $\e$, and, as a consequence, $\tilde{\Om}_{l,j}^n$ is an interval, say $[\e_1,\e_2]$. 
If $p_j^n$ is increasing, then  
\[
\frac{1}{|j|^3} 
\geq p_j^n(\e_2) - p_j^n(\e_1)
= \int_{\e_1}^{\e_2} \pa_\e p_j^n(\e)\,d\e 
\geq C|j| (\e_2-\e_1) 
= C |j| |\tilde{\Om}_{l,j}^n|,
\]
and analogous calculation if $p_j^n$ is decreasing. 
Thus 
\begin{equation} \label{measure Om ljn}
|\tilde{\Om}_{l,j}^n| 
\leq  \frac{C}{|j|^4}\,.
\end{equation}
Also, $|\Om_{l,j}^n| \leq |\tilde{\Om}_{l,j}^n|$ because 
$\Om_{l,j}^n \subseteq \tilde{\Om}_{l,j}^n$.

Now split the union \eqref{union} into two parts, the union over the ``old'' indices $(l,j) \in \mW_{n+1} \cap \mW_n = \mW_n$ and the one over the ``new'' indices $(l,j) \in \mW_{n+1} \setminus \mW_n$. 
By \eqref{number of l} and \eqref{measure Om ljn}, the Lebesgue measure of the union over the new indices is
\[
\Big| \bigcup_{\text{new}} 
 \Om_{l,j}^n \Big| 
\leq \sum_{\text{new}} 
 |\Om_{l,j}^n| 
\leq \sum_{N_n < |j| \leq N_{n+1} } \frac{C}{|j|^4}\,\e_0^2 |j|^2
= C \e_0^2 \sum_{N_n < |j| \leq N_{n+1} } \frac{1}{|j|^2} 
= C \e_0^2 \, c_{n+1}, 
\]
where 
\[
c_0 := \sum_{1 \leq |j| \leq N_0} \frac{1}{|j|^2}\,, \quad 
c_{n+1} := \sum_{N_n < |j| \leq N_{n+1} } \frac{1}{|j|^2}\,, \quad  
\text{and} \quad 
\sum_{n=0}^\infty c_n 
= \sum_{|j|=1}^\infty \frac{1}{|j|^2} 
= C < \infty.
\]
For old indices, let $\e \in \tilde{\Om}_{l,j}^n$, with $(l,j) \in \mW_n$. 
By the triangular inequality, $u_n = u_{n-1} + h_n$, and estimates \eqref{stima mu2}, \eqref{stima mu1}, \eqref{stima mu0}, \eqref{stima mu-2} for $\pa_u \mu_k(u,\e)$,  
\[
|l + p_j^{n-1}(\e)| 
\leq |l + p_j^{n}(\e)| + |p_j^{n}(\e) - p_j^{n-1}(\e)|
\leq \frac{1}{2|j|^3}\, + C \e^4 |j|^2 \|h_n(\e)\|_{12}.
\]
Since $\tilde{\Om}_{l,j}^n \subseteq \mG_n$, and $(l,j) \in \mW_n$, 
\[
\tilde{\Om}_{l,j}^n \subseteq \Big\{ \e \in \mG_n : 
\frac{1}{2|j|^3}\, < |l + p_j^{n-1}(\e)| 
\leq \frac{1}{2|j|^3}\, + C \e^4 |j|^2 \|h_n(\e)\|_{12} \Big\}.
\]
As above, $p_j^{n-1}$ is strictly monotone as a function of $\e$,  
$|\pa_\e p_j^{n-1}(\e)| \geq C|j|$, and 
$\| h_n(\e) \|_{12} \leq \exp(-\parb \chi^n)$ by \eqref{stima hk}.
Hence
\[
|\tilde{\Om}_{l,j}^n| 
\leq  C \e_0^4 |j|^2 \exp(-\parb \chi^n) \, \frac{1}{|j|}\,
\leq  C \e_0^4 N_n \exp(-\parb \chi^n)
\]
because $|j| \leq N_n$. 
By \eqref{number of l} and \eqref{Nn}, the Lebesgue measure of the union over the old indices is then
\[
\Big| \bigcup_{\text{old}} \Om_{l,j}^n \Big| 
\leq \sum_{\text{old}} |\Om_{l,j}^n| 
\leq C \e_0^4 \sum_{|j| \leq N_n} N_n^3 \exp(-\parb \chi^n) 
\leq C \e_0^4 N_n^4 \exp(-\parb \chi^n) 
=  C \e_0^4 \exp[\chi^n(-\parb + 4\para)].
\]
Since $\parb -4 \para > \para \geq 1$ by \eqref{parametri 800}, $\sum_{n=0}^\infty \exp[\chi^n(-\parb + 4\para)] = C < \infty$. 
We have proved that 
\[
|\mB_{n+1}| \leq C \e_0^2 b_{n+1}, 
\quad \sum_{n=0}^\infty b_n = C < \infty.
\]
Therefore $| \cup_{n \geq 1} \mB_n | \leq \e_0^2 C$, whence 
$|\mG_{\infty}| \geq \e_0(1 - \e_0 C)$.

\section{\large{Appendix A. Kernel properties}}
\label{Appendix A. Kernel properties}

\begin{proof}[Proof of Lemma \ref{lemma:prodotti in V}] 
1) Let $j_1,j_2$ be nonzero. 
$q_{j_1} q_{j_2} = q_{j_3} \in V$ for some $j_3 \in \Z$ 
if and only if
\[
j_1 + j_2 = j_3, \quad 
-j_1|j_1| -j_2|j_2| = -j_3|j_3| .
\]
Let $n_k := |j_k|$ and $j_k = \s_k n_k$, $\s_k \in \{1,-1\}$, $k=1,2$.
If $\s_1 = \s_2$, then 
\[
j_3 = j_1 + j_2 = \s_1 (n_1 + n_2), \quad 
j_3 |j_3| = j_1|j_1| + j_2|j_2| = \s_1 (n_1^2 + n_2^2),
\]
therefore $|j_3|^2 = (n_1 + n_2)^2 = (n_1^2 + n_2^2)$, 
and this is impossible because $n_1 n_2 > 0$. 
If $\s_1 = - \s_2$, then 
\[
j_3 = j_1 + j_2 = \s_1 (n_1 - n_2), \quad 
j_3 |j_3| = j_1|j_1| + j_2|j_2| = \s_1 (n_1^2 - n_2^2),
\]
whence 
$|n_2 - n_1| \big( n_1 + n_2 - |n_2 - n_1| \big) = 0$.
This holds only for $n_2 = n_1$. 

2) Let $j_1, j_2, j_3$ all nonzero. 
$q_{j_1} q_{j_2} q_{j_3} = q_{j_4} \in V$ for some $j_4 \in \Z$ 
if and only if 
\[
j_1 + j_2 + j_3 = j_4 , \quad 
-j_1|j_1| -j_2|j_2| - j_3|j_3| = -j_4|j_4| .
\]
Let $n_k := |j_k|$, $j_k = \s_k n_k$, $k=1,2,3,4$, with $\s_1,\s_2,\s_3 \in \{1,-1\}$ and $\s_4 \in \{1,0,-1\}$. 
If $\s_1 = \s_2 = \s_3$, then 
\[
- n_1^2 - n_2^2 - n_3^2  
+ (n_1 + n_2 + n_3)^2 = 0,
\]
which is impossible because $n_1,n_2,n_3>0$. 
If $\s_1, \s_2, \s_3$ are not all equal, say $\s_1 = \s_2 = - \s_3$, then 
\[ 
\s_4 n_4 = j_4 = j_1 + j_2 + j_3 = \s_1 (n_1 + n_2 - n_3),
\]
\[
\s_4 n_4^2 = j_4|j_4| = j_1|j_1| + j_2|j_2| + j_3|j_3| 
= \s_1 (n_1^2 + n_2^2 - n_3^2).
\]
If $j_4 = 0$, then 
\[
n_1 + n_2 = n_3, \quad 
n_1^2 + n_2^2 = n_3^2,
\]
which is impossible because $n_1 n_2 >0$. 
Thus $j_4 \neq 0$, $\s_4 \neq 0$. As a consequence, 
\[
n_1 + n_2 - n_3 = \s n_4 , \quad 
n_1^2 + n_2^2 - n_3^2 = \s n_4^2,
\quad \s := \s_1 \s_4 \in \{1,-1\}.
\]
If $\s = -1$, then 
\[
n_1 + n_2 + n_4 = n_3 , \quad 
n_1^2 + n_2^2 + n_4^2 = n_3^2,
\]
which is impossible, as already observed. Thus $\s = 1$ and 
\[
n_1 - n_3 = n_4 - n_2, \quad 
(n_1 - n_3)(n_1 + n_3) = (n_4 - n_2)(n_4 + n_2) .
\]
If $n_1 \neq n_3$, then the second equality implies $n_1 + n_3 = n_4 + n_2$. Therefore the sum of the two equalities gives
\[
n_1 = n_4, \quad n_3 = n_2,
\]
hence $j_2 + j_3 = 0$ because $\s_2 = - \s_3$. 
If, instead, $n_1 = n_3$, then also $n_2 = n_4$, and $j_1 + j_3 = 0$ because $\s_1 = - \s_3$.
\end{proof}

\section{\large{Appendix B. Tame estimates}}
\label{sec:Appendix B. Tame estimates}

In this Appendix we remind classical tame estimates for changes of variables, composition of functions and the Hilbert transform, in Sobolev class on the torus, which are used in the paper. 
For these classical estimates see also, for example: \cite{Ioo-Plo-Tol}, Appendix G; 
\cite{Hormander-geodesy}, Appendix; 
\cite{Berti-Bolle-Ck-Nodea}, section 2; 
\cite{Inci-Kappeler-Topalov}. 
Before that, remind standard Sobolev norms properties 
(Lemma \ref{lemma:standard Sobolev norms properties}) 
and tame estimates for operators 
(Lemma \ref{lemma:tame estimates for operators}).

\begin{lemma} \label{lemma:standard Sobolev norms properties}
 Let $d \in \N$, $d \geq 1$, and $s_0 > d/2$. 
There exists an increasing function $C(s)>0$, $s \geq s_0$, with the following properties. \\
$(i)$ Embedding. $\| u \|_{L^\infty} \leq C(s_0) \| u \|_{s_0}$ for all $u \in H^{s_0}(\T^d,\C)$.
\\
$(ii)$ Algebra. $\| uv \|_{s_0} \leq C(s_0) \| u \|_{s_0} \| v \|_{s_0}$ for all 
$u, v \in H^{s_0}(\T^d,\C)$.
\\
$(iii)$ Interpolation. For $0 \leq s_1 \leq s \leq s_2$, $s = \lm s_1 + (1-\lm) s_2$, 
\begin{equation} \label{interpolation GN}
\| u \|_{s} \leq 2 \| u \|_{s_1}^\lm \| u \|_{s_2}^{1-\lm} 
\quad \forall u \in H^{s_2}(\T^d,\C).
\end{equation}
For $0 \leq s_1 \leq \s_1 \leq \s_2 \leq s_2$, 
\begin{equation} \label{interpolation estremi}
\| u \|_{\s_1} \| u \|_{\s_2} \leq 4 \| u \|_{s_1} \| u \|_{s_2}
\quad \forall u \in H^{s_2}(\T^d,\C).
\end{equation}
\eqref{interpolation GN},\eqref{interpolation estremi} also hold with all $\| u \|_s$ replaced by $|u|_s$, $u\in W^{s,\infty}(\T^d)$, $s \in \N$.

\medskip

\noindent
$(iv)$ Asymmetric tame product. For $s \geq s_0$, 
\begin{equation} \label{asymmetric tame product}
\| uv \|_s \leq C(s) \|u\|_s \|v\|_{s_0} + C(s_0) \|u\|_{s_0} \| v \|_s
\quad \forall u,v \in H^s(\T^d).
\end{equation}
$(v)$ Mixed norms tame product. For $s \geq 0$, $s \in \N$,
\begin{equation} \label{mixed norms tame product}
\| uv \|_s \leq C(s) (\|u\|_s |v|_{0} + \|u\|_0 | v |_s)
\quad \forall u \in H^s(\T^d), \ v \in W^{s,\infty}(\T^d).
\end{equation}
\end{lemma}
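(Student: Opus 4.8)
The plan is to obtain all five items by elementary Fourier analysis on $\T^d$, reproducing the classical arguments cited above; I indicate the mechanism for each. For $(i)$, writing $u=\sum_{k}u_k e^{ik\cdot x}$ one bounds pointwise $|u(x)|\le\sum_k|u_k|=\sum_k\big(|u_k|\la k\ra^{s_0}\big)\la k\ra^{-s_0}$ and applies Cauchy--Schwarz, getting $\|u\|_{L^\infty}\le\|u\|_{s_0}\big(\sum_k\la k\ra^{-2s_0}\big)^{1/2}$; the last series converges precisely because $2s_0>d$, which furnishes $C(s_0)$.

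For $(iii)$, the interpolation inequality \eqref{interpolation GN} follows by writing $|u_k|^2\la k\ra^{2s}=\big(|u_k|^2\la k\ra^{2s_1}\big)^{\lm}\big(|u_k|^2\la k\ra^{2s_2}\big)^{1-\lm}$ and applying H\"older with exponents $1/\lm$ and $1/(1-\lm)$ to the sum over $k$, the factor $2$ covering the passage from the squared quantity to the norm. For \eqref{interpolation estremi} I would write $\s_1=ts_1+(1-t)s_2$ and $\s_2=(1-t)s_1+ts_2$ with $t\in[1/2,1]$ (the case $\s_1+\s_2=s_1+s_2$, which is the one actually used) and multiply the two resulting instances of \eqref{interpolation GN}. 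The same computations, performed directly on the pointwise bounds for $\pa^\a u$, give the $W^{s,\infty}$ versions.

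Items $(ii)$, $(iv)$, $(v)$ are the tame product estimates, and all rest on the elementary inequality $\la k\ra^{s}\le C(s)\big(\la k-k'\ra^{s}+\la k'\ra^{s}\big)$, valid for $s\ge0$. Since the Fourier coefficient of $uv$ at $k$ is $\sum_{k'}u_{k-k'}v_{k'}$, multiplying by $\la k\ra^{s}$ and using this splitting bounds $\|uv\|_s$ by two convolution sums, one carrying the weight on $u$ and one on $v$. In the first, $\sum_{k'}\big(\la k-k'\ra^{s}|u_{k-k'}|\big)|v_{k'}|$ is controlled, via Young's inequality for convolutions combined with $\|(|v_{k'}|)\|_{\ell^1}\le C(s_0)\|v\|_{s_0}$ (again using $2s_0>d$), by $\|u\|_s\|v\|_{s_0}$; symmetrically the second term gives $\|u\|_{s_0}\|v\|_s$, which is \eqref{asymmetric tame product}. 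Taking $s=s_0$ and invoking $(i)$ yields the algebra property $(ii)$. For \eqref{mixed norms tame product} the same splitting is used, but the low-order factor is estimated in $L^\infty$: one uses that multiplication by a $W^{s,\infty}$ function maps $L^2$ to $L^2$ with norm $|v|_0$ and $L^2$ to $H^s$ with cost $|v|_s$ (Leibniz rule plus H\"older), and symmetrically bounds the $u$-factor in $L^2$, i.e.\ by $\|u\|_0$ and $\|u\|_s$.

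The statement is entirely standard and there is no genuine obstacle; the only point needing a word of care is the monotone dependence of $C(s)$ on $s$, which is automatic because enlarging an index only shrinks the convergent series $\sum_k\la k\ra^{-2s_0}$ and all the convolution bounds above are monotone in the Sobolev exponent.
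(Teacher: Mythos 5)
Your Fourier-side derivations of $(i)$, the first interpolation inequality, and the asymmetric tame product $(iv)$ are correct and standard, and $(ii)$ does follow by specializing $(iv)$. The paper simply cites Moser for $(iii)$ and Berti--Bolle--Procesi for $(iv)$, so for those items you are giving a self-contained argument where the paper gives none, which is fine.

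Two points need repair. First, a minor one on \eqref{interpolation estremi}: your parameterization $\s_1=ts_1+(1-t)s_2$, $\s_2=(1-t)s_1+ts_2$ with a single $t$ only captures the case $\s_1+\s_2=s_1+s_2$. In general you must apply \eqref{interpolation GN} twice with two different parameters $\lm_1,\lm_2$; this gives $\|u\|_{\s_1}\|u\|_{\s_2}\le 4\|u\|_{s_1}^{\lm_1+\lm_2}\|u\|_{s_2}^{2-\lm_1-\lm_2}$, and you then need the extra, elementary observation that $\|u\|_{s_1}\le\|u\|_{s_2}$ to absorb the exponent mismatch when $\lm_1+\lm_2\neq 1$. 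That step is missing. (Also, for \eqref{interpolation GN} in $H^s$ your H\"older computation actually yields constant $1$, not $2$; the ``factor $2$ covering the passage to the norm'' remark is a small slip, harmless but inaccurate.)

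The substantive gap is in $(v)$. You assert that multiplication by $v\in W^{s,\infty}$ ``maps $L^2$ to $H^s$ with cost $|v|_s$.'' This is false: take $v\equiv 1$, which lies in $W^{s,\infty}$ with bounded $|v|_s$, yet multiplication by $v$ is the identity and certainly does not map $L^2$ into $H^s$. The Leibniz-rule expansion you invoke, $D^\a(uv)=\sum_{\b+\g=\a}C_{\b\g}(D^\b u)(D^\g v)$, necessarily puts derivatives on $u$, so no estimate of the form $\|M_v\|_{L^2\to H^s}\lesssim|v|_s$ can come out of it. Moreover the Fourier route you use for $(iv)$ also breaks down here: the second convolution term requires controlling $\|\Lambda^s v\|_{L^\infty}$ by $|v|_s=\|v\|_{W^{s,\infty}}$, and the Bessel potential $\Lambda^s$ is not bounded on $L^\infty$. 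What actually works, and what the paper does (for integer $s$, as stated), is to apply Leibniz, then the elementary bound $\|(D^\b u)(D^\g v)\|_0\le\|D^\b u\|_0\,|D^\g v|_0\le\|u\|_{|\b|}\,|v|_{|\g|}$, and then the interpolation inequalities of $(iii)$ for both $\|\cdot\|_s$ and $|\cdot|_s$ together with Young's inequality $(1-\theta)a+\theta b\ge a^{1-\theta}b^\theta$ to collapse $\sum_{|\b|+|\g|=|\a|}\|u\|_{|\b|}|v|_{|\g|}$ into $\|u\|_s|v|_0+\|u\|_0|v|_s$. You should replace the erroneous operator-norm claim with this argument.
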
 

\begin{proof} $(iii)$: see \cite{Moser-Pisa-66}, page 269. 
$(iv)$: see the Appendix of \cite{Berti-Bolle-Procesi-AIHP-2010}. 
$(v)$: write $D^\a(uv) = \sum_{\b+\g=\a} (D^\b u)(D^\g v)$, use the elementary inequality $\|(D^\b u)(D^\g v)\|_0 \leq \|D^\b u\|_0 |D^\g v|_0$, then the interpolation $(iii)$.
\end{proof}

\begin{lemma}
\label{lemma:tame estimates for operators}
Let $0 \leq s_0 \leq s$, and $c_0, c_s > 0$.
Let $S$ be a closed linear subspace of $Z$ 
(for example, $S=Z_0$ or $S=Z_{0N} \cap Y$).
Let $T : S \cap H^{s_0} \to S \cap H^{s_0}$ be a linear operator. 

$(i)$ Tame Neumann series. Let $c_0 \leq 1/2$. Assume that 
\begin{equation} \label{Neumann tame}
\| (T-I) f \|_{s} \leq c_0 \| f \|_{s} + c_s \| f \|_{s_0}, \quad 
\| (T-I) f \|_{s_0} \leq c_0 \| f \|_{s_0}
\end{equation}
for all $f \in S \cap H^{s_0}$. 
Then $T : S \cap H^{s_0} \to S \cap H^{s_0}$ 
is invertible, with 
\begin{equation} \label{Neumann tame inv}
\| (T\inv -I) f \|_{s} 
\leq 2 c_0 \| f \|_{s} + 4 c_s \| f \|_{s_0}, \quad 
\| (T\inv -I) f \|_{s_0} \leq 2 c_0 \| f \|_{s_0}.
\end{equation}

$(ii)$ Tame derivative of the inverse with respect to a parameter. 
Let 
\begin{equation} \label{op tame inv}
\| T\inv f \|_{s} \leq c_0 \| f \|_{s} + c_s \| f \|_{s_0}, \quad 
\| T\inv f \|_{s_0} \leq c_0 \| f \|_{s_0}
\end{equation}
for all $f \in S \cap H^{s_0}$.
Assume that $T$ depends in a $C^1$ way on a parameter $\lm$ in a Banach space, and the derivative 
$(\pa_\lm T)[\hat \lm] f$ 
of $Tf$ with respect to $\lm$ in the direction $\hat \lm$ satisfies 
\begin{equation} \label{op tame der}
\| (\pa_\lm T)[\hat \lm] f \|_{s} \leq b_0 \| f \|_{s} + b_s \| f \|_{s_0}, \quad 
\| (\pa_\lm T)[\hat \lm] f \|_{s_0} \leq b_0 \| f \|_{s_0}
\end{equation}
for all $f \in S \cap H^{s_0}$,  for some constants $b_0, b_s >0$. 
Then $T\inv$ is also a $C^1$ function of $\lm$, 
\begin{gather} \label{formula der lm inv}
\pa_\lm T\inv[\hat \lm] = - T\inv (\pa_\lm T[\hat \lm])\, T\inv,
\\
\label{op tame inv der}
\| \pa_\lm T\inv[\hat \lm] f \|_{s} 
\leq (4c_0^2 b_0) \| f \|_{s} 
+ (16 c_0 b_0 c_s + 4 c_0^2 b_s) \| f \|_{s_0}, \quad 
\| \pa_\lm T\inv[\hat \lm] f \|_{s_0} 
\leq c_0^2 b_0 \| f \|_{s_0}.
\end{gather}
\end{lemma}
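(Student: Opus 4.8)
The two parts are standard tame refinements of the Neumann series and of the formula for the derivative of an inverse, so the plan is to carry out the classical arguments while tracking the two-scale structure of the estimates ($s_0$ is the "low" norm, $s$ the "high" norm). For part $(i)$, I would write $T\inv = \sum_{k\geq 0} (I-T)^k$, with convergence in $\mathcal{L}(S\cap H^{s_0})$ guaranteed by the hypothesis $\|(T-I)f\|_{s_0}\leq c_0\|f\|_{s_0}$ with $c_0\leq 1/2$; this already gives the low-norm bound $\|(T\inv-I)f\|_{s_0}\leq \sum_{k\geq 1} c_0^k\|f\|_{s_0}\leq 2c_0\|f\|_{s_0}$. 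For the high norm, I would prove by induction on $k$ that $\|(I-T)^k f\|_s\leq c_0^k\|f\|_s + k c_0^{k-1} c_s\|f\|_{s_0}$: the inductive step applies the high-norm hypothesis in \eqref{Neumann tame} to $g=(I-T)^{k-1}f$, giving $\|(I-T)^k f\|_s\leq c_0\|(I-T)^{k-1}f\|_s + c_s\|(I-T)^{k-1}f\|_{s_0}$, and then inserts the inductive bound on $\|(I-T)^{k-1}f\|_s$ together with $\|(I-T)^{k-1}f\|_{s_0}\leq c_0^{k-1}\|f\|_{s_0}$. Summing over $k\geq 1$ and using $\sum_{k\geq 1} c_0^k\leq 2c_0$ and $\sum_{k\geq 1} k c_0^{k-1}\leq 4$ (valid since $c_0\leq 1/2$, because $\sum_{k\geq 1}k x^{k-1}=(1-x)^{-2}\leq 4$ for $x\leq 1/2$) yields \eqref{Neumann tame inv}.

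For part $(ii)$, the identity $T(\lm)\, T(\lm)\inv = I$ differentiated in $\lm$ gives $(\pa_\lm T)[\hat\lm]\,T\inv + T\,\pa_\lm(T\inv)[\hat\lm] = 0$, hence \eqref{formula der lm inv}; the $C^1$ dependence of $T\inv$ on $\lm$ follows from that of $T$ together with the continuity of inversion in the relevant operator topology (the Neumann series argument shows $T\inv$ is locally bounded, so the usual difference-quotient estimate applies). To get \eqref{op tame inv der} I would compose the three bounds in the order $T\inv$, then $(\pa_\lm T)[\hat\lm]$, then $T\inv$, using \eqref{op tame inv} and \eqref{op tame der}. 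Concretely: first $T\inv f$ satisfies $\|T\inv f\|_{s_0}\leq c_0\|f\|_{s_0}$ and $\|T\inv f\|_s\leq c_0\|f\|_s + c_s\|f\|_{s_0}$; apply $(\pa_\lm T)[\hat\lm]$ to it, obtaining low norm $\leq b_0 c_0\|f\|_{s_0}$ and high norm $\leq b_0(c_0\|f\|_s+c_s\|f\|_{s_0}) + b_s c_0\|f\|_{s_0}$; finally apply $T\inv$ once more, which multiplies the low norm by $c_0$ and the high norm by $c_0$ plus a $c_s\cdot(\text{low norm})$ correction. Keeping track of the three nested applications of \eqref{op tame inv}, the high-norm output is bounded by $c_0\big[b_0 c_0\|f\|_s + (b_0 c_s + b_s c_0)\|f\|_{s_0}\big] + c_s\big[b_0 c_0\|f\|_{s_0}\big]$, and after using $c_0\leq 1$ to absorb constants one reaches the stated coefficients $4c_0^2 b_0$ and $16 c_0 b_0 c_s + 4 c_0^2 b_s$; the low norm is simply $c_0\cdot b_0 c_0\|f\|_{s_0} = c_0^2 b_0\|f\|_{s_0}$.

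The only mild subtlety — and the step I would be most careful about — is the bookkeeping in part $(ii)$: one must decide in which order to compose the three factors and make sure that each intermediate quantity is fed into the next estimate with both its low- and high-norm bounds, since a high-norm bound of the form $a\|f\|_s + b\|f\|_{s_0}$ passed through \eqref{op tame inv} produces $c_0 a\|f\|_s + (c_0 b + c_s a')\|f\|_{s_0}$ where $a'$ is the high-norm coefficient, so the $c_s$ cross-terms accumulate. Everything else is routine; no small-divisor or PDE input is needed here, only the abstract two-scale inequalities. The invariance of the subspace $S$ under $T$ (and hence under $T\inv$ and $\pa_\lm T\inv$) is immediate since $S$ is closed and $T(S\cap H^{s_0})\subseteq S\cap H^{s_0}$ by hypothesis, so all the operators constructed above map $S\cap H^{s_0}$ to itself and the estimates hold on $S$.
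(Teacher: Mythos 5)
Your proof follows the paper's approach exactly: part $(i)$ is the Neumann series $T\inv=\sum_{n\geq 0}(I-T)^n$ with the inductive bound $\|(I-T)^n f\|_s\leq c_0^n\|f\|_s + n c_0^{n-1}c_s\|f\|_{s_0}$ and the sums $\sum c_0^n\leq 2c_0$, $\sum n c_0^{n-1}\leq 4$; part $(ii)$ differentiates $TT\inv=I$ to get \eqref{formula der lm inv} and composes the three two-scale estimates. Your bookkeeping in part $(ii)$ is in fact slightly sharper than the stated constants ($c_0^2b_0$ and $2c_0 b_0 c_s + c_0^2 b_s$ versus the lemma's $4c_0^2 b_0$ and $16 c_0 b_0 c_s + 4 c_0^2 b_s$), which is fine since the stated estimate is weaker.
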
 

\begin{proof} 
$(i)$. Let $A := I-T$. By induction, 
\[
\| A^n f \|_s \leq c_0^n \| f \|_s + c_s n c_0^{n-1} \| f \|_{s_0}, \quad
\| A^n f \|_{s_0} \leq c_0^n \| f \|_{s_0}, \quad 
n \geq 1,
\]
where $A^2 f$ means $A(Af)$ and so on.
Since $c_0 \leq 1/2$, 
\[ 
\sum_{n=1}^\infty \| A^n f \|_s 
\leq c_0 \Big( \sum_{n=0}^\infty c_0^n \Big) \| f \|_s 
+ c_s \Big( \sum_{n=1}^\infty n c_0^{n-1} \Big) \| f \|_{s_0} 
\leq 2 c_0 \| f \|_s + 4 c_s \| f \|_{s_0}.
\]
Hence, by Neumann series, $T$ is invertible, and 
$T\inv - I = \sum_{n=1}^\infty A^n$ satisfies \eqref{Neumann tame inv}.

$(ii)$ Formula \eqref{formula der lm inv} follows from differentiating the equality $T T\inv f = f$ with respect to the parameter $\lm$.
\eqref{op tame inv},\eqref{op tame der},\eqref{formula der lm inv} give \eqref{op tame inv der}.
\end{proof}

\begin{lemma}[Composition of functions]
\label{lemma:composition of functions, Moser}
$(i)$ Let $f(x,y)$ be defined for $y=(y_1,\ldots,y_m)$ in the ball 
$B_1 = \{ y \in \R^m : |y|^2 = \sum_{i=1}^m |y_i|^2 < 1\}$  
and all $x=(x_1,\ldots,x_d) \in\R^d$, and let $f$ be $2\p$ periodic in $x_1,\ldots,x_d$. 
Assume that $f$ has continuous derivatives up to order $r \geq 0$ which are bounded by $\| f \|_{C^r} < \infty$. 
Let $u \in H^r(\T^d,\R^m)$, with $u(x) \in B_1$ for all $x$. 
Let $\tilde f(u)(x) = f(x,u(x))$. 
Then 
\[
\| \tilde f(u) \|_r \leq C \|f\|_{C^r} (\|u\|_r + 1).
\]
The constant $C$ depends on $r,d,m$.

\medskip

$(ii)$ Let $f,\tilde f$ be like in ($i$), and assume that $\| \pa_y^\a f \|_{C^r} \leq K_r$ for all $|\a| \leq N+1$. 
Let $\tilde f^{(n)}(u)[h]^n$ denote the $n$-th Fr\'echet derivative of $\tilde f$ at $u$ in the direction $[h]^n = [h,\ldots,h]$. 
($\tilde f^{(n)}(u)(x)$ is simply the $n$-th Fr\'echet derivative of $f(x,y)$ with respect to the variable $y$, evaluated at the point $(x,y) = (x,u(x))$\,).
If $u,h \in H^r(\T^d,\R^m)$, with $u(x),u(x)+h(x) \in B_1$ for all $x$, then
\[
\Big\| \tilde f(u+h) - \sum_{n=0}^{N} \frac{1}{n!}\, 
\tilde f^{(n)}(u)[h]^n \Big\|_r 
\leq C K_r \, \| h \|_{L^\infty}^{N} ( \| h \|_r + \| h \|_{L^\infty} \| u \|_r).
\] 
$C$ depends on $r,d,m,N$.

\medskip

$(iii)$ Let $u \in H^{r+p}(\T^d,\R)$. Let $D^k u(x)$ be the list of all partial derivatives $\pa_x^\a u(x)$ of order $|\a|=k$. 
Let $\tilde f(u)(x) = f(x,u(x),Du(x),\ldots,D^p u(x))$, where $f$ is like in $(i)$ for a suitable $m$. 
Then
\[
\| \tilde f(u) \|_r
\leq C \| f \|_{C^r} (\|u\|_{r+p} + 1)
\]
provided $(u(x),Du(x),\ldots,D^p u(x)) \in B_1$ for all $x$.
$C$ depends on $r,d,p$. 

If, in addition, $\| \pa_y^\a f \|_{C^r} \leq K_r$ for all $|\a| \leq N+1$, then 
\begin{equation} \label{Taylor composition}
\Big\| \tilde f(u+h) - \sum_{n=0}^{N} \frac{1}{n!}\, 
\tilde f^{(n)}(u)[h]^n \Big\|_r 
\leq C K_r \, \| h \|_{W^{p,\infty}}^{N} 
( \| h \|_{r+p} + \| h \|_{W^{p,\infty}} \| u \|_{r+p}).
\end{equation}
$C$ depends on $r,d,p,N$.

\medskip

$(iv)$ The previous statements also hold when all the $L^2$-based Sobolev norms $\|u\|_r$ are replaced by the $L^\infty$-based Sobolev norms $|u|_r = \|u\|_{W^{r,\infty}} = \sum_{k \leq r} \|D^k u\|_{L^\infty}$. 
\end{lemma}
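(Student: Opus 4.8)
The plan is to reduce everything to the classical Moser composition estimate on the torus and then to propagate it through differentiation in the direction $h$ by a Taylor-with-remainder argument. First I would establish $(i)$: the function $x \mapsto f(x,u(x))$ is handled by the standard Moser tame estimate for Nemytskii operators, which one proves by writing $\pa_x^\a(f(x,u(x)))$ via the Fa\`a di Bruno formula as a sum of products of the form $(\pa_y^\b\pa_x^\g f)(x,u(x)) \prod_i \pa_x^{\a_i} u$ with $\sum_i |\a_i| \le |\a|$, estimating the $L^\infty$ factor of lowest order by the embedding $H^{s_0}\hookrightarrow L^\infty$ and the remaining factors by the interpolation inequality \eqref{interpolation GN} (together with \eqref{interpolation estremi} to collect the product of derivatives of $u$ into $\|u\|_r$ with constant exponents). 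This is exactly the computation in \cite{Ioo-Plo-Tol}, Appendix G, or \cite{Moser-Pisa-66}; I would simply cite it and state the constant's dependence on $r,d,m$.

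Next I would prove $(ii)$. Here I write the Taylor remainder in integral form,
\[
\tilde f(u+h) - \sum_{n=0}^{N} \frac{1}{n!}\,\tilde f^{(n)}(u)[h]^n
= \frac{1}{N!}\int_0^1 (1-\theta)^N\, \big(\tilde f^{(N+1)}(u+\theta h)[h]^{N+1}\big)\, d\theta,
\]
where $\tilde f^{(N+1)}(v)(x)$ is the $(N+1)$-st $y$-derivative of $f$ evaluated at $(x,v(x))$. Each integrand is again a Nemytskii-type expression to which part $(i)$ applies, giving a bound $C K_r(\|h^{N+1}\|_r + \|h^{N+1}\|_{L^\infty}\,\|u+\theta h\|_r)$ for the $H^r$ norm of the product; then the mixed tame product \eqref{mixed norms tame product} applied $N$ times factors out $\|h\|_{L^\infty}^N$ from $\|h^{N+1}\|_r$, and the embedding controls $\|u+\theta h\|_r \le \|u\|_r + \|h\|_r$ while $\|h^{N+1}\|_{L^\infty}\le \|h\|_{L^\infty}^{N+1}$. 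Integrating in $\theta\in[0,1]$ preserves the estimate. The only care needed is that $u(x)+\theta h(x)\in B_1$ for all $\theta\in[0,1]$ and all $x$, which follows from the hypothesis $u(x),u(x)+h(x)\in B_1$ by convexity of $B_1$.

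For $(iii)$, I would apply $(i)$ and $(ii)$ with the composite unknown $\tilde u := (u,Du,\dots,D^p u) \in H^{r}(\T^d,\R^m)$ in place of $u$, noting $\|\tilde u\|_r \le C\|u\|_{r+p}$; the Taylor expansion of $\tilde f$ in $\tilde u$ along the direction $\tilde h := (h,Dh,\dots,D^p h)$ is precisely the Taylor expansion of $\tilde f$ in $u$ along $h$ (chain rule is linear in the direction), and $\|\tilde h\|_{L^\infty} \le \|h\|_{W^{p,\infty}}$, $\|\tilde h\|_r \le C\|h\|_{r+p}$, which yields \eqref{Taylor composition}. Finally, $(iv)$ is obtained by repeating the arguments of $(i)$--$(iii)$ verbatim with $W^{r,\infty}$ norms replacing $H^s$ norms: the Fa\`a di Bruno expansion is the same, the embedding step is trivial (the $L^\infty$ norm of a $W^{r,\infty}$ function is controlled by its own norm), and the interpolation inequalities \eqref{interpolation GN},\eqref{interpolation estremi} and the mixed product \eqref{mixed norms tame product} are stated in Lemma~\ref{lemma:standard Sobolev norms properties} to hold in the $|\cdot|_s$ scale as well. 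The main technical obstacle I anticipate is the bookkeeping in the Fa\`a di Bruno / interpolation step of $(i)$ — keeping the exponents on the various $\|u\|$ factors summing correctly so that only $\|u\|_r$ (to the first power) and $\|u\|_{s_0}$ appear — but this is entirely standard and I would lean on the cited references rather than reproduce it.
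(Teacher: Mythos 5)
Your proposal matches the paper's proof in all essentials: part $(i)$ by citing Moser's classical tame composition estimate, part $(ii)$ by writing the Taylor remainder in integral form and estimating the $H^r$ norm of the integrand with $(i)$ plus the tame product rule, part $(iii)$ by passing to the composite unknown $\tilde u = (u,Du,\ldots,D^p u)$ and its linear companion $\tilde h$, and part $(iv)$ by observing that the argument only uses algebra, interpolation and embedding, all of which hold in the $W^{r,\infty}$ scale. The paper is terser (it simply points to Moser, Rabinowitz and Hamilton, and notes the Minkowski--H\"older inequality $\| \int_0^1 u(\lambda,\cdot)\,d\lambda \|_r \leq (\int_0^1 \|u(\lambda,\cdot)\|_r^2\,d\lambda)^{1/2}$ for the $\theta$-integral), but the route is the same and your filled-in details are sound.
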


\begin{proof}
$(i)$. See \cite{Moser-Pisa-66}, section 2, pages 272--275. 
$(ii)$. Use Taylor's formula with integral rest and the inequality 
$\| \int_0^1 u(\lm,\cdot) \, d\lm \|_r^2 \leq \int_0^1 \| u(\lm,\cdot) \|_r^2 \, d\lm$, which holds for $u(\lm,x) \in H^r(\T^d_x)$, depending on the parameter $\lm$, by H\"older's inequality. 
As an alternative, see \cite{Rabinowitz-tesi-1967}, Lemma 7 in the Appendix, pages 202--203.
$(iii)$. Consider $\tilde u = (u,Du, \ldots,D^p u)$ and apply $(i),(ii)$. See also \cite{Moser-Pisa-66}, page 275. 
$(iv)$. See \cite{Hamilton}, Lemma 2.3.4, page 147 for $(i)$ in the $W^{r,\infty}$ case. 
$(ii),(iii)$ can be adapted with no difficulty (the $W^{r,\infty}$ norms satisfy the algebra and interpolation properties, which are the core of the proofs).
\end{proof}

$(iii)$ of Lemma \ref{lemma:composition of functions, Moser} is used for the nonlinearity $\mN(u)$. 
$(ii)$ is also used for $N=0$, $u=0$, mainly for $f(y)=e^y$, $f(y)=\cos(y)$, $f(y)=(1+y)^p$, $p \in \R$: 
\begin{equation} \label{tame veloce infty}
| f(h) - f(0) |_s \leq C | h |_s 
\quad \forall h \in W^{s,\infty}(\T^2,\R), \ \ |h|_0 < 1,
\end{equation}
where $C$ depends on $f$ and $s$. 

\medskip

The next lemma is also classical, see for example \cite{Hormander-geodesy}, Appendix, and \cite{Ioo-Plo-Tol}, Appendix G. 
However, in those papers it is stated slightly differently than in Lemma \ref{lemma:utile}, especially part $(i)$, therefore we prove it, adapting Lemma 2.3.6 on page 149 of \cite{Hamilton}.

\begin{lemma}[Change of variable]  \label{lemma:utile} 
Let $p:\R^d \to \R^d$ be a $2\p$-periodic function in $W^{m,\infty}$, 
$m \geq 1$, with $|Dp|_0 \leq 1/2$. Let $f(x) = x + p(x)$. Then:

$(i)$ $f$ is invertible, its inverse is $f\inv(y) = g(y) = y + q(y)$, where $q$ is periodic, $q \in W^{m,\infty}(\T^d,\R^d)$, and $|q|_m 
\leq C |p|_m$. More precisely,
\[
| q |_0 = | p |_0, \quad   
| Dq |_0 \leq 2 | Dp |_0 \leq 1, \quad   
| Dq |_{m-1} \leq C | Dp |_{m-1}.
\]
The constant $C$ depends on $d,m$. 

$(ii)$ If $u \in H^m(\T^d,\C)$, then $u\circ f(x) = u(x+p(x))$ is also in $H^m$, and, with the same $C$ as in $(i)$, 
\[
\| u \circ f \|_m \leq C (\|u\|_m + |Dp|_{m-1} \|u\|_1).
\]

$(iii)$ Part $(ii)$ also holds with $\| \ \|_k$ replaced by 
$| \ |_k$,
namely 
$| u \circ f |_m \leq C (|u|_m + |Dp|_{m-1} |u|_1)$.
\end{lemma}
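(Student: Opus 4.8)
\textbf{Proof plan for Lemma~\ref{lemma:utile}.}

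The plan is to prove the three parts in the order stated, since $(ii)$ and $(iii)$ rest on the construction and estimates of $(i)$. For $(i)$, I would construct the inverse $g(y)=y+q(y)$ by a fixed-point argument: the equation $f(g(y))=y$ is equivalent to $q(y) = -p(y+q(y))$, so I set $T(q)(y) := -p(y+q(y))$ and observe that, on the closed ball $\{|q|_0 \le |p|_0\}$ of periodic continuous functions, $T$ is a contraction because $|Dp|_0 \le 1/2$; this gives existence, uniqueness, periodicity of $q$, and the bound $|q|_0 = |p|_0$ at once (the equality, not just $\le$, follows since $|p(y+q(y))|$ ranges over the same values as $|p|$ by periodicity and surjectivity of $y\mapsto y+q(y)$). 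The bound $|Dq|_0 \le 2|Dp|_0 \le 1$ comes from differentiating $q(y)=-p(y+q(y))$, giving $Dq = -(Dp)\circ g \cdot (I+Dq)$, hence $|Dq|_0 \le |Dp|_0(1+|Dq|_0)$, which rearranges to $|Dq|_0 \le |Dp|_0/(1-|Dp|_0) \le 2|Dp|_0$. For the higher-order estimate $|Dq|_{m-1} \le C|Dp|_{m-1}$ I would differentiate the relation $m-1$ more times, expand by the Faà di Bruno / Leibniz rule, isolate the top-order term $(Dp)\circ g\cdot D^{m-1}(Dq)$, and close the estimate by induction on the order using the interpolation inequality \eqref{interpolation estremi} to absorb products of intermediate-order factors; this mirrors Lemma~2.3.6 of \cite{Hamilton}.

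For $(ii)$, I would reduce to $(iii)$-type reasoning but with $L^2$ control. The cleanest route is to write, for a multi-index $\a$ with $|\a| = m$, $\pa^\a(u\circ f)$ via the chain rule as a sum of terms $((\pa^\b u)\circ f)\cdot P_\b$, where each $P_\b$ is a polynomial in the derivatives of $p$ of total order $m-|\b|$ (and $|\b|\ge 1$ except for the single term $((\pa^\a u)\circ f)\cdot$ a product of first derivatives of $f$, which is bounded pointwise). Change variables $x = g(y)$ in the integral $\int_{\T^d} |\pa^\a(u\circ f)(x)|^2\,dx$; the Jacobian $|\det Dg|$ is bounded above and below by constants depending only on $d,m$ since $|Dp|_0\le 1/2$. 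One is then left to estimate $\|((\pa^\b u)\circ f)\cdot P_\b\|_0$ after the substitution, i.e.\ $\|(\pa^\b u)\cdot (P_\b\circ g)\|_0 \le \|\pa^\b u\|_0\,|P_\b\circ g|_0 \le \|u\|_{|\b|}\, C|Dp|_{m-|\b|}^{\,\cdot}$, and the interpolation inequality \eqref{interpolation estremi} (together with $|Dp|_{m-1}$ dominating the relevant intermediate $|Dp|$-norms) collapses all these into $C(\|u\|_m + |Dp|_{m-1}\|u\|_1)$. Part $(iii)$ is identical but with $\sup$-based norms throughout: no change of variables is even needed since $|v\circ f|_0 = |v|_0$, and the $W^{k,\infty}$ norms satisfy the same algebra and interpolation properties (Lemma~\ref{lemma:standard Sobolev norms properties}$(iii)$).

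The main obstacle I expect is the bookkeeping in the higher-order chain-rule expansion in $(i)$ and $(ii)$: one must track that in $D^k(Dq)$ the only term carrying $k$ derivatives on $q$ is the ``linear'' term $(Dp)\circ g\cdot D^k(Dq)$ (coefficient bounded by $1/2$, so it can be moved to the left side), while every other term is a product of strictly-lower-order factors whose $L^2$ (resp.\ $L^\infty$) norm is controlled by \eqref{asymmetric tame product} / \eqref{interpolation estremi} and the inductive hypothesis. This is routine but delicate, and it is exactly the place where the hypothesis $|Dp|_0 \le 1/2$ is used quantitatively rather than merely for invertibility. Everything else — periodicity, the contraction argument, the Jacobian bounds — is straightforward.
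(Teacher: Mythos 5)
Your proposal is correct and follows essentially the same route as the paper's proof: a contraction/fixed-point argument for the inverse (the paper runs it pointwise in $y$ rather than in a function space, but that is cosmetic), the identity $q+p\circ g=0$ for $|q|_0=|p|_0$ and for $Dq$, and then a chain-rule expansion combined with interpolation and induction for the higher-order derivative bounds; part~$(ii)$ likewise uses the change of variables $x=g(y)$ with the Jacobian bound and interpolation, and $(iii)$ the $W^{k,\infty}$ analogue. The only substantive difference is that you differentiate $q=-p\circ g$ directly and isolate the term $(Dp\circ g)\,D^{m-1}(Dq)$ (coefficient $\le 1/2$), whereas the paper differentiates $f\circ g=\mathrm{id}$, solves for $D^m g$, and reorganizes the resulting sum — these are equivalent, and the remaining bookkeeping is the same.
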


\begin{proof} $(i)$. 
For every $y \in \R^d$, the map $G_y : \R^d \to \R^d$, $G_y(x) = y-p(x)$ is a contraction because $|Dp|_0 \leq 1/2$, therefore $G_y$ has a unique fixed point $x=G_y(x)$ in $\R^d$, and the inverse function $g = f\inv : \R^d \to \R^d$ is globally defined. Let $q(y) := g(y) - y$.

Since $p$ is periodic, $f(x+2\p m) = f(x) + 2\p m$ for all $m \in \Z^d$. 
Applying $g$ to this equality gives $x+2\p m = g(f(x) + 2\p m)$, namely $g(y) + 2\p m = g(y + 2\p m)$ where $y=f(x)$, and this means that $q$ is periodic.  
Hence $g$, like $f$, is also a bijection of $\T^d$ onto itself.

The identity $f(g(y))=y$ gives 
\begin{equation} \label{gamma p}
q(y) + p(y+q(y)) = 0, \quad 
q(x+p(x)) + p(x) = 0 \quad \forall x,y \in \R^d.
\end{equation}
\eqref{gamma p} implies that $|q|_0 = |p|_0$. 
By Neumann series, the matrix $Df(x) = I + Dp(x)$ is invertible for a.e. $x$, 
$(Df(x))\inv = \sum_{n=0}^\infty (-Dp(x))^n$, and $| (Df)\inv |_0 \leq 2$. 
Differentiang \eqref{gamma p},
\begin{equation} \label{D gamma}
Dq(y) = - \big[ Df(y+q(y)) \big]\inv Dp(y+q(y)) = \sum_{n=1}^\infty [-Dp(g(y))]^n ,
\end{equation}
whence $|Dq|_0 \leq 2 |Dp|_0 \leq 1$. 
Differentiating \eqref{D gamma}, 
\[ 
(D^2q)(y) = - \big[ (Df)(g(y)) \big]\inv (D^2p)(g(y))\,Dg(y)\,Dg(y),
\] 
and $|D^2 q|_0 \leq 8 |D^2 p|_0$. $(i)$ is proved for $m=1$ and $m=2$.

In general, by the ``chain rule'', the $m$-th Fr\'echet derivative of the composition of functions 
$u \circ v$ is 
\begin{equation} \label{Dm uv composition}
D^m(u\circ v)(x) = \sum_{k=1}^m \sum_{j_1+\ldots+j_k = m} C_{kj} 
(D^k u)(v(x)) \, [D^{j_1}v(x), \ldots , D^{j_k} v(x)],
\end{equation} 
where $j_1, \ldots, j_k \geq 1$, and $C_{kj}$ are constants depending on $k,j_1,\ldots,j_k$ 
(\cite{Hamilton}, page 147).
Apply \eqref{Dm uv composition} to $f \circ g$: since $f(g(y))=y$, $D^m(f\circ g)=0$ for all $m \geq 2$.
Separate $k=1$ from $k \geq 2$ in the sum \eqref{Dm uv composition} and solve for $D^m g$, 
\[
D^m g(y) = 
- Dg(y) \, \sum_{k=2}^m \, \sum_{j_1+\ldots+j_k = m} C_{kj} 
(D^k f)(g(x)) \, [D^{j_1}g(y), \ldots , D^{j_k}g(y)].
\]
$D^m g = D^m q$ and $D^k f = D^k p$ because $k,m \geq 2$. 
Since $k \geq 2$, it is $1 \leq j_i \leq m-1$ for all $i=1,\ldots,k$, because there are at least two $j_1,j_2$, each of them $\geq 1$, and $\sum j_i = m$.  
For $k=m$ one has $j_i = 1$ for all $i=1,\ldots,m$, and the corresponding term in the sum is estimated 
\[
| (D^m p) \circ g \, [Dg,\ldots,Dg] |_0
\leq | D^m p |_0 |Dg|_0^m
\leq C |Dp|_{m-1},
\]
because $|Dg|_0 = |I+Dq|_0 \leq 2$. 
For $2 \leq k \leq m-1$, at least one among $j_1, \ldots, j_k$ is $\geq 2$ (otherwise $k=m$). 
Let $\ell$ be the number of indices $j_i$ that are $\geq 2$, so that $1 \leq \ell \leq k$.
It remains to estimate
\begin{equation} \label{somma riorganizzata}
\sum_{k=2}^{m-1} \, \sum_{\ell = 1}^{k} \  \sum_{\s_1 + \ldots + \s_\ell = m-k+\ell} 
C_{k\ell\s} (D^k p)(g(y))\, [Dg(y)]^{k-\ell} [ D^{\s_1} q(y), \ldots, D^{\s_\ell} q(y)],
\end{equation}
where indices $j_i \geq 2$ have been renamed $\s_1, \ldots \s_\ell$, the number of indices $j_i = 1$ is $k-\ell$, and $D^{\s_i}g = D^{\s_i} q$ because $\s_i \geq 2$. 
Every factor $Dg$ in \eqref{somma riorganizzata} is estimated by $|Dg|_0 \leq 2$. 
For the remaining factors use the interpolation between $0$ and $m-2$, which is possible because 
$1 \leq \s_i-1 \leq m-2$, and use the formula $\s_1 + \ldots + \s_\ell = m-k+\ell$,
\begin{align*}
|(D^k p) \circ g \, (D^{\s_1} q) \ldots (D^{\s_\ell} q)|_0 
& \leq |D^{k-2} D^2 p|_0  |D^{\s_1-1} Dq|_0 \ldots |D^{\s_\ell-1} Dq|_0 
\\ 
& \leq C |D^2 p|_0^{\frac{m-2-(k-2)}{m-2}}  |D^2 p|_{m-2}^{\frac{k-2}{m-2}} 
\prod_{i=1}^\ell |Dq|_0^{\frac{m-2 -(\s_i-1)}{m-2}} |Dq|_{m-2}^{\frac{\s_i-1}{m-2}}
\\
& = C |Dq|_0^{\ell-1} (|D^2 p|_0 |Dq|_{m-2})^{1-\frac{k-2}{m-2}} 
(|D^2 p|_{m-2} |Dq|_0)^{\frac{k-2}{m-2}} 
\\ 
& \leq C|Dq|_0^{\ell-1} (|D^2 p|_0 |Dq|_{m-2} + |D^2 p|_{m-2} |Dq|_0) 
\\ 
& \leq C(|Dq|_{m-2} + |Dp|_{m-1}).
\end{align*}
Collecting all the terms in the sum, we have proved that 
\begin{equation}  \label{indu}
|D^m q|_0 \leq C (|Dp|_{m-1} + |Dq|_{m-2}).
\end{equation}
Now use the induction on $m$. We have already proved $(P_m)$ $|Dq|_{m-1} \leq C|Dp|_{m-1}$ for $m=2$. 
Assume that $(P_{m-1})$ holds. Then $(P_m)$ follows from \eqref{indu}.

\smallskip

$(iii)$ follows a similar argument, using formula \eqref{Dm uv composition} and interpolation for $W^{k,\infty}$ norms; see \cite{Hamilton}, Lemma 2.3.4, page 147.

\smallskip

$(ii)$ $\|u\circ f\|_0 \leq C \|u\|_0$, because,  
changing variable $x=g(y)$ in the integral,
\begin{equation} \label{integral trick}
\| u \circ f \|_0^2 = \int_{\T^d} |u(f(x))|^2\,dx 
= \int_{\T^d} |u(y)|^2\,|\det Dg(y)|\,dy 
\leq \| \det Dg \|_{L^\infty} \int_{\T^d} |u(y)|^2 dy 
\leq C \|u\|_0^2.
\end{equation}
The $m$-th derivative of $u \circ f$, $m \geq 1$, is given by formula \eqref{Dm uv composition}. 
The $L^2$ norm of a typical term of the sum is estimated by  
\[
\| D^k u(f(x)) \, [D^{j_1}f(x), \ldots , D^{j_k}f(x)] \|_0 
\leq \| (D^k u) \circ f \|_0 \| D^{j_1}f \|_{\linf} \ldots \| D^{j_k}f \|_\linf.
\]
$\| (D^k u) \circ f \|_0 \leq C \| D^k u \|_0 \leq C \|Du\|_{k-1}$ by \eqref{integral trick}. 
Use interpolation \eqref{interpolation GN} for $\| Du \|_{k-1}$ and interpolation with $W^{k,\infty}$ norms for all $D^{j_i-1} Df$ between $0$ and $m-1$, which is possible because 
$k-1,j_i-1$ are all in the interval $[0,m-1]$. (Remember that $Df$ is periodic, while $f$ is not). 
We get 
\[
\| D^k u \|_0 \| D^{j_1}f \|_{\linf} \ldots \| D^{j_k}f \|_\linf
\leq C \|Df\|_\linf^{k-1} (\| Du \|_{m-1} \|Df\|_\linf  + \| Du \|_0 \|Df\|_{W^{m-1,\infty}}). 
\]
Now $\|Df\|_{\linf} \leq 2$, and $\|Df\|_{W^{m-1,\infty}} \leq C(1 + \|Dp\|_{W^{m-1,\infty}})$.  
The sum gives the thesis. 
\end{proof}

The next lemma estimates the commutator of $\mH$ with multiplication operators and changes of variables that are used in the paper. 
See also \cite{Ioo-Plo-Tol}, Appendices H and I.

\begin{lemma}[Commutators of $\mH$] \label{lemma:commutators} 
1) Let $s,m_1,m_2 \in \N$, with $s \geq 2$, $m_1, m_2 \geq 0$, $m=m_1+m_2$. 
Let $f(t,x) \in H^{s+m}(\T^2,\C)$. 
Then $[f,\mH]u = f \mH u - \mH(fu)$ satisfies 
\[
\| \pa_x^{m_1} [f,\mH] \pa_x^{m_2} u \|_{s} 
\leq C(s) (\| u \|_s \|f\|_{m+2} + \| u \|_2 \|f\|_{m+s}).
\]
2) Let $a:\T \to \T$ a function, and $Au(t,x) = u(a(t),x)$. Then $[A,\mH]=0$.

\medskip

\noindent
3) There exists a universal constant $\d \in (0,1)$ with the following property. Let $s,m_1,m_2 \in \N$, $m=m_1+m_2$, 
$\b(t,x) \in W^{s+m+1,\infty}(\T^2,\R)$, with $|\b|_1 \leq \d$. Let $Bh(t,x) = h(t,x+\b(t,x))$, $h\in H^s(\T^2,\C)$. Then 
\[
\| \pa_x^{m_1} (B\inv \mH B - \mH) \pa_x^{m_2} h \|_s 
\leq C(s,m) ( | \b |_{m+1} \| h \|_s + | \b |_{s+m+1} \| h \|_0).
\]
\end{lemma}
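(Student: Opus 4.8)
\emph{Plan of proof of Lemma \ref{lemma:commutators}.} Part 2 is immediate: $A$ acts only on the time variable while $\mH$ is a Fourier multiplier in the space variable only, so for $u(t,x)=\sum_j u_j(t)e^{ijx}$ both $A\mH u$ and $\mH Au$ equal $\sum_j(-i\,\sign(j))u_j(a(t))e^{ijx}$, whence $[A,\mH]=0$. For Part 1 I would work on the Fourier side: writing $k=(l,j)$, the Fourier coefficient of $\pax^{m_1}[f,\mH]\pax^{m_2}u$ at $k$ is
\[
(-i)\sum_{(l',j')}\hat f_{(l-l',j-j')}\,\hat u_{(l',j')}\,(ij)^{m_1}(ij')^{m_2}\big(\sign(j')-\sign(j)\big),
\]
and the elementary point is that $\sign(j')-\sign(j)$ vanishes unless $j$ and $j'$ have strictly opposite signs or one of them is $0$, in which case $|j|\le|j-j'|$ and $|j'|\le|j-j'|$; hence on the support of the symbol $\la j\ra^{m_1}\la j'\ra^{m_2}\le C\,\la j-j'\ra^{m}$, $m=m_1+m_2$. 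Therefore $|\widehat{\pax^{m_1}[f,\mH]\pax^{m_2}u}_k|\le C\,(|\hat g|*|\hat u|)_k$, where $\widehat g_k:=\la j\ra^m|\hat f_k|$ (so $\|g\|_\sigma\le\|f\|_{\sigma+m}$ for all $\sigma\ge0$); introducing $g',u'$ with $\widehat{g'}_k=|\hat g_k|$, $\widehat{u'}_k=|\hat u_k|$ one gets $\|\pax^{m_1}[f,\mH]\pax^{m_2}u\|_s\le\|g'u'\|_s$, which by the asymmetric tame product \eqref{asymmetric tame product} at $s_0=2$ is $\le C(s)(\|f\|_{s+m}\|u\|_2+\|f\|_{m+2}\|u\|_s)$, the claim.

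Part 3 is the substantial one, and I would use the explicit periodic kernel $\mH h(x)=\tfrac1{2\pi}\,\mathrm{p.v.}\!\int_\T h(w)\cot(\tfrac{x-w}{2})\,dw$. Fix $t$, write $\psi(t,x)=x+\beta(t,x)$ and $\psi^{-1}(t,y)=y+\tilde\beta(t,y)$ (periodic $\tilde\beta$, with $|\tilde\beta|_\ell\le C|\beta|_\ell$ and $|\tilde\beta|_1\le C\delta$ by Lemma \ref{lemma:utile}); the substitution $w=\psi(t,\cdot)$ shows that $B\inv\mH B-\mH$ is the integral operator with kernel
\[
K(t;x,w)=\tfrac1{2\pi}\Big[\cot\!\Big(\tfrac{\psi^{-1}(t,x)-\psi^{-1}(t,w)}{2}\Big)\big(1+\tilde\beta_w(t,w)\big)-\cot\!\Big(\tfrac{x-w}{2}\Big)\Big].
\]
The heart of the matter is that $K$ extends to a $C^\infty$ function of $(t,x,w)$: writing $\cot\phi=\phi^{-1}+c(\phi)$ with $c$ analytic and odd near $0$, factoring $\psi^{-1}(t,x)-\psi^{-1}(t,w)=(x-w)\gamma(t;x,w)$ with $\gamma(t;x,w)=\int_0^1\paxi\psi^{-1}(t,w+s(x-w))\,ds$, and noting $\gamma(t;x,x)=1+\tilde\beta_y(t,x)$, one sees that the $(x-w)^{-1}$-singular parts cancel exactly, the coefficient of $(x-w)^{-1}$ being $\tfrac1\pi\big(\tfrac{1+\tilde\beta_w(t,w)}{\gamma(t;x,w)}-1\big)$, which vanishes on $\{x=w\}$ and is therefore divisible by $(x-w)$ with smooth quotient; smallness of $|\tilde\beta|_1$ keeps $\gamma$ uniformly near $1$, so no other singularity appears. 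Integrating by parts to move $\pax^{m_2}$ onto the kernel, $\pax^{m_1}(B\inv\mH B-\mH)\pax^{m_2}$ becomes the integral operator with smooth kernel $(-1)^{m_2}\pax^{m_1}\pa_w^{m_2}K$.

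It then remains to estimate this kernel. Differentiating the $\cot$-of-a-function expression via the Fa\`{a} di Bruno and Leibniz rules, using the composition estimates of Lemma \ref{lemma:composition of functions, Moser}, the interpolation inequalities of Lemma \ref{lemma:standard Sobolev norms properties}, and Lemma \ref{lemma:utile} to trade norms of $\tilde\beta$ for norms of $\beta$, I would get a tame bound $|\pax^{m_1}\pa_w^{m_2}K|_{s}\le C(s,m)(|\beta|_{m+1}+|\beta|_{s+m+1})$ for its mixed $(t,x,w)$-norms --- linear in the top-order norm of $\beta$, the lower-order norms entering only through the $\delta$-small $|\beta|_1$, since $K$ vanishes when $\beta=0$ and, modulo factors $1+O(\delta)$, is linear in $\tilde\beta$. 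Feeding this into the standard mapping estimate for integral operators with a smooth kernel (Cauchy--Schwarz in $w$, then Minkowski and interpolation) yields $\|\pax^{m_1}(B\inv\mH B-\mH)\pax^{m_2}h\|_s\le C(s,m)(|\beta|_{m+1}\|h\|_s+|\beta|_{s+m+1}\|h\|_0)$, as required. The step I expect to be the main obstacle is exactly this Part 3 bookkeeping: carrying out cleanly the cancellation of the diagonal singularity of $K$, and above all organising the chain-rule and interpolation estimates so that the kernel bound stays tame (linear in the highest-order norm of $\beta$) while the time variable is dragged along throughout. Parts 1 and 2 are routine.
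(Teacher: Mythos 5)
Parts 1 and 2 follow the paper's argument essentially verbatim: the Fourier-side support condition (the commutator symbol vanishes unless the two space frequencies $j,j'$ have non-agreeing signs, in which case $|j|,|j'|\le|j-j'|$, so the extra derivatives can be shifted onto $f$), and the trivial commutation of the time-only change of variable with the $x$-Fourier multiplier $\mH$. For Part 3 your plan shares the paper's framework — the principal-value kernel for $\mH$, the change of variables exhibiting $B^{-1}\mH B-\mH$ as an integral operator with kernel $K$, and then showing $K$ is smooth and tame in $\b$ — but you organise the diagonal cancellation differently. You expand $\cot\phi=\phi^{-1}+c(\phi)$, factor $\psi^{-1}(t,x)-\psi^{-1}(t,w)=(x-w)\gamma(t;x,w)$ with a divided-difference $\gamma$, and observe that the coefficient of $(x-w)^{-1}$ vanishes on the diagonal since $\gamma(t;w,w)=1+\tilde\b_y(t,w)$. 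The paper instead packages the whole cancellation into the identity $K=\tfrac1\pi\pa_{y'}\log(1+f)$ with $f=abc$, where $a=\tfrac{(y-y')/2}{\sin\frac12(y-y')}$ is a universal smooth factor, $b=\int_0^1\tilde\b_y(t,\lm y+(1-\lm)y')\,d\lm$ carries the divided difference (so it is $O(|\b|_1)$ in $C^0$ and tame in $W^{\s,\infty}$), and $c$ is a bounded cosine average; smallness of $|\b|_1$ keeps $|f|_0\le\tfrac12$ and the estimate $|K|_\s\le C(\s)|\b|_{\s+1}$ then drops out of the composition and interpolation lemmas. Both routes are sound, but the $\log(1+f)$ trick is tidier: it handles the singularity in one algebraic stroke and yields a one-term kernel bound, whereas your version requires controlling the quotient by $(x-w)$ and the analytic remainder $c$ at the transformed argument, which is the same bookkeeping with more pieces. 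One small imprecision to correct in your sketch: the intermediate bound you anticipate, $|\pax^{m_1}\pa_w^{m_2}K|_s\le C(|\b|_{m+1}+|\b|_{s+m+1})$, is not the natural one — the right intermediate statement is the one-term bound $|K|_\s\le C(\s)|\b|_{\s+1}$ for every $\s$, and the two-term tame structure in the Lemma only appears when the integral operator is applied to $h$ and the Leibniz/interpolation for the mixed $(t,y)$-derivatives is carried out.
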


\begin{proof} 
1) Let $u(t,x) = \sum_{k \in \Z} u_k(t)\, e^{ikx}$, $f(t,x) = \sum_{k \in \Z} f_k(t)\, e^{ikx}$, and 
\[
S = \{ (k,j) \in \Z^2 : \sgn(k) - \sgn(j) \neq 0 \}, \quad 
S(k) = \{ j \in \Z : (k,j) \in S \}.
\]
Since $\mH(e^{ikx}) = -i \,\sign(k) \, e^{ikx}$, 
\[
\pa_x^{m_1} [f,\mH] \pa_x^{m_2} u 
= \sum_{k,j\in\Z} f_{j-k}(t)\,u_k(t)\,\d(k,j)\, (ij)^{m_1} (ik)^{m_2} \, e^{ijx} 
= \sum_{(k,j)\in S} \text{(the same)},
\]
where $\d(k,j) := -i\,(\sign(k) - \sign(j))$. 
If $(k,j) \in S$, then 
\[
|k-j| = |k| + |j|, \qquad |j| \leq |j-k|, \quad |k| \leq |j-k|.
\]
Therefore $|j^{m_1} k^{m_2}| \leq |k-j|^m$. 
If $j,k$ are Fourier indices for the space and $n,l$ for the time, 
\begin{align*} 
\| \pa_x^{m_1} [f,\mH] \pa_x^{m_2}u \|_s^2 
& \leq \sum_{n,j} \Big( \sum_{l,k} |f_{(n-l,j-k)}| |j-k|^m |u_{(l,k)}| \Big)^2 \la(n,j)\ra^{2s} 
\leq \sum_{a \in \Z^2} \Big( \sum_{b \in \Z^2} |(\pa_x^m f)_{a-b}| |u_b| \Big)^2 \la a \ra^{2s}
\end{align*}
and this gives the usual tame estimate for the product $(\pa_x^m f) u$. 
The estimate holds with $\| \ \|_{s_0}$ with $s_0 > d/2 = 2/2 = 1$, so we fix $s_0=2$.

\smallskip

2) Trivially $A\mH u(t,x) = \sum_k u_k(a(t))\,(-i\,\sign k)\,e^{ikx} 
= \mH Au(t,x)$.

\smallskip

3) Following \cite{Ioo-Plo-Tol}, Appendix I, 
it is convenient to use the representation of $\mH$ as a principal value integral, 
\begin{equation} \label{mH pv}
\mH u(t,x) = \frac{-1}{2\p}\, p.v.\int_\T \frac{u(t,x')}{\tan \frac12(x-x')}\,dx'
= \frac{-1}{2\p}\, \lim_{\e \to 0^+} 
\Big\{ \int_{x-\p}^{x-\e} + \int_{x+\e}^{x+\p} \Big\} 
\frac{u(t,x')}{\tan \frac12(x-x')}\,dx'.
\end{equation} 
Let $I+\tilde\b$ be the inverse of $I+\b$, namely 
$x+\b(t,x) = y$ if and only if $x=y+\tilde\b(t,y)$.
Changing variable $x'+\b(t,x') = y'$, $dx' = (1+\tilde\b_{y'}(t,y'))\,dy'$ in the integral,
\[
B\inv \mH Bu(t,y) = \frac{1}{\p}\, p.v. \int_{-\p}^\p u(t,y') \,
\pa_{y'} \Big\{ \log \sin \Big( \frac12 
\Big[ y + \tilde\b(t,y) - y' - \tilde\b(t,y') \Big] \Big)\, \Big\} \,dy',
\]
therefore 
\begin{equation} \label{kernel representation}
(B\inv \mH B - \mH)u(t,y) = \int_\T u(t,y')\, K(t,y,y')\,dy',
\end{equation}
where the kernel $K$ is 
\[
K(t,y,y') = \frac{1}{\p} \pa_{y'} \log \Big( 
\frac{\sin \frac12[y + \tilde\b(t,y) - y' - \tilde\b(t,y')]}
{\sin \frac12(y - y')} \Big).
\]
If $\b$ is sufficiently regular, then $K$ is bounded, and the integral in \eqref{kernel representation} is no longer a singular one. 
Denote $\mR = B\inv \mH B - \mH$. Then 
\begin{align*}
\pa_y^{m_1} \mR \pa_y^{m_2} u(t,y) 
= \int_\T (\pa_{y'}^{m_2} u)(t,y') \, \pa_y^{m_1} K(t,y,y')\, dy'
= \int_\T u(t,y') \, (-1)^{m_2} \pa_{y'}^{m_2} \pa_y^{m_1} K(t,y,y')\, dy',
\end{align*}
every space derivative goes on $K$ and does not affect $u$. 
Hence 
\[
\| \mR u\|_0^2 
= \int_{\T^2} \Big| \int_\T u(t,y') K(t,y,y')\,dy' \Big|^2 dy\,dt 
\leq C \int_{\T^3} |u(t,y')|^2 |K(t,y,y')|^2 \,dy'\,dy\,dt 
\leq C |K|_{0}^2 \, \|u\|_0^2,
\]
for $\|\pa_y^s (\pa_y^{m_1} \mR \pa_y^{m_2} u)\|_0$ 
replace $K$ with $\pa_y^{s+m_1} \pa_{y'}^{m_2} K$ 
and for $\| \pa_t^s (\pa_y^{m_1} \mR \pa_y^{m_2} u)\|_0$ calculate the usual derivatives of a product. Thus
\[
\| \pa_y^{m_1} \mR \pa_y^{m_2} u \|_s 
\leq C (\| u \|_s | K |_{m} + \| u \|_0 | K |_{s+m}).
\]
Now write $K = (1/\p) \pa_{y'} \log(1+f)$, where 
\[
f(t,y,y') = \frac{\sin \frac12[y + \tilde\b(t,y) - y' - \tilde\b(t,y')]
- \sin \frac12(y - y')}{\sin \frac12(y - y')} ,
\]
and decompose $f = a b c$, 
\[
a(y,y') = \frac{\frac12(y-y')}{\sin \frac12(y-y')}, \quad 
b(t,y,y') = \frac{\tilde\b(t,y) - \tilde\b(t,y')}{y - y'}\, 
= \int_0^1 \tilde\b_{y}(t,\lm y + (1-\lm)y')\,d\lm,
\]
\[
c(t,y,y') = \int_0^1 \cos \Big(\frac{y-y'+ \lm [\tilde\b(t,y) - \tilde\b(t,y')]}{2}\Big)\, d\lm.
\]
$a \in C^\infty$ for $|y'-y|\leq \p$ (by periodicity, take $\T = [y-\p,y+\p]$ when integrating in $dy'$).
$|b|_{s} \leq C |\tilde\b|_{s+1} \leq C |\b|_{s+1}$ by Lemma \ref{lemma:utile}$(i)$.  
All the derivatives of $c$ of order $\leq s$ are bounded if $\tilde\b \in 
W^{s,\infty}$, with tame estimate 
\[
| c |_{s} \leq C(s,|\tilde\b|_0)\,(1+|\tilde\b|_{s})
\leq C(s,|\b|_0)\,(1+|\b|_{s}).
\]
As a consequence $|f|_{0} \leq 1/2$ if $|\b|_1 \leq \d$ for some universal $\d \in (0,1)$, and 
$| K |_{s} \leq C(s) | \b |_{s+1}$.
\end{proof}

\begin{remark} Inequality 1) of Lemma \ref{lemma:commutators} can also be proved in a simple way using \eqref{mH pv}, see \cite{Ioo-Plo-Tol}, Appendix H.
\end{remark}

\section{\large{Appendix C. Proofs}}
\label{sec:appendix proofs}

\begin{proof}[\emph{\textbf{Proof of Proposition \ref{prop:a12345}}}]
Apply Lemma \ref{lemma:composition of functions, Moser}$(iv)$: let $f(x,y) = \pa_{y}^\a g_i(x,y)$, $|\a|=1$. By \eqref{g order 4}, $\pa_y^\b f(x,0) = 0$ for all $|\b| \leq 2$, and, by Taylor's formula \eqref{Taylor composition} for $N=2$ (with $\tilde f$ defined as in Lemma \ref{lemma:composition of functions, Moser}), 
\begin{equation} \label{Taylor per a1}
| \tilde f(U) |_s 
= \Big| \tilde f(U) - \sum_{n=0}^2 \frac{1}{n!}\, \tilde f^{(n)}(0)[U]^n \Big|_s 
\leq C(s) |U|_2^2 |U|_{s+2} 
\leq C(s) \| U \|_4^2 \|U\|_{s+4}.
 \end{equation}
Suppose that $a_1 = (\pa_y^\a g_i)(x,U,\mH U,\ldots) 
= \tilde f(U)$, where $U = \e \bar v + \e^2 u$. 
Then \eqref{Taylor per a1} gives 
\[
|a_1|_s 
\leq C(s) \| \e \bar v + \e^2 u \|_4^2 \|\e \bar v + \e^2 u \|_{s+4}
\leq \e^3 C(s) (\| \bar v \|_4 + \e K)^2 (\| \bar v \|_{s+4} + \e 
\| u \|_{s+4}) 
\leq \e^3 C(s,K) (1+\|u\|_{s+4})
\]
because $\|u\|_4 \leq K$ and $\| \bar v \|_{s+4}$ is a certain constant $C(s)$  depending on $s$.
Also $a_2, a_4, a_3 - 3 U^2$ and $a_5 - 3(U^2)_x$ are of the type $(\pa_y^\a g_i)(x,U,\mH U,\ldots)$, therefore they satisfy the same estimate as $a_1$. 
The additional part in $a_3$ and $a_5$ comes from the cubic term $\pa_x(U^3)$ of the nonlinearity $\mN(U)$. One has 
\[
|U^2 - \e^2 \bar v^2|_s = \e^3 |2 \bar v u + \e u^2|_s 
\leq \e^3 C(s,K) |u|_s 
\leq \e^3 C(s,K) \|u\|_{s+2}
\]
because $U = \e \bar v + \e^2 u$, and the estimate for $a_3 - \e^2 3 \bar v^2$ follows. Similarly for $a_5$. 

The derivatives $\pa_u a_1$ and $\pa_\e a_1$ are obtained differentiating the equality $a_1 = (\pa_y^\a g_i)(x,U, \mH U,\ldots)$, therefore they involve $\pa_y^\b g_i$ with $|\b|=2$. Then apply Taylor's formula \eqref{Taylor composition} with $N=1$ and evaluate at $U$, as above. 
\end{proof}

\begin{remark} 
In the estimate for $\pa_u a_i$ there is a factor $\e^2$ more than in the one for $\pa_\e a_i$ because $\pa_u U[h] = \e^2 h = O(\e^2)$, while 
$\pa_\e U = \bar v + 2 \e u = O(1)$. 
The point becomes very evident in the simplest case $g(x,U,\ldots) = U^4$.
\end{remark}

\begin{proof}[\emph{\textbf{Proof of Proposition \ref{prop:Psi mM mR3}}}]

By Proposition \ref{prop:a12345}, for $s=0$ and $\e < \e_0$, 
$|a_1|_0 \leq \e^3 C(K) \leq \e_0^3 C(K) \leq 1/2$ if $\e_0$ is small enough. 
$|\int a \,dx |_s \leq 2\p |a|_s$ for all $a(t,x)$. 
Applying \eqref{tame veloce infty} with $f(y) = (1+y)^p$, $p=-1/2, -2$ 
gives
\begin{equation} \label{stima rho}
|\rho - 1|_s \leq C(s,K) |a_1|_s \leq \e^3 C(s,K) (1+\|u\|_{s+4}), 
\quad 0 \leq s \leq r.
\end{equation}
Differentiating the formula for $\rho(u,\e)$, and using estimates on $a_1$,
\begin{equation} \label{stima der rho}
|\pa_u \rho(u,\e)[h]|_s 
\leq C(s,K) (|\pa_u a_1[h]|_s + |a_1|_s |\pa_u a_1[h]|_0) 
\leq \e^4 C(s,K) (\| h \|_{s+4} + \|u\|_{s+4} \|h\|_4), 
\end{equation}
and similarly $|\pa_\e \rho(u,\e)|_s \leq \e^2 C(s,K) (1+\| u \|_{s+4})$, 
for all $0 \leq s \leq r$.

$\mu_2 = \Pi_C(\rho)$, therefore, using \eqref{stima rho} with $s=0$,  
$|\mu_2-1| = |\Pi_C(\rho-1)| \leq |\rho-1 |_0 
\leq \e^3 C(0,K) \|u\|_4 = \e^3 C(K) \leq 1/2$.
Also, 
$|\pa_u \mu_2(u,\e)[h]| = |\Pi_C( \pa_u \rho(u,\e)[h] )| 
\leq |\pa_u \rho(u,\e)[h]|_0$, then use \eqref{stima der rho} with $s=0$.
Similarly for $\pa_\e \mu_2$.

$\a$ satisfies \eqref{propoli 4}, namely $\mu_2 (1+\a') = \rho$.
Thus $\a' = \mu_2\inv [(\rho - 1)+(1-\mu_2)]$, whence 
$|\a'|_s \leq 2(|\rho-1|_s + |\mu_2-1|)$. Moreover 
$|\a|_{s+1} \leq C |\a'|_s$ because $\a \in Y$, $\a(0)=0$, 
and $|\a(t)| $ $= |\a(t) - \a(0)|$ $\leq \p |\a'|_0$ 
for all $|t| \leq \p$ (Poincar\'e inequality for odd functions).
The derivatives of $\a$ are obtained differentiating 
the equality $\mu_2 (1+\a') = \rho$.
Similar argument for $\Pi_E \b$ using \eqref{tame veloce infty}, because $\Pi_E \b_x = \rho^{1/2}(1+a_1)^{-1/2} - 1$ by \eqref{propoli 5}. 
Thus $\a(u,\e)$ and $\Pi_E \b(u,\e)$ satisfy
\begin{align}
\label{stima alfa beta E}
| \a |_{s+1} + | \Pi_E \b |_s + |\Pi_E \b_x |_s 
& \leq \e^3 C(s,K) (1+\|u\|_{s+4}), 
\\
\label{stima alfa beta E der u}
| \pa_u \a[h] |_{s+1} + | \pa_u (\Pi_E \b)[h] |_s 
& \leq \e^4 C(s,K) (\|h\|_{s+4} + \|u\|_{s+4} \| h \|_4), 
\qquad 0 \leq s \leq r,
\\
\label{stima alfa beta E der epsilon}
| \pa_\e \a |_{s+1} + | \pa_\e \Pi_E \b |_s 
& \leq \e^2 C(s,K) (1+\|u\|_{s+4}).
\end{align}

$\s$ is defined in \eqref{propoli sigma}, namely 
$\s = \Pi_{T+C} \{ \om (\Pi_E \b)_t (1+\Pi_E \b_x) + a_3 (1+\Pi_E \b_x)^2\}$. 
Since $\Pi_E \b = O(\e^3)$, the only term of order $\e^2$ in $\s$ comes from $a_3$ and it is $\e^2 \Pi_{T+C}(3\bar v^2)$.
$\bar v$ is a finite sum of $q_j$ \eqref{qj}, therefore $\Pi_T (\bar v^2) = 0$. As a consequence, 
\[
\s - \e^2 \Pi_C(3 \bar v^2) 
= \Pi_{T+C} \{ \om (\Pi_E \b_t) (1+\Pi_E \b_x) 
+ a_3 (\Pi_E \b_x) (2 +\Pi_E \b_x) 
+ (a_3 - \e^2 3 \bar v^2) \}.
\]
Then, using the estimates for $\Pi_E \b$, $(a_3 - \e^2 3 \bar v^2)$ and their derivatives,
\begin{align} 
|\s - \e^2 \Pi_C(3 \bar v^2)|_{s-1} 
& \leq \e^3 C(s,K) (1+\|u\|_{s+4}), 
\label{stima sigma}
\\
|\pa_u \s(u,\e)[h]|_{s-1} 
& \leq \e^4 C(s,K) (\|h\|_{s+4} + \|u\|_{s+4} \| h \|_4),
\qquad 1 \leq s \leq r,
\label{stima der u sigma} 
\\
|\pa_\e \s(u,\e) - \e \Pi_C(6 \bar v^2)|_{s-1} 
& \leq \e^2 C(s,K) (1+\|u\|_{s+4})
\label{stima der epsilon sigma} 
\end{align}
($s-1$ because $|\Pi_E \b_t|_{s-1} \leq |\Pi_E \b|_s$). 

By \eqref{nu gamma}, $\mu_1 = \Pi_C (\s)$, and the estimates for $\mu_1$ follow from \eqref{stima sigma},\eqref{stima der u sigma},\eqref{stima der epsilon sigma} with $s=1$.

Since $\s - \mu_1 = \s - \Pi_C(\s) = \Pi_T(\s)$, 
by \eqref{propoli sigma} 
$\om \g' = \mu_1(1 + \a') - \s = \mu_1 \a' - \Pi_T(\s)$. 
By Poincar\'e inequality, $|\g|_s \leq C |\g'|_{s-1}$ because $\g \in Y$. 
The estimates for $\g = \Pi_T \b$ follow from those for $\s,\a,\mu_1$ and their derivatives, using the fact that $\om = 1 + 3\e^2$.
Hence \eqref{stima alfa beta E}, \eqref{stima alfa beta E der u}, \eqref{stima alfa beta E der epsilon} hold not only for $\Pi_E \b$, but also for $\g = \Pi_T \b$, and, as a consequence, for $\b$ too, 
for $1 \leq s \leq r$.

By Lemma \ref{lemma:utile}$(i)$, $|\tilde\a|_s + |\tilde\b|_s \leq C(s)(|\a|_s + |\b_s)$. Choose a smaller $\e_0$, if necessary, to have $\e_0^3 C(K) < 1/2$ in \eqref{stima diffeo psi basic}.
\eqref{stima Psi},\eqref{stima Psi infty} hold by Lemma \ref{lemma:utile}. 
Since 
\begin{equation} \label{equa diffeo}
\a(t) + \tilde\a(t + \a(t)) = 0, \quad 
\b(t,x) + \tilde\b \big( t + \a(t), x + \b(t,x) \big) = 0 \quad \forall (t,x) \in \T^2,
\end{equation}
the derivatives of $\tilde\a, \tilde\b$ with respect to the parameters $(u,\e)$ 
are obtained by differentiating \eqref{equa diffeo} with respect to $u$ or $\e$, whence 
\[
\pa_u \tilde\a [h] = - (1+\tilde \a_\t) \, \Psi\inv \{ \pa_u \a [h] \},
\quad 
\pa_u \tilde\b [h] = 
- (1 + \tilde\b_y) \, \Psi\inv \{ \pa_u \b [h] \}  
- \tilde\b_\t \, \Psi\inv \{ \pa_u \a[h] \},
\]
and similarly for $\pa_\e \tilde\a$, $\pa_\e \tilde\b$.
(Given a diffeomorphism depending on a parameter, this is nothing but the formula for the derivative of the inverse diffeomorphism with respect to the parameter.) 
Using \eqref{stima alfa beta E der u},\eqref{stima alfa beta E der epsilon} and \eqref{stima Psi infty}, for $s+1 \leq r$ we get
\[
|\pa_u \tilde\b[h]|_s 
\leq \e^4 C(s,K) (\| h \|_{s+4} + \| u \|_{s+5} \| h \|_5),
\quad 
|\pa_\e \tilde\b|_s 
\leq \e^2 C(s,K) (1 + \| u \|_{s+5}),
\]
and the same for $\tilde\a$. These inequalities also hold for $\a,\b$ (actually, $\a,\b$ satisfy \eqref{stima alfa beta E der u},\eqref{stima alfa beta E der epsilon}, which are stronger).

To prove \eqref{stima Psi-I}, consider the one-parameter family of changes of variables 
\[
(\Psi_\lm f)(t,x) = f(\psi_\lm(t,x)), \quad  
\psi_\lm(t,x) = \big( t+\lm\a(t), x+\lm\b(t,x) \big), \quad 
0 \leq \lm \leq 1.
\]
One has 
\[ 
(\Psi-I)f(t,x) 
= f(\psi_1(t,x)) - f(\psi_0(t,x))
= \int_0^1 (\gr f)(\psi_\lm(t,x)) 
\cdot \big( \a(t), \b(t,x) \big) \, d\lm.
\]
Use Lemma \ref{lemma:utile} 
to estimate $\| \Psi_\lm f_t \|_s$ and $\| \Psi_\lm f_x \|_s$, 
then use \eqref{mixed norms tame product}.
The same holds for $\Psi\inv$. 
The estimate for $\tilde\Psi, \tilde\Psi\inv$ hold because $\| \pp h \|_s \leq \| h \|_s$ for all $s$.
Repeat the same argument with norms $| \ |_s$ to prove \eqref{stima Psi-I infty}. 
By the chain rule, the derivative of $\Psi f$ with respect to $u$ in the direction $h$ is 
\[
\pa_u (\Psi f)[h] = \pa_u \{ f(t+\a(t),x+\b(t,x)) \}[h] 
= (\Psi f_t) \pa_u \a[h] + (\Psi f_x) \pa_u \b[h],
\]
therefore \eqref{stima Psi der u} follows using the interpolation \eqref{mixed norms tame product} for products. 
Similarly for \eqref{stima Psi der epsilon}.

Since 
\[
[1 + (\Psi\inv \a')(\t)] (1+\tilde\a'(\t)) = 1,
\]
$(\mM-I)$ is the multiplication by the factor $(\Psi\inv \a')
= - \tilde\a' / (1+\tilde\a') =:p$. 
Hence $(\tilde\mM - I)f = \pp (\mM-I) f = \pp (pf)$ for all $f \in Z_0$, because $\pp = I$ on $Z_0$. 
By Lemma \ref{lemma:composition of functions, Moser}, 
$p$ satisfies the same estimate as $\tilde\a'$, and  $|\tilde\a'|_s \leq C(s) |\a'|_s \leq C(s) |\a|_{s+1}$, then use \eqref{stima alfa beta E} and apply \eqref{asymmetric tame product} to get 
\[
\| pf \|_s \leq \e^3 C(K) \| f \|_s + \e^3 C(s,K) (1+\|u\|_{s+4}) \| f \|_2, \quad 2 \leq s \leq r.
\]
For the derivatives $\pa_u \mM [h]$, $\pa_\e \mM$ use \eqref{stima alfa beta der u},\eqref{stima alfa beta der epsilon}.
Apply Lemma \ref{lemma:tame estimates for operators} to obtain the estimates for $(\tilde\mM\inv-I)$ and its derivatives.

The estimates for $a_i$, $i=6,\ldots,9$ follow from formulae \eqref{mL3} and the estimates for $\Psi\inv$. 
In $a_7$ put the term $\e^2 3 \bar v^2$ in evidence, namely
write  
\[
\frac{\om \b_t + a_3(1+\b_x)}{1+\a'}\, = 
b + q, \quad 
b := \e^2 3 \bar v^2, \quad 
q := \frac{\om \b_t + (a_3-b)(1+\b_x) + b(\b_x - \a')}{1+\a'}\,,
\]
estimate $\Psi\inv(q)$ using \eqref{stima Psi infty}, the inequalities for $\a,\b,(a_3-b)$, and $|b|_s = C(s)$.  
For $\Psi\inv(b) = b + (\Psi\inv - I)b$, use \eqref{stima Psi-I}.
Similarly for $a_9$.
Similar calculations for the derivatives $\pa_u a_i[h]$, $\pa_\e a_i$.

To prove \eqref{stima mRmH}, write $\Psi$ as the composition of the two changes of variables $A$, $B$, 
\begin{equation*} 
\Psi = A B, \quad Ah(t,x) = h(t+\a(t),x), \quad 
Bh(t,x) = h(t,x+\b_1(t,x)),
\end{equation*}
where $\b_1 := A\inv(\b)$, namely $\b_1(t+\a(t),x) = \b(t,x)$. 
By Lemma \ref{lemma:commutators}$(ii)$, $\Psi\inv \mH \Psi = B\inv A\inv \mH A B = B\inv \mH B$. 
By the inequality \eqref{stima Psi infty} for the change of variable $A$, 
$|\b_1|_s \leq \e^3 C(s,K)(1+\|u\|_{s+4})$. Then apply Lemma \ref{lemma:commutators}$(iii)$.

In $\mR_1$ (see \eqref{mR1}) the coefficients of $\pa_y^k \mR_\mH$, $k=0,1,2$, are functions $f_k$ that satisfy $| f_k |_{s} \leq C(s,K) (1+ \|u\|_{s+5})$ for $s+1 \leq r$ (two of them are $a_6, a_8$ without the denominator $(1+\a')$, the other one is \eqref{propoli 1}).  
By \eqref{mixed norms tame product},\eqref{interpolation estremi}, and \eqref{stima mRmH},
\[
\| f_k \pa_y^k \mR_\mH  \pa_y^m h \|_s \leq \e^3 C(s,m,K) 
\big( \|h\|_s (1 + \|u\|_{m+7}) + \| h \|_0 \|u\|_{s+m+7}\big), 
\quad k=0,1,2, 
\]
for $m \geq 0$, $s+m+3 \leq r$. 
For the last term in $\mR_1$ use \eqref{[PiC,Psi]}, the estimate for $\Psi\inv a_5$, 
integration by parts $|\Pi_C(f \pa_y^m h)| = |\Pi_C[(\pa_y^mf)h]|$, 
the inequality $|\Pi_C(fh)| \leq C |f|_0 \|h\|_0$, Lemma \ref{lemma:utile}$(i)$ to pass from $\tilde\a,\tilde\b$ to $\a,\b$,
and \eqref{interpolation estremi}:
\begin{equation} \label{restino bono bono}
\| \pp (\Psi\inv a_5) [\Pi_C,\Psi] \pa_y^m h \|_s 
= \| \Psi\inv a_5 \|_s |[\Pi_C,\Psi] \pa_y^m h| 
\leq \e^5 C(s,m) (1+\|u\|_{s+m+4}) \|h\|_0.
\end{equation}
The estimate for $\mR_1$ follows.
$\mR_2$ satisfies the same estimate as $\mR_1$ because 
$\mR_2 = \mM\inv \mR_1$. 
For $\mR_3$, note that $\Pi_C \mL_2 = \Pi_C (a_9+\mR_2)$. 
Use \eqref{stima mM-I} for $\mM\inv$, then the same arguments as for  \eqref{restino bono bono}.
\end{proof}

\noindent
\textbf{Formula for $\mR_4$}. 
\begin{align*}
\mR_4 
& = \mR_3 \pp \Phi - a_9 \Pi_C \Phi 
\\
& \quad + \sum_{k=0}^3 \Big\{ \Pi_E^\perp \mu_2
	\Big(\b\k_{yy} \pa_y^{-k} 
	+ 2 \b\k_{y} \pa_y^{-k+1}  
	+ \b\k \pa_y^{-k+2} \Big) 
+ a_6 \pepe 
	\Big(\b\k_{y} \pa_y^{-k} 
	+ \b\k \pa_y^{-k+1} \Big)
\\
& \quad 
+ a_8 \pepe \b\k \pa_y^{-k} 
- 
\big(\mu_2 \b\k \pa_y^{-k+2} 
+ \mu_0 \b\k \pa_y^{-k} 
+ \mu_{-2} \b\k \pa_y^{-k-2}\big) \pepe \Big\}
\\
& \quad 
+ \Big( - \mH ( 2\mu_2 \a\uno_y + a_6 \a\uno) - (a_7 - \mu_1) \a\uno \Big) \pepe
+ 
\Big( ( 2\mu_2 \b\uno_y + a_6 \b\uno) - \mH (a_7 - \mu_1) \b\uno \Big) \pepe
\\
& \quad
+ \sum_{k=0}^3 \Big\{ 
[a_6,\mH] \Big( \a\k_y \pa_y^{-k} + \a\k \pa_y^{-k+1} \Big)
+ [a_7,\mH] \Big( \b\k_y \pa_y^{-k} + \b\k \pa_y^{-k+1} \Big)
+ [a_8,\mH] \a\k \pa_y^{-k}
\\ 
& \quad 
+ [a_9,\mH] \b\k \pa_y^{-k}  \Big\} 
+ \sum_{k=0}^3 [\b\k - \a\k, \mH] 
\big( \mu_2 \pa_y^{-k+2} + \mu_0 \pa_y^{-k} + \mu_{-2} \pa_y^{-k-2} \big)
\\
& \quad 
+ \Big( \om \a\tre_\t 
- \mu_2 \b\tre_{yy} 
- a_6 \b\tre_y
+ a_7 \a\tre_y 
- (a_8-\mu_0) \b\tre
+ a_9 \a\tre
+ \mu_{-2} \sum_{k=1}^3 \b\k \pa_y^{-k-2} \Big)  \, \pa_y^{-3}
\\
& \quad 
+ \mH \Big( \om \b\tre_\t 
+ \mu_2 \a\tre_{yy} 
+ a_6 \a\tre_y
+ a_7 \b\tre_y 
+ (a_8-\mu_0) \a\tre
+ a_9 \b\tre
- \mu_{-2} \sum_{k=1}^3 \a\k \pa_y^{-k-2} \Big) \, \pa_y^{-3}.
\end{align*}

\begin{proof}[\emph{\textbf{Proof of Proposition \ref{prop:Phi mR}}}]
From the estimates for $\mu_2,\mu_1,a_6,a_7,a_8,a_9$ of Proposition \ref{prop:Psi mM mR3} 
and formulae \eqref{formula Re ph},\eqref{formula Im ph} 
for $\ph$ it follows that 
\begin{align}
\label{stima ph}
\| \Re(\ph) \|_s + \| \Im(\ph) \|_s 
& \leq \e^2 C(s,K) (1+\| u \|_{s+c}), 
\\
\label{stima ph der u}
\| \pa_u \Re(\ph)[h] \|_s + \| \pa_u \Im(\ph)[h] \|_s 
& \leq \e^4 C(s,K) (\| h \|_{s+c} + \| u \|_{s+c} \| h \|_4), 
\\
\label{stima ph der epsilon}
\| \pa_\e \Re(\ph) \|_s + \| \pa_\e \Im(\ph) \|_s 
& \leq \e C(s,K) (1 + \| u \|_{s+c}),
\end{align} 
for $2 \leq s \leq r-1$, where $c = 6$
(in this proof we use \eqref{asymmetric tame product} to estimate any product).
As a consequence, by Lemma \ref{lemma:composition of functions, Moser} and \eqref{formula alfa beta zero}, 
$\a\zero - 1$ and $\b\zero$ and their derivatives satisfy the same estimates \eqref{stima ph},
\eqref{stima ph der u},
\eqref{stima ph der epsilon}, with $c=6$.

$g\zero$ is given by \eqref{S f zero}, therefore its real and imaginary part satisfy \eqref{stima ph}, \eqref{stima ph der u}, \eqref{stima ph der epsilon}, with $c=8$, for $2 \leq s \leq r-3$. The same for $\eta\uno$ because of \eqref{eta uno E}, \eqref{eta uno T}.
By formulae \eqref{g uno}, \eqref{eta due E}, \eqref{eta due T},
\eqref{g due}, \eqref{eta tre E}, the same holds 
for $g\uno, \eta\due$, with $c=10$, $2 \leq s \leq r-5$, 
and for $g\due, \eta\tre$, with $c=12$, $2 \leq s \leq r-7$. 
Since $f\k = \eta\k f\zero$, $k=1,2,3$, all coefficients 
$\a\k, \b\k$, $k=1,2,3$ and their derivatives satisfy \eqref{stima ph},
\eqref{stima ph der u}, \eqref{stima ph der epsilon}, with $c=12$, for all $2 \leq s \leq r-7$. By \eqref{asymmetric tame product},
\[
\| (\Phi-I)f \|_s \leq C  \| \,\text{coeff}\, \|_2 \| f \|_s 
+ C(s) \| \,\text{coeff}\, \|_s \| f \|_2,
\]
where `coeff' are $(\a\zero-1), \b\zero, \a\k, \b\k$, $k=1,2,3$, and $C$ does not depend on $s$. 
Therefore 
\begin{equation*} 
\| (\Phi-I)f \|_s \leq \e^2 C(K) \| f \|_s 
+ \e^2 C(s,K) (1+\| u \|_{s+12}) \| f \|_2.
\end{equation*}
The estimates for $\pa_u \Phi[h]$ and $\pa_\e \Phi$ are obtained in the same way, using the estimates for the derivatives of the coefficients.
Similarly, \eqref{Phi-I for bif 1},\eqref{Phi-I for bif 2} follow because
$\pa_\t (\Phi-I) f = (\Phi-I)\pa_\t f + \Phi_\t f$, where $\Phi_\t$ is the operator of the same type as $\Phi$ that has coefficients $\a\k_\t,\b\k_\t$ instead of $\a\k, \b\k$, $k=0,\ldots,3$.  
Since $\| \pp f \|_s \leq \| f \|_s$, all the estimate for $\Phi-I$ also hold for $\tilde\Phi - \pp = \pp (\Phi - I)\pp$.
\eqref{stima Phi-I},\eqref{stima Phi der u} and \eqref{stima Phi der epsilon} also hold for $\tilde\Phi\inv$ by Lemma \ref{lemma:tame estimates for operators}.

To prove \eqref{S property} for $\tilde\Phi\inv \tilde\mM\inv \tilde\Psi\inv$, write
\begin{equation*} 
\tilde\Phi\inv \tilde\mM\inv \tilde\Psi\inv 
= I + S, \quad 
S:= (\tilde\Psi\inv - I) 
+ (\tilde\mM\inv - I) \tilde\Psi\inv 
+ (\tilde\Phi\inv - I) \tilde\mM\inv \tilde\Psi\inv,
\end{equation*}
then apply \eqref{stima Psi-I}, \eqref{stima Psi}, \eqref{stima mM-I} and \eqref{stima Phi-I}. Similarly for the other operators.

The estimates for $\mu_0, \mu_{-2}$ and their derivatives follow from formulae \eqref{nu nu' 0}, \eqref{nu nu' -2} and the estimates for $\mu_2,a_6,a_7,a_8,a_9,\eta\due, g\zero$.

Now study the rest $\mR$. 
By \eqref{stima mR123}, for $2 \leq s \leq r-6$,
\begin{equation} \label{mR3 der y3}
\| \mR_3 \pa_y^m f \|_s 
\leq \e^3 C(s,K) ( \|f\|_s + \| f \|_0 \|u\|_{s+10}), 
\quad  0 \leq m \leq 3.
\end{equation}
By definition, $\Phi$ is a combination of multiplications and $\mH, \pa_y\inv$. Every $\pa_y$ can be moved from the right to the left of any multiplication operator with elementary calculus: $[a,\pa_y] = - a_y$, namely, for every $a,f$, 
\[
a \pa_y f = \pa_y (af) - a_y f, \quad 
a \pa_y^2 f = \pa_y^2 (af) -2 \pa_y( a_y f) + a_{yy} f, \quad
a \pa_y^3 f = \pa_y^3 (af) -3 \pa_y^2( a_y h) + 3 \pa_y (a_{yy} f) 
- a_{yyy} f.
\]
Recall that the coefficients $\a\k, \b\k$ satisfy \eqref{stima ph},
\eqref{stima ph der u}, \eqref{stima ph der epsilon}, with $c=12$, $2 \leq s \leq r-7$. 
Moving $\pa_y^m$ to the left of $\Phi$, $m=0,1,2,3$, 
the coefficients $\a\k, \b\k$ are subject to up to 3 derivatives in $y$. So applying \eqref{mR3 der y3} gives
\[
\| \mR_3 \pp \Phi \pa_y^m f \|_s 
\leq \e^5 C(s,K) (\| f \|_s + \| u \|_{s+10} \| f \|_2), \quad 
0 \leq m \leq 3, \quad 
2 \leq s \leq r-10.
\]
Each term $R_{(a)}$ of type $(a)$ containing $[b,\mH]$ can be estimated by Lemma \ref{lemma:commutators}$(i)$, whence
\[
\| R_{(a)} \pa_y^m f \|_s \leq \e^2 C(s,K) (\|f\|_s + \| u \|_{s+17} \| f \|_2), \quad 0 \leq m \leq 3, \quad 2 \leq s \leq r-12,
\]
and the same inequality also holds for each term $R_{(b)}$ of type $(b)$ that contains $\pepe$. Thus it holds for $\| \mR_4 \pa_y^m f \|_s$.
Since $\mR := \tilde\Phi\inv \pp \mR_4$ by \eqref{DR}, 
the estimate for $\mR \pa_y^m$ follows from \eqref{stima Phi-I}.
\end{proof}

\begin{proof}[\emph{\textbf{Proof of \eqref{rn}}}]

(The meaning of $A,B,a,b,c$ in the following proof is independent on the rest of the paper).
By \eqref{conj completa}, 
\begin{align}
F(u_n) + F'(u_n) h_{n+1} 
& = F(u_n) + P_\e\inv \tilde\Psi_n \tilde\mM_n \tilde\Phi_n \tilde\mL_4(u_n) \tilde\Phi_n\inv \tilde\Psi_n\inv h_{n+1} 
\notag 
\\ & 
= P_\e\inv \tilde\Psi_n \tilde\mM_n \tilde\Phi_n 
\big\{ 
\tilde\Phi_n\inv \tilde\mM_n\inv \tilde\Psi_n\inv P_\e F(u_n) 
+ \tilde\mL_4(u_n) \tilde\Phi_n\inv \tilde\Psi_n\inv h_{n+1} \big\}.
\label{parentesi per rn}
\end{align}
Let $p = \{ \ldots \}$ be the quantity in parentheses in \eqref{parentesi per rn}.
Let 
\[
c := \tilde\Phi_n\inv \tilde\mM_n\inv \tilde\Psi_n\inv P_\e F(u_n) 
= \Pi_{n+1} c + \Pi_{n+1}^\perp\,c,
\]
\[
\tilde\mL_4(u_n) = A + B, \quad 
A := \Pi_{n+1} \tilde\mL_4(u_n) \Pi_{n+1}, \quad  
B := \Pi_{n+1}^\perp\, \tilde\mL_4(u_n) \Pi_{n+1} 
+ \tilde\mL_4(u_n) \Pi_{n+1}^\perp.
\]
With these abbreviations, by the definition \eqref{sequence un}
$h_{n+1} = - \Pi_{n+1} \tilde\Psi_n \tilde\Phi_n A\inv \Pi_{n+1} c$, whence
\begin{equation*} 
\tilde\Phi_n\inv \tilde\Psi_n\inv h_{n+1} = a + b,
\quad 
a := -A\inv \Pi_{n+1} c, 
\quad 
b:= \tilde\Phi_n\inv \tilde\Psi_n\inv \Pi_{n+1}^\perp \, \tilde\Psi_n \tilde\Phi_n A\inv \Pi_{n+1} c.
\end{equation*}
Now $p = c + (A+B)(a+b)$, and $A a + \Pi_{n+1} c = 0$. Therefore 
\[
p = \Pi_{n+1}^\perp\,c + Ba + (A+B)b.
\]
$\Pi_{n+1}^\perp\, \tilde\mL_4(u_n) \Pi_{n+1} 
= \Pi_{n+1}^\perp\, \tilde\mR \Pi_{n+1}$ 
because $\tilde\mL_4(u_n) = \tilde\mD + \tilde\mR$ and $\tilde\mD$ is diagonal. 
Moreover $\tilde\mL_4(u_n) \Pi_{n+1}^\perp a = 0$ because $a \in Z_n$. 
Thus \eqref{rn} follows.
\end{proof}

\begin{proof}[\emph{\textbf{Proof of Lemma \ref{lemma:tame per F}}}]

$(i)$ Lemma \eqref{lemma:tame per F} simply follows from Lemma \ref{lemma:composition of functions, Moser}. 
In particular, $\bar v_2(\e)$ satisfies \eqref{linearized unperturbed bif eq}. By Proposition \ref{prop:bif}, 
$(\Pi_V A \Pi_V)$ : $V \cap X \to V \cap Y$, 
$h \mapsto 3 \pa_t h + \Pi_V \pa_x(3 \bar v_1^2 h)$ 
is invertible, with 
\begin{equation} \label{stima VAVinv}
\| (\Pi_V A \Pi_V)\inv h \|_s \leq C \| h \|_{s-1} \quad \forall h \in V \cap Y, \quad s \geq 1,
\end{equation}
where $C$ depends only on the set $\mK$, like in \eqref{beef 1}. 
By \eqref{g order 4} and \eqref{Taylor composition}, 
$\| \mN_4 (h) \|_s \leq C(s) \| h \|_4^3 \| h \|_{s+2}$ for $0 \leq s \leq r$. Hence 
\begin{equation} \label{v2<v1}
\| \bar v_2(\e) \|_s \leq C \e^{-4} \| \mN_4(\e \bar v_1) \|_{s-1} 
\leq C(s) \| \bar v_1 \|_4^3 \| \bar v_1 \|_{s+1} 
= C'(s)
\end{equation}
where $C'(s)$ depends on $s$ and $\| \bar v_1 \|_{s+1}$.
\eqref{v2<v1} for $s=4$ implies that 
$\e \| \bar v_1 \|_4 + \e^2 \| \bar v_2 \|_4 < \d_0$  
for all $\e < \e_0$, for some $\e_0$ depending on $\| \bar v_1 \|_5$.

To complete the proof of \eqref{stima v2}, differentiate \eqref{linearized unperturbed bif eq} with respect to $\e$, then use \eqref{stima VAVinv}, 
\[
\| \pa_\e \bar v_2(\e) \|_s 
\leq C (4 \e^{-5} \| \Pi_V \mN_4(\e \bar v_1) \|_{s-1} 
+ \e^{-4} \| \Pi_V \mN_4'(\e \bar v_1)[\bar v_1] \|_{s-1} )
\leq \e^{-1} C(s).
\]
\eqref{F(u0)} follows from formula \eqref{mF bar v1 bar v2} and estimates \eqref{stima v2}.
To prove $(ii)$, observe that 
\[
Q(u,h,\e) = \e^{-2} P_\e\inv \Big( 
\pa_x \{ 3 (\e \bar v_1 + \e^2 u)(\e^2 h)^2 + (\e^2 h)^3 \}
+ \mN_4(\e \bar v_1 + \e^2 u + \e^2 h) 
- \mN_4(\e \bar v_1 + \e^2 u) 
- \mN_4'(\e \bar v_1 + \e^2 u)[\e^2 h] \Big),
\]
then apply \eqref{Taylor composition} to $\mN_4$.

$(iii)$ follows from (\ref{formula F}) by the usual tame estimates.
\end{proof}

\vspace{15mm}

Pietro Baldi

Dipartimento di Matematica e Applicazioni ``R. Caccioppoli''

Universit\`a di Napoli Federico II

Via Cintia, Monte S. Angelo 

80126 Napoli 

Italy

\medskip

E-mail: pietro.baldi@unina.it

\end{document}